\theoremstyle{definition}
\newtheorem{defn}{Definition}[section]
\newtheorem{eg}[defn]{Example}
\theoremstyle{remark}
\newtheorem{rmk}[defn]{Remark}
\theoremstyle{plain}
\newtheorem{cor}[defn]{Corollary}
\newtheorem{lem}[defn]{Lemma}
\newtheorem{thm}[defn]{Theorem}
\numberwithin{equation}{section}
\newcommand{\DLF}{DLF}
\newcommand{\DF}{\mathcal{E}}
\newcommand{\Hil}{\mathcal{H}}
\newcommand{\Mag}{\mathcal{M}}
\newcommand{\domDF}{\mathcal{F}}
\newcommand{\U}{\mathcal{U}}
\newcommand{\K}{\mathcal{K}}
\renewcommand{\L}{\mathcal{L}}
\newcommand{\R}{\mathcal{R}}
\DeclareMathOperator{\mult}{mult}
\DeclareMathOperator{\dom}{dom}
\DeclareMathOperator{\Tr}{Tr}
\title{Spectra of Magnetic Operators on the Diamond Lattice Fractal}
\author[Brzoska, Coffey, Rooney, Loew, Rogers]{Antoni Brzoska, Aubrey Coffey, Madeline Rooney,\\
 Stephen Loew, Luke~G. Rogers}
\thanks{Authors supported in part by the National Science Foundation through grant DMS-1262929.}
\subjclass[2000]{Primary 28A80, Secondary 60J35, 31E05, 47A07, 81Q10, 81Q35.}
\keywords{Analysis on Fractals, Sierpinski Gasket, Magnetic form, Schr\"{o}dinger operator}
\begin{document}

\begin{abstract}
We adapt the well-known spectral decimation technique for computing spectra of Laplacians on certain symmetric self-similar sets to the case of  magnetic  Schr\"odinger operators and work through this method completely for the diamond lattice fractal. This connects results of physicists from the 1980's, who used similar techniques to compute spectra of sequences of magnetic operators on graph approximations to fractals but did not verify existence of a limiting fractal operator, to recent work describing magnetic operators on fractals via functional analytic techniques. 
\end{abstract}

\maketitle

\section{Introduction}

This paper is motivated by the problem of understanding the properties of an electron confined to a fractal set in a magnetic field via the one-dimensional Peierls model.  Such problems have been extensively investigated in the physics literature using numerical techniques and renormalization group methods~\cite{Alexanderetal,Alexander,Rammal,AO,Rammal2,Ghez,Bel}.  Our goal is to give a rigorous mathematical model for this problem on a class of self-similar fractals, and to give a detailed analysis in the specific case of  the diamond lattice fractal (\DLF).  For simplicity of notation we work on the \DLF\ throughout the paper, though much of our approach is more general.  Specifically, we use various developments in fractal analysis~\cite{CS,IRT,ACSY,CSetal,HT,hinzetal,HR,HKMRS} to define a Schr\"{o}dinger operator based on a Laplacian intrinsic to the fractal.  In Section~\ref{sec:approx} we show that this operator can be approximated in a natural way using the self-similar structure of the fractal; this approximation is applicable more generally to resistance forms on self-similar fractals, see also~\cite{PostSimmer}. In particular it generalizes the technique introduced in~\cite{HKMRS} to calculate spectra for magnetic operators corresponding to fields that are locally exact in the setting of the Sierpinski Gasket.  We then show, in Section~\ref{sec:specdec} that the structure of the \DLF\ is such that the spectrum of the operator can be computed using a spectral decimation method~\cite{RammalToulouse,FukushimaShima,MT}. This type of method has previously been used to consider magnetic Schr\"{o}dinger operators on an infinite Sierpinski lattice, for which the numerically-obtained spectral data has good agreement with experimental results~\cite{Ghez}, however the existence of a limiting operator was not established in this setting until the recent work of Chen and Gyo~\cite{chen-guo}.  We then specialize to the case of a magnetic field that is uniform in the sense that the flux through a cell depends only on the scale of the cell (Section~\ref{sec:scalefield}) and conclude with some numerical results in this setting in Section~\ref{sec:numerics}.

\section{The Diamond Lattice Fractal}\label{sec:analysis}

\begin{figure}[htb]
\centering
\includegraphics[width=12cm]{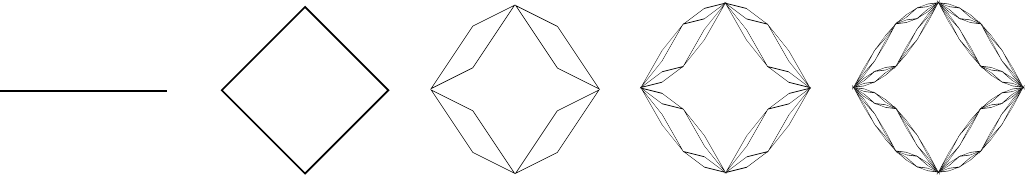}
\caption{Construction of the Diamond Lattice Fractal}\label{diamond}
\end{figure}

The Diamond Lattice Fractal (DLF) or Diamond Hierarchical Fractal is a particular case of the Berker lattice construction~\cite{BerkerOstlund}, and has been extensively studied in statistical physics (see, for example, \cite{LangloisTremblaySouthern, Alexanderetal, Derridaetal, Collet})  because the Migdal-Kadanoff renormalization is trivially exact in this setting. Mathematically rigorous versions of some statistical physics models are also understood, for example many fundamental results about percolation are proved in~\cite{HamblyKumagaiDiamond}.  We may realize it as a self-similar set $X\subset\mathbb{R}^2$ by introducing a scaling factor $s\leq1/8$ and maps
\begin{equation*}
	F_{j} \begin{bmatrix} x_1 \\x_2\end{bmatrix} =\frac{1}{2} \begin{bmatrix} 1&0\\(-1)^j & 4s\end{bmatrix} \begin{bmatrix} x_1 \\x_2\end{bmatrix}+ \frac{1}{\sqrt{2}} \begin{bmatrix} \cos \bigl( (2j-1)\pi/4\bigr) \\ \sin \bigl( (2j-1)\pi/4\bigr) \end{bmatrix} \quad \text{j=1,2,3,4,}
	\end{equation*}
and requiring that $X$ be the unique non-empty compact set so $X=\cup_{j=1}^{4}F_j(X)$.

We construct graphs that approximate $X$ in the manner  illustrated in Figure~\ref{diamond}.  Take $V_0=\{(-1,0),(1,0)\}$ to be the endpoints of the interval shown, and inductively let the scale~$m$ vertices be $V_m=\cup_{j=1}^4 F_j(V_{m-1})$.   For a word $w=w_1w_2\dotsm w_m\in\{1,2,3,4\}^{m}$ denote its length $m$ by $|w|$ and define $F_w=F_{w_1}\circ\dotsm\circ F_{w_m}$.  The edges of the scale $m$ graph are the images of the interval $[-1,1]\times\{0\}$ under the maps $F_w$ with $|w|=m$.  We write $x\sim_m y$ if there is $w$ with $|w|=m$ so $x=F_w(-1,0)$, $y=F_w(1,0)$.

Note that other treatments of the \DLF\ have not always defined it using the specific self-similarities $F_j$.  For much of our work this is of no significance because our analytic structure will depend on the graph structure of our approximations and associated electrical networks, for which the precise embedding into $\mathbb{R}^2$ is not relevant.  However we will later consider the notion of a uniform magnetic field through $X$, in which case it will be important that all cells of a given scale have the same size so that the flux, which is proportional to the area of the cell, depends only on its scale.  In particular we notice that the maps $F_j$ scale area by a factor of~$s$, so that the area enclosed by a scale $m$ cell is $2s^{m-1}$.

\subsection*{Resistance form and Laplacian on \DLF}

The crucial feature that permits us to do analysis on the \DLF\ is the existence of an irreducible local regular Dirichlet form $\DF$ and an associated non-positive definite, self-adjoint Laplacian operator $\Delta$ for which $\DF(f,g)=\int (-\Delta f)\bar{g}\,d\mu$, where $\mu$ is Hausdorff measure.  The existence and fundamental properties of such operators on fractals emerged in the probability and functional analysis literature, intially as a  mathematical treatment of physics models with anomalous diffusive behavior~\cite{Kusuoka85,BarlowPerkins,Kig89}, and subsequently as a subject of interest in its own right.  The monographs of Barlow~\cite{Barlowbook} and Kigami~\cite{Kigamibook} and the references therein give two standard approaches, but since the \DLF\ is only finitely ramified rather than post-critically finite we rely here upon Teplyaev's extension~\cite{Tep08} of Kigami's method. We note that the results given here about the Dirichlet form and Laplacian are not new: a direct approach that includes some estimates of the heat kernel is given in Section~4 of~\cite{HamblyKumagaiDiamond}. We also note that the harmonic structure on the \DLF\ is not regular in the sense explained in Chapter~3 of~\cite{Kigamibook}, so in particular the resistance metric completion of $V_\ast=\cup V_m$ is a strict subset of $X$. For this reason we will work with the Euclidean rather than the resistance metric. Though the following results are now standard we recall some salient features of the construction in order to fix notation.

Both the Dirichlet form and the Laplacian on the \DLF\ may be realized as a limits of corresponding objects on the the finite graph approximations in Figure~\ref{diamond}. Recall that the vertices of the scale~$m$ approximation are denoted $V_m$ and we write $x\sim_m y$ if there is an edge between $x$ and $y$. Define a sequence of graph Dirichlet forms and graph Laplacians by
\begin{gather}
	\DF_m(f) = \frac{1}{2} \sum_{x\in V_m}\sum_{y\sim_m x}  \bigl|f(x)-f(y)\bigr|^2 \label{eqn:defnofDFm}\\
	\Delta_m f(x) =  \frac{1}{\deg_m(x)} \sum_{y\sim_m x}  \bigl( f(y)-f(x)\bigr) \quad x\in V_m\setminus V_0 \notag
	\end{gather}
where $\deg_m(x)$ is the number of edges incident at $x$ in the scale $m$ graph.  Also define $\DF_m(f,g)$ by polarization and observe that if either $\sum_{y\sim_m x} \bigl(f(y)-f(x)\bigr)=0$ for $x\in V_0$  or $g=0$ on $V_0$  then
\begin{equation}\label{eqn:polarizedDF}
	 \DF_m(f,g) = \langle -\Delta_m f, g \rangle_{l^{2}(\mu_m)}
	\end{equation}
where $\mu_m$ is the measure on $V_m$ with mass $\deg_m(x)$ at $x\in V_m$, so $\mu_m(V_m)=2\cdot4^m$.

If $f$ is prescribed on $V_{m-1}$ then the extension to $V_m\setminus V_{m-1}$ that  minimizes~\eqref{eqn:defnofDFm} is obtained by setting $f$ on $F_w(V_1\setminus V_0)$ to be the average of the values in $f\circ F_w(V_0)$ for each word with $|w|=m-1$.  One then readily verifies that $\DF_m(f)=\DF_{m-1}(f)$, whence $\DF_m(f)$ is increasing in $m$.  When $\DF_m(f)$ has finite limit we write $f\in\domDF$ and call the limit $\DF(f)$.  If $\DF_m(f)$ is constant for all $m$ we call $f$ harmonic, and if it is constant for $m\geq n$ we call it harmonic at scale~$m$.  Functions in $\domDF$ can be approximated uniformly by functions harmonic at scale~$m$.  Note that $\DF_m(f,g)$ is independent of $m\geq n$ if $f$ is harmonic at scale~$n$.

Let $\mu$ be Hausdorff measure on $X$, scaled so $\mu(X)=2$.  By results of~\cite{Kigamibook,Tep08} the form $\DF$ is an irreducible local regular Dirichlet form on $L^2(\mu)$, so there is a self-adjoint Laplacian $\Delta$ for which 
\begin{equation}\label{eqn:defnofLap}
	\DF(f,g)=\int_X(-\Delta f)\bar{g}\,d\mu \quad\text{ for all } g\in\domDF_0,
	\end{equation}
where $\domDF_0=\{f\in\domDF:f|_{V_0}=0\}$ and we write $f\in\dom(\Delta)$ if there is a continuous $\Delta f$ for which~\eqref{eqn:defnofLap} is valid.  This is the Dirichlet Laplacian.

Just as $\DF_m(f)\to\DF(f)$ we have $4^m \Delta_m f\to\Delta f$ for $f\in\dom(\Delta)$.  To see this,
let $h_m^x$  be the scale~$m$ harmonic function that is $1$ at $x$ and $0$ at all other points of $V_m$, so $\int_{F_w(X)}h_m^x\,d\mu$ is independent of $w$ if $x\in F_w(X)$ and zero otherwise.  Thus $\int h_m^x\,d\mu=4^{-m}\deg_m(x)$.  One may uniformly approximate any continuous $h$ by the scale~$n$ harmonic functions $\sum_{x\in V_n} h(x)h_m^x$, from which
\begin{equation*}
	\int h\,d\mu 
	= \lim_m \int \sum_{x\in V_m} h(x)h_m^x \, d\mu
	= \lim_m  4^{-m} \sum_{x\in V_m} h(x)\deg_m(x)
	=  4^{-m} \int h\, d\mu_m
	\end{equation*}
and therefore $4^{-m}\mu_m$ converges weakly to $\mu$.  Then use~\eqref{eqn:polarizedDF} to see that when $4^m\Delta_m f$ converges uniformly on $X$ to $f'$ then $f'=\Delta f$ because
\begin{align*}
	\langle -\Delta f, g\rangle_{L^{2}(\mu)}
	=\DF(f,g)=\lim_m\DF_m(f,g)
	&= \lim_m \langle -\Delta_m f, g \rangle_{l^{2}(\mu_m)}\\
	&=\lim_m \int (-4^m\Delta_m f)\bar{g} \, (4^m d\mu_m)
	=\langle f',g\rangle_{L^{2}(\mu)}.
	\end{align*}

\subsection*{Magnetic Form and Magnetic Operator on the \DLF}

One may define differential forms on the \DLF\ and similar spaces using techniques from~\cite{IRT,ACSY,CSetal}.  We follow the construction in~\cite{IRT}, which provides a Hilbert space $\Hil$ of $1$-forms which is a module over $\mathcal{F}$ with $\|fa\|_\Hil\leq \|f\|_\infty\|a\|_\Hil$ if $f\in\domDF$, and  a derivation $\partial:\mathcal{F}\to\Hil$ such that $\|\partial f\|_{\Hil}^2=\DF(f)$ and the image is the space of exact forms.  A crucial result for our purposes is that one may define a magnetic form and self-adjoint magnetic operator in this setting.  We prove it using a variant of an argument from~\cite{HR}, employing the fact that $\Hil$ respects the cellular structure of $X$ and thus $\|a\|_\Hil^2=\sum_{|w|=n}\|a\mathds{1}_w\|_\Hil^2$, where $\mathds{1}_w$ is the characteristic function of $F_w(X)$.

\begin{thm}\label{thm:formisclosed}
For a real-valued $1$-form $a\in\Hil$  the quadratic form 
\begin{equation*}
	\DF^a(f)=\| (\partial+ia)f\|_{\Hil}^2
	\end{equation*}
 with domain $\mathcal{F}$ is closed on $L^2(\mu)$.  Thus there is a non-positive definite, self-adjoint, magnetic operator $\Mag^a_N$ satisfying
\begin{equation}\label{eqn:defnofMaga}
	\DF^a(f,g) = \langle -\Mag^a_N f,g\rangle_{L^2(\mu)}
	\end{equation}
for all $g\in\domDF$.  Moreover $\Mag^a_N$ has compact resolvent, hence its spectrum is a sequence $0\leq \lambda_1\leq\lambda_2\leq\dotsc$ accumulating only at $\infty$.
\end{thm}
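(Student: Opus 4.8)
The plan is to show that the magnetic form $\DF^a$ has a form norm equivalent to that of the ordinary energy $\DF$, so that closedness of $\DF^a$ follows from the known closedness of $\DF$; the self-adjoint operator and the spectral statements then come from the standard representation theorem for closed forms together with a compact embedding. Throughout write $\|f\|_{\DF}^2=\DF(f)+\|f\|_{L^2(\mu)}^2$ and $\|f\|_{\DF^a}^2=\DF^a(f)+\|f\|_{L^2(\mu)}^2$ for the two form norms on $\domDF$. Since $\partial f$ is the exact form with $\|\partial f\|_\Hil^2=\DF(f)$, the only difference between the two forms is the module term $fa$ and a cross term, so the triangle inequality in $\Hil$ together with the module bound gives
\begin{equation*}
	\bigl|\,\DF^a(f)^{1/2}-\DF(f)^{1/2}\,\bigr|\leq \|fa\|_\Hil\leq \|f\|_\infty\,\|a\|_\Hil .
\end{equation*}
Thus everything reduces to controlling $\|f\|_\infty$ by the energy.

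The key estimate, and the step I expect to be the main obstacle, is an interpolation inequality of the form
\begin{equation*}
	\|f\|_\infty\leq \varepsilon\,\DF(f)^{1/2}+C_\varepsilon\,\|f\|_{L^2(\mu)}\qquad\text{for every }\varepsilon>0 .
\end{equation*}
To obtain it I would first record that functions of finite energy are continuous with oscillation controlled by energy: the uniform approximation by functions harmonic at scale $m$, combined with the resistance estimate $|f(x)-f(y)|^2\le R(x,y)\,\DF(f)$ on the graph approximations, shows that a bound on $\DF(f)$ forces equicontinuity in the Euclidean metric on the compact set $X$. By Arzel\`a--Ascoli the unit ball of $(\domDF,\|\cdot\|_{\DF})$ is then precompact in $C(X)$, so $\domDF$ embeds compactly into $C(X)$ and hence, since $\mu$ is finite, compactly into $L^2(\mu)$. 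Ehrling's lemma applied to $\domDF\hookrightarrow C(X)\hookrightarrow L^2(\mu)$ yields the displayed inequality. Care is needed here precisely because the harmonic structure is not regular, so one must run the oscillation estimate through the graph resistances and the Euclidean geometry rather than through a resistance-metric completion of $X$.

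With the interpolation inequality in hand the form comparison closes cleanly. Inserting it into the bound above and choosing $\varepsilon$ with $\varepsilon\|a\|_\Hil\le\tfrac{1}{2}$ absorbs the energy term into the left-hand side, giving constants $c,C>0$ with $c\,\|f\|_{\DF}^2\le \|f\|_{\DF^a}^2\le C\,\|f\|_{\DF}^2$ for all $f\in\domDF$; equivalently, the perturbation is infinitesimally form-bounded with respect to $\DF$. Since $\DF$ is closed, i.e.\ $\domDF$ is complete under $\|\cdot\|_{\DF}$, the equivalence shows $(\domDF,\|\cdot\|_{\DF^a})$ is complete, so $\DF^a$ is closed (this is the KLMN theorem in the present setting). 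Because $\DF^a(f)=\|(\partial+ia)f\|_\Hil^2\ge 0$, the form is nonnegative, and the representation theorem for closed nonnegative forms produces a unique nonnegative self-adjoint operator $-\Mag^a_N$ satisfying \eqref{eqn:defnofMaga}, whence $\Mag^a_N$ is nonpositive and self-adjoint.

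Finally, the compact resolvent follows from the compact embedding established above: the norm equivalence transfers compactness of $\domDF\hookrightarrow L^2(\mu)$ from the $\|\cdot\|_{\DF}$ norm to the $\|\cdot\|_{\DF^a}$ norm, and a self-adjoint operator whose form domain embeds compactly into the ambient Hilbert space has compact resolvent. Hence the spectrum of $\Mag^a_N$ is discrete, and by nonnegativity it is a sequence $0\le\lambda_1\le\lambda_2\le\cdots$ accumulating only at $\infty$.
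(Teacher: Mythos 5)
Your reduction of the theorem to an interpolation inequality of the form $\|f\|_\infty\le\varepsilon\,\DF(f)^{1/2}+C_\varepsilon\|f\|_{L^2(\mu)}$ is in the spirit of the paper's argument, but your route to that inequality fails, and in fact the inequality itself is false on the \DLF\ with a uniform constant $C_\varepsilon$. The failure point is exactly the one the paper warns about in Section~\ref{sec:analysis}: the harmonic structure is not regular (the energy renormalization factor is $1$, so $\DF_m(f)=\DF_{m-1}(f)$ for harmonic extensions), and consequently the effective resistance between the two boundary vertices of a scale-$n$ cell does \emph{not} tend to zero as $n\to\infty$, even though their Euclidean distance does. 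Hence the resistance estimate $|f(x)-f(y)|^2\le R(x,y)\DF(f)$ gives no Euclidean modulus of continuity, bounded-energy families are not equicontinuous on $X$, and $\domDF$ does not embed compactly (or even as a subset) into $C(X)$; the Ehrling chain $\domDF\hookrightarrow C(X)\hookrightarrow L^2(\mu)$ is therefore unavailable. A concrete witness is the family of tent functions $h_n^{x_n}$ with $x_n\in V_n\setminus V_{n-1}$: these satisfy $\|h_n^{x_n}\|_\infty=1$, $\DF(h_n^{x_n})=\deg_n(x_n)=2$, and $\|h_n^{x_n}\|_{L^2(\mu)}^2\le 2\cdot 4^{-n}$, so they have no uniformly convergent subsequence and violate your inequality for every $\varepsilon<1/\sqrt{2}$ once $n$ is large. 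Since the theorem is stated for arbitrary $a\in\Hil$, you genuinely need arbitrarily small $\varepsilon$, so this is fatal rather than a removable technicality.

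What saves the paper is that its bound~\eqref{eqn:forformbound} is not derived from a uniform global $L^\infty$ interpolation. Instead it works cell by cell: for $x$ in an $n$-cell $F_w(X)$ it compares $f(x)$ to the cell average $f_w$, using the resistance estimate only \emph{within} the cell (where the resistance diameter is merely bounded, not small), to get $|f(x)|^2\le 2\DF_w(f)+2(|f|^2)_w$; the coefficient of $\DF(f)$ is then made small by choosing $n$ so that each cellwise energy $\DF_w(f)$ is a small fraction of the total, while $(|f|^2)_w\le 4^n\|f\|_{L^2(\mu)}^2$ because $\mu(F_w(X))=4^{-n}$. Note that on the family $h_n^{x_n}$ above this localized bound is harmless because $4^n\|h_n^{x_n}\|_{L^2}^2$ stays bounded. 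Your compact-resolvent step inherits the same problem: the compact embedding of $(\domDF,\|\cdot\|_{\DF})$ into $L^2(\mu)$ is true, but it must be obtained from estimates of this localized type (as in Theorem~4.3 of the cited Hinz--Rogers paper), not from precompactness in $C(X)$. To repair your write-up you should abandon the Arzel\`a--Ascoli/Ehrling mechanism and run the averaging-over-small-cells argument, or else work on the resistance completion of $V_*$ --- but the latter is a strict subset of $X$ and does not carry $\mu$, which is precisely why the paper avoids it.
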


\begin{proof}
According to  Lemma~4.2 in~\cite{HR} it suffices that there is $C=C(a)>0$ such that we  have a bound of the form
\begin{equation}\label{eqn:forformbound}
	\bigl\| fa\bigr\|_\Hil^2 \leq \frac{1}{2}\DF(f) + C \|f\|_{L^2(X,\mu)}^2
	\end{equation}
as this implies closedness of the quadratic form by the KLMN theorem and applicability of the Rellich criterion for the resolvent.

Observe that on the \DLF,  for $x,y$ in the cell $F_w(X)$ we have $|f(x)-f(y)|^2\leq \DF_w(f)$.  Write $f_w$ for the average (with respect to $\mu$) of $f$ over $F_w(X)$, so
\begin{gather}
	|f(x)-f_w|\leq \mu(F_w(X))^{-1}\int_{F_w(X)}|f(x)-f(y)|\,d\mu \leq \DF_w(f)^{1/2}, \quad\text{and hence} \notag\\
	|f(x)|^2 \leq 2\DF_w(f) + 2|f_w|^2 \leq 2\DF_w(f) + 2(|f|^2)_w  \label{eqn:formbound}
	\end{gather}
in which the last step uses Jensen's inequality.

Recalling $\|a\|_\Hil^2=\sum_{|w|=n}\|a\mathds{1}_w\|_\Hil^2$ take $n$ so large (depending only on $a$) that $\sup_{|w|=n}  \|a\mathds{1}_w\|_\Hil^2\leq \frac14$.  Then decompose $\|fa\|_\Hil^2$ according to cells of scale $n$ and compute using~\eqref{eqn:formbound} that
\begin{align*}
	\|fa\|_\Hil^2
	=\sum_{|w|=n} \|fa\mathds{1}_w\|_\Hil^2
	&\leq \sum_{|w|=n} \|f\mathds{1}_w\|_\infty \|a\mathds{1}_w\|_\Hil^2 \\
	&\leq 2\sum_{|w|=n} \bigl( \DF_w(f) + |f|^2_w \bigr) \|a\mathds{1}_w\|_\Hil^2 \\
	&\leq 2\Bigl(\sup_{|w|=n}  \|a\mathds{1}_w\|_\Hil^2\Bigr) \sum_{|w|=n} \DF_w(f) + 2\Bigl(\sup_{|w|=n} (|f|^2)_w\Bigr) \sum_{|w|=n}  \|a\mathds{1}_w\|_\Hil^2 \\
	&\leq \frac12 \DF(f) + 2\Bigl( \sup_{|w|=n} (|f|^2)_w\Bigr) \|a\|_\Hil^2
	\end{align*}
Using the fact that $\mu(X_w)=4^{-n}$ and crudely bounding $\int_{F_w(X)} |f|^2$ by $\|f\|_{L^2}^2$ we obtain~\eqref{eqn:forformbound} with $C=2\cdot4^n$, where $n$ depends only on $a$.  This shows the form is closed, and compactness of the resolvent follows as in Theorem~4.3 of~\cite{HR}.
\end{proof}

The above definition of $\Mag^a_N$ is the Neumann magnetic operator.  We can also define a Dirichlet magnetic operator $\Mag^a$ with the properties asserted in Theorem~\ref{thm:formisclosed}  by requiring~\eqref{eqn:defnofMaga} for all $f\in\domDF_0$, the subspace of functions vanishing on $V_0$.

The Neumann and Dirichlet magnetic operators are related by the magnetic normal derivative, which is defined for $f\in\dom(\Delta)$ and $x\in V_0$ by $d^af(x)=\lim_{m\to\infty} \DF(f,h_m^x)$.  This exists because $h_0^x-h_m^x\in\domDF_0$ and so $\DF(f,h_m^x)=\DF(f,h_0^x)+\int(\Delta f) (h_0^x-h_m^x)\,d\mu\to\int (\Delta f)h_0^x\,d\mu$ as $m\to\infty$.   Notice that then 
\begin{equation*}
	\DF(f,g) = - \int (\Delta f) g\, d\mu + \sum_{x\in V_0} g(x) d^a f.
\end{equation*}
In what follows we will most frequently study the Dirichlet operator, though the same techniques are applicable to the Neumann case.

In the next section we will see how the magnetic form and magnetic operator may be approximated by forms and operators on the graph approximants of the Diamond Lattice fractal.

\section{Approximation of Magnetic Forms and Operators}\label{sec:approx}

We have already seen that the Dirichlet form and Laplacian on the \DLF\ may be understood as limits of corresponding objects defined on the graph approximations.    Here we use results from~\cite{IRT} to show that the resistance structure of our self-similar space allows us to construct a sequence of magnetic operators and magnetic forms on the graphs that approximate the \DLF.  It should be noted that magnetic operators on graphs have been extensively studied, beginning with the work of~\cite{Sunada}, and there are generalizations to quantum graphs~\cite{KS}, but we will  only develop those aspects that are relevant for our needs.

 Recall that a function $f$ is harmonic if it minimizes $\DF(f)$ with prescribed values on $V_0$ and harmonic at scale $m$ if $f\circ F_w$ is harmonic for each $|w|=m$.  As in~\cite{IRT} we use the fact that $\DF$ is a resistance form to extend the module structure on $\Hil$ so as to allow multiplication by the characteristic function $\mathds{1}_w$ of a cell $F_w(X)$ and let $\Hil_m$ be the subspace of $\Hil_m$ spanned by elements $a_w\mathds{1}_w$ for $|w|=m$, where $a_w=\partial A_w$ for a function $A_w$ that is harmonic at scale $m$, so $a_w$ is exact at scale $m$.  This space is finite dimensional, hence closed, and we let $\Tr_m$ denote the projection $\Tr_m:\Hil\to\Hil_m$.    We will usually write $a_m=\Tr_m a$.  It is proved in~\cite{IRT} that $\cup_m \Hil_m$ is dense in $\Hil$.

Following~\cite{HKMRS} we identify $\Hil_m$ with the exact $1$-forms on the scale $m$ approximating graph.  An exact $1$-form on the $m$-scale graph is a function on directed edges such that the sum on the edges of any cell  $F_w(X)$ with $|w|=m$ is zero.  For $a_m\in\Hil_m$ we abuse notation to write $a_m=\sum_{|w|=m} a_w\mathds{1}_w$. Since $a_w=\partial A_w$ for  $A_w$  harmonic at scale $m$ and  unique modulo constants we can define a function on edges using the differences of the $A_w$ values.  For a directed edge $e_{xy}$ from $x$ to $y$ in the scale~$m$ graph we let $w'$ be the address of the unique $m$-cell $F_{w'}(X)$ containing this edge and treat $a_m$ as a function by writing $a_m(e_{xy})=A_{w'}(y)-A_{w'}(x)$.

A $1$-form on the graph approximation at scale $m$ defines a magnetic form and operator on this graph.  Let $a_m$ be as above and $f\in\domDF$. When $n\geq m$ let
\begin{gather}
	\DF_{n}^{a_{m}}(f) =\frac{1}{2} \sum_{x\in V_n} \sum_{y\sim_{n} x} \Bigl| f(x)- f(y)e^{i a_{m}(e_{xy})} \Bigr|^{2} \label{eqn:defnofDFnam}, \text{ and}\\
	\Mag^{a_{m}}_{n} f(x) = \frac{-1}{\deg_n(x)}\sum_{y\sim_{n}x} \Bigl( f(x)- f(y)e^{i a_{m}(e_{xy})} \Bigr) \quad\text{ for }x\in V_{n}\setminus V_{0}, \label{eqn:defnofMagam}
	\end{gather} 
and observe that if  $g$ 
vanishes on $V_0$ then
\begin{equation} \label{eqn:DFmandMagfm}
	\DF_{n}^{a_{m}}(f,g) = \langle -\Mag^{a_{m}}_{n} f, g \rangle_{l^{2}(\mu_n)},
\end{equation}
where the measure $\mu_n$ has mass $\deg_n(x)$ at $x\in V_n$.

\subsection*{Gauge transformations and the structure of locally exact forms}

It is an important fact that on each $m$-cell the graph magnetic form $\DF^{a_m}_n$ may be obtained from the usual scale~$m$ resistance form by local gauge transformations (see Section~3 of~\cite{HKMRS}).   Specifically, we have from~\eqref{eqn:defnofDFnam} and the definition of $a_m$ that
\begin{align}
	\DF_{n}^{a_{m}}(f)
	&= \frac{1}{2} \sum_{|w|=m} \sum_{x,y\in F_w(V_0)}  \Bigl| f(x)- f(y)e^{i (A_{w}(y)-A_{w}(x))} \Bigr|^{2} \notag\\
	&=  \frac{1}{2}\sum_{|w|=m}\sum_{x,y\in F_w(V_0)}\Bigl| f(x)e^{iA_w(x)} - f(y)e^{i A_{w}(y)}  \Bigr|^{2} \notag\\
	&= \sum_{|w|=m}\DF_{n,w}(e^{iA_w}f) \label{eqn:DFamngauge}
	\end{align}
where $\DF_{n,w}$ means that we sum only over those edges in $F_w(X)$.

The same is true for $\DF^{a_m}(f)$, though the proof is slightly different.  Recall that $\DF^{a_m}(f)=\|(\partial+ia_m)f\|_\Hil^2$ and that $a_m=\sum_{|w|=m}a_w\mathds{1}_w$ with $a_w=\partial A_w$.  This and the decomposition of the Hilbert space according to cells (\cite{IRT} Theorem~4.6) implies
\begin{align}
	\bigl\|(\partial+ia_m)f\bigr\|_\Hil
	&=\Bigl\|  \sum_{|w|=m} \bigl( (\partial f)\mathds{1}_w  + i ( \partial A_w) f \mathds{1}_w  \bigr) \Bigr\|_\Hil \notag\\
	&= \sum_{|w|=m} \Bigl\|  e^{-iA_w} \partial \bigl( f e^{iA_w} \bigr) \mathds{1}_w  \Bigr\|_\Hil  \notag \\
	&=  \sum_{|w|=m} \Bigl\| \partial \bigl(  e^{iA_w} f \bigr) \mathds{1}_w  \Bigr\|_\Hil 
	= \sum_{|w|=m} \DF_w( e^{iA_w} f) \label{eqn:DFamgauge}
	\end{align}
where $\DF_w(h)= \DF(h\circ F_w)$.

\subsection*{Convergence of approximating forms}

The essential feature of the forms $\DF^{a_n}_n$ and the operators $\Mag^{a_m}_m$ is that they converge to $\DF^a$ and $\Mag^a$ respectively.  In the special case where the form has a local Coulomb gauge this was proved in~\cite{HKMRS}, but this assumption is essentially the same as assuming that the magnetic field has zero flux through all but finitely many holes, which is a serious constraint on the magnetic fields that can be considered, in particular precluding study of the fields of interest in the present work.  We will need the following more general result.

\begin{thm}\label{thm:cvgeofgraphapprox} If $a\in\Hil$ is real-valued and $f\in\domDF$ then
\begin{equation*}
	\DF^{a_n}_n(f)\to \DF^a(f).
	\end{equation*}
\end{thm}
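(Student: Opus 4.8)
The plan is to interpose the continuous magnetic forms $\DF^{a_n}$ associated with the truncated potentials $a_n=\Tr_n a$, and to establish the two convergences $\DF^{a_n}(f)\to\DF^a(f)$ and $\DF^{a_n}_n(f)-\DF^{a_n}(f)\to0$ separately. The first is continuity of the form in the gauge. Since $f\in\domDF\subset C(X)$ is bounded and $a_n\to a$ in $\Hil$ by density of $\cup_m\Hil_m$, the module bound $\|f(a-a_n)\|_\Hil\le\|f\|_\infty\|a-a_n\|_\Hil$ together with the identity $\DF^a(f)=\|(\partial+ia)f\|_\Hil^2$ and one application of Cauchy--Schwarz give $|\DF^a(f)-\DF^{a_n}(f)|\le 2\,\DF^{a_n}(f)^{1/2}\|f\|_\infty\|a-a_n\|_\Hil+\|f\|_\infty^2\|a-a_n\|_\Hil^2$, which tends to $0$. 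This part is routine.

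For the second convergence I would use the gauge decompositions~\eqref{eqn:DFamngauge} and~\eqref{eqn:DFamgauge} with $m=n$. Since each $n$-cell $F_w(X)$ carries exactly one scale-$n$ edge, both forms split over cells as $\DF^{a_n}_n(f)=\sum_{|w|=n}\DF_0(u_w)$ and $\DF^{a_n}(f)=\sum_{|w|=n}\DF(u_w)$, where $u_w=(e^{iA_w}f)\circ F_w$. Writing $P^\perp$ for the orthogonal projection of $\Hil$ onto the $\domDF_0$-exact (non-harmonic) forms, one has $\DF(u_w)-\DF_0(u_w)=\|P^\perp\partial u_w\|_\Hil^2$, so the task is to show $\sum_{|w|=n}\|P^\perp\partial u_w\|_\Hil^2\to0$. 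The essential auxiliary estimate is that the per-cell gauge oscillation vanishes: with $\theta_w$ the increment of $A_w$ across the edge of $F_w(X)$ one has $|\theta_w|^2=\DF_w(A_w)=\|a_w\mathds{1}_w\|_\Hil^2$, and I claim $\epsilon_n:=\max_{|w|=n}|\theta_w|\to0$. For $a\in\Hil_M$ this holds because $A_w\circ F_w$ is the restriction of a fixed harmonic function, whose edge-differences halve at each successive scale, so its energy on a scale-$n$ cell is $O(4^{-(n-M)})$; the general case follows by density, using $\max_{|w|=n}\|(a-a')_w\mathds{1}_w\|_\Hil\le\|a-a'\|_\Hil$.

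To finish I would set $\psi_w=f\circ F_w$ and $\phi_w=A_w\circ F_w$ (harmonic) and expand $\partial u_w=e^{i\phi_w}(\partial\psi_w+i\psi_w\partial\phi_w)$. Replacing $e^{i\phi_w}$ by the constant phase at one boundary vertex, at a cost of at most $|\theta_w|\,\DF(\psi_w)^{1/2}$ by the module bound, the first summand contributes essentially $\|P^\perp\partial\psi_w\|_\Hil^2=\DF(\psi_w)-\DF_0(\psi_w)$, which sums to $\DF(f)-\DF_n(f)\to0$. For the second summand the decisive point is that $\phi_w$ is harmonic, so $\partial\phi_w$ is a harmonic form and $P^\perp\partial\phi_w=0$; hence $P^\perp(e^{i\phi_w}\psi_w\partial\phi_w)=P^\perp((u_w-c)\partial\phi_w)$ for any constant $c$, whose norm is at most $\mathrm{osc}(u_w)\,|\theta_w|\le|\theta_w|\bigl(\DF(\psi_w)^{1/2}+\|\psi_w\|_\infty|\theta_w|\bigr)$, using $\mathrm{osc}(\psi_w)\le\DF_w(f)^{1/2}$. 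Squaring and summing, each resulting term carries either the factor $\DF(f)-\DF_n(f)$ or a factor $\epsilon_n^2$ against the bounded sums $\sum_{|w|=n}\DF(\psi_w)=\DF(f)$ and $\sum_{|w|=n}|\theta_w|^2=\|a_n\|_\Hil^2\le\|a\|_\Hil^2$, so the whole expression tends to $0$.

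I expect the main obstacle to be precisely this second summand. A naive comparison replacing the phase by a constant on each cell and reducing directly to $\DF_n(f)\to\DF(f)$ fails, because it reintroduces the gauge self-energy $\sum_{|w|=n}\|\psi_w\partial\phi_w\|_\Hil^2$, which is of order $\|a\|_\Hil^2$ and does not vanish---consistent with $\DF^a(f)$ differing from $\DF(f)$ by exactly such a term in the limit. This self-energy is present in both $\DF^{a_n}_n(f)$ and $\DF^{a_n}(f)$ and must cancel in their difference; capturing the cancellation is what forces one to keep $\partial\psi_w$ and $\psi_w\partial\phi_w$ together and to exploit the harmonicity of $\phi_w$, which converts the dangerous linear factor $|\theta_w|$ into the summable $\theta_w^2$.
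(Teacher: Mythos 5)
Your proof is correct, but it takes a genuinely different route from the paper's for the harder half. You share the first step (the paper's Lemma~\ref{lem:cvgelem1}: continuity of the form in the $\Hil$-norm of the potential, via the module bound and a difference of squares). Where you then compare $\DF^{a_n}_n(f)$ with $\DF^{a_n}(f)$ directly along the diagonal, the paper instead inserts the mixed form $\DF^{a_m}_n$ with the coarse gauge $a_m$ frozen and telescopes through three terms: for fixed $m$ the convergence $\DF^{a_m}_n(f)\to\DF^{a_m}(f)$ is just the monotone convergence $\DF_{n,w}(e^{iA_w}f)\to\DF_w(e^{iA_w}f)$ applied to finitely many gauge-transformed functions (Lemma~\ref{lem:cvgelem2}), and the two graph forms at the same level $n$ with gauges from levels $m$ and $n$ are compared by a difference-of-squares/Cauchy--Schwarz estimate controlled entirely by $\|a-a_m\|_\Hil$ (Lemma~\ref{lem:cvgelem3}). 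Your diagonal comparison identifies the per-cell difference as the excess energy $\DF(u_w)-\DF_0(u_w)=\|P^\perp\partial u_w\|_\Hil^2$ and uses the orthogonality of harmonic forms to $\partial$ of functions vanishing on the cell boundary to cancel the gauge self-energy; this is a more structural explanation of why the difference vanishes, and your closing remark about where a naive phase-freezing argument fails is exactly on target. The price is the auxiliary fact that $\epsilon_n=\max_{|w|=n}|\theta_w|\to0$, a per-cell sup bound on the gauge increments themselves that the paper never needs (its Lemma~\ref{lem:cvgelem3} only requires sup bounds on the difference $A_w-B_w$ of two gauges, controlled by $\|a_n-a_m\|_\Hil$); your density argument for this is sound, since $|\theta_w|\le\|a_n\mathds{1}_{F_w(X)}\|_\Hil$ by the resistance estimate and edge-differences of harmonic functions halve at each scale on the \DLF. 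In the end both arguments rest on the same two inputs, $\|a-a_m\|_\Hil\to0$ and $\DF_n(f)\to\DF(f)$, assembled differently.
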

\begin{proof}
Write
\begin{equation*}
	\bigl|  \DF^a(f) -\DF^{a_n}_n(f)  \bigr|
	\leq \bigl| \DF^{a_m}(f) -  \DF^a(f) \bigr|+ \bigl| \DF^{a_m}_n(f) -  \DF^{a_m}(f) \bigr| +\bigl| \DF^{a_n}_n(f) -  \DF^{a_m}_n(f) \bigr|.
	\end{equation*}
Lemma~\ref{lem:cvgelem1} shows the first term goes to zero as $m\to\infty$, Lemma~\ref{lem:cvgelem3} shows the same for the last term provided $n\geq m$, and Lemma~\ref{lem:cvgelem2} proves that the  middle term goes to zero as $n\to\infty$ for any fixed $m$, concluding the proof.
\end{proof}

\begin{lem}\label{lem:cvgelem1} For $a$ and $f$ as in Theorem~\ref{thm:cvgeofgraphapprox}, 
$\DF^{a_m}(f)\to\DF^a(f)$ as $m\to\infty$.
\end{lem}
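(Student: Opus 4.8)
The plan is to reduce the statement to the convergence $a_m\to a$ in $\Hil$ together with the module bound recorded in the introduction of $\Hil$. Recall that $a_m=\Tr_m a$ is the projection of $a$ onto the finite-dimensional subspace $\Hil_m$, and that $\cup_m\Hil_m$ is dense in $\Hil$ (proved in~\cite{IRT}). Since $\Tr_m$ is the orthogonal projection onto $\Hil_m$, density gives $\|a_m-a\|_\Hil\to 0$ as $m\to\infty$. This is the only analytic input about the forms $a_m$ that the argument will use.

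Next I would isolate the difference of the two magnetic derivatives. Because $(\partial+ia_m)f=\partial f+i(a_m f)$ and likewise $(\partial+ia)f=\partial f+i(af)$, the derivation terms cancel and $(\partial+ia_m)f-(\partial+ia)f=i(a_m-a)f$. Thus the two vectors whose $\Hil$-norms define $\DF^{a_m}(f)$ and $\DF^a(f)$ differ by exactly $i(a_m-a)f$, and it suffices to show $\|(a_m-a)f\|_\Hil\to 0$. For this I invoke the module property $\|gh\|_\Hil\le\|g\|_\infty\|h\|_\Hil$ valid for $g\in\domDF$, applied with $g=f$ and $h=a_m-a$, to obtain
\begin{equation*}
	\bigl\|(a_m-a)f\bigr\|_\Hil \le \|f\|_\infty\,\|a_m-a\|_\Hil.
\end{equation*}
Here $\|f\|_\infty<\infty$ because $f\in\domDF$ is a uniform limit of functions harmonic at some scale, hence continuous on the compact set $X$. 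Combined with $\|a_m-a\|_\Hil\to 0$ this yields $(a_m-a)f\to 0$ in $\Hil$, so $(\partial+ia_m)f\to(\partial+ia)f$ in $\Hil$.

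Finally I would pass from norm convergence of the vectors to convergence of the squared norms. By the reverse triangle inequality $\bigl|\,\|u\|_\Hil-\|v\|_\Hil\,\bigr|\le\|u-v\|_\Hil$ applied to $u=(\partial+ia_m)f$ and $v=(\partial+ia)f$, the norms converge, and since they are uniformly bounded (the $\Hil$-norm of $(\partial+ia)f$ is finite and the $a_m$-norms are eventually close to it) the squares converge as well, giving $\DF^{a_m}(f)\to\DF^a(f)$.

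I do not expect a genuine obstacle here: the whole statement is a continuity-of-norm argument once one observes that the derivation part is independent of the magnetic potential. The only point that must not be glossed over is the finiteness of $\|f\|_\infty$, needed to legitimately apply the module inequality; this is exactly where the continuity of functions in $\domDF$ (and the compactness of $X$) enters, and it is the reason the hypothesis $f\in\domDF$ rather than merely $f\in L^2(\mu)$ is used.
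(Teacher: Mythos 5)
Your proposal is correct and follows essentially the same route as the paper: both arguments reduce the problem to $\|(a-a_m)f\|_\Hil\le\|f\|_\infty\|a-a_m\|_\Hil\to0$ via the module bound and the fact that the projections $a_m=\Tr_m a$ converge to $a$ in $\Hil$. The only cosmetic difference is that the paper factors the difference of squares $\bigl|\DF^{a_m}(f)-\DF^a(f)\bigr|$ directly and bounds the sum of square roots, whereas you pass through the reverse triangle inequality and then square; these are the same estimate.
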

\begin{proof}
Recall $\DF^{a}(f)=\|(\partial+ia)f\|_\Hil^2$ and similarly for $a_m$.  Thus
\begin{equation*}
	\bigl| \DF^{a_m}(f) - \DF^a(f) \bigr|=   \bigl| \|(\partial+ia_m)f\|_\Hil - \| (\partial+ia)f\|_\Hil\bigr| \bigl( \|(\partial+ia_m)f\|_\Hil +\| (\partial+ia)f\|_\Hil\bigr).
	\end{equation*}
However the first term is bounded by $\|(a-a_m)f\|_\Hil\leq\|f\|_\infty\|a-a_m\|_\Hil$.  For the second we use $\|a_m\|_\Hil\leq \|a\|_\Hil$ for all $m$ and direct estimation as follows:
\begin{equation*}
	\|(\partial+ia_m)f\|_\Hil \leq \|\partial f\|_\Hil + \|ia_m f\|_\Hil \leq (\DF(f))^{1/2}+ \|f\|_\infty \|a_m\|_\Hil \leq (\DF(f))^{1/2}+ \|f\|_\infty \|a\|_\Hil
	\end{equation*}
The same estimate is valid for $\|(\partial+ia)f\|_\Hil$, so we obtain
\begin{equation*}
	\bigl| \DF^{a_m}(f)- \DF^a(f)\bigr|
	\leq 2\bigl( (\DF(f))^{1/2}+\|f\|_\infty\|a\|_\Hil \bigr) \|f\|_\infty \|a-a_m\|_\Hil \xrightarrow{m\to\infty} 0.\qedhere
	\end{equation*}
\end{proof}

\begin{lem}\label{lem:cvgelem2}  For $a$ and $f$ as in Theorem~\ref{thm:cvgeofgraphapprox}, 
$\DF^{a_m}_n(f)\to\DF^{a_m}(f)$ as $n\to\infty$.
\end{lem}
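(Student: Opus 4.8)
The plan is to exploit the two gauge-transformation identities already established, namely \eqref{eqn:DFamngauge} and \eqref{eqn:DFamgauge}, to reduce the claim to the convergence of ordinary (non-magnetic) graph Dirichlet forms to the fractal form, cell by cell. Since $a_m=\sum_{|w|=m}a_w\mathds{1}_w$ is a fixed form of scale $m$ with $a_w=\partial A_w$, these identities give
\[
	\DF^{a_m}_n(f)=\sum_{|w|=m}\DF_{n,w}\bigl(e^{iA_w}f\bigr),\qquad \DF^{a_m}(f)=\sum_{|w|=m}\DF_w\bigl(e^{iA_w}f\bigr),
\]
each a sum of finitely many terms, one for each word $w$ of length $m$. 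It therefore suffices to show that for each such $w$ the graph cell energy $\DF_{n,w}(g_w)$ converges to $\DF_w(g_w)$ as $n\to\infty$, where $g_w=e^{iA_w}f$.

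For a fixed $w$ with $|w|=m$ I would use self-similarity of the approximating graphs: every edge of the scale $n$ graph lying in the cell $F_w(X)$ is the image under $F_w$ of a unique edge of the scale $(n-m)$ graph of $X$, and no edge is shared between distinct $m$-cells. Pulling back by $F_w$, and recalling that the unit-conductance renormalization carries no scaling factor (the content of $\DF_m(f)=\DF_{m-1}(f)$ for harmonic extensions), yields the exact identity $\DF_{n,w}(g_w)=\DF_{n-m}(g_w\circ F_w)$, which matches the definition $\DF_w(g_w)=\DF(g_w\circ F_w)$. As $n\to\infty$ we have $n-m\to\infty$, so the standard monotone convergence $\DF_k(h)\to\DF(h)$ recalled at the start of Section~\ref{sec:analysis} --- which is in fact the definition of $\DF(h)$ for $h\in\domDF$ --- gives $\DF_{n-m}(g_w\circ F_w)\to\DF(g_w\circ F_w)=\DF_w(g_w)$. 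Summing the finitely many terms completes the argument.

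The main point requiring care, and the only real obstacle, is to verify that the hypotheses of the convergence $\DF_k(h)\to\DF(h)$ genuinely apply, i.e.\ that the pulled-back gauge-transformed function $g_w\circ F_w=e^{i(A_w\circ F_w)}(f\circ F_w)$ lies in $\domDF$. Here I would argue that $f\circ F_w\in\domDF$ because $\DF(f)=\sum_{|w|=m}\DF(f\circ F_w)$ by self-similarity (the case $A_w=0$ of \eqref{eqn:DFamgauge}), so each summand is finite; that $A_w\circ F_w$ is harmonic on $X$, hence continuous, bounded, and of finite energy, and has modulus-one exponential since $a$, and thus $A_w$, is real-valued; and that $\domDF$ is closed under multiplication by bounded finite-energy functions, this being a consequence of $\DF$ being a resistance form. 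Consequently $g_w\circ F_w\in\domDF$, the monotone convergence applies term by term, and the lemma follows.
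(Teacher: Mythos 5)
Your proposal is correct and follows the same route as the paper: both reduce the claim via the gauge identities \eqref{eqn:DFamngauge} and \eqref{eqn:DFamgauge} to the cell-by-cell convergence $\DF_{n,w}(e^{iA_w}f)\to\DF_w(e^{iA_w}f)$ over the finitely many $m$-cells, which is the defining monotone convergence $\DF_k(h)\to\DF(h)$. The paper states this in a single displayed line; your additional verification that $e^{iA_w\circ F_w}(f\circ F_w)\in\domDF$ is a detail the paper leaves implicit.
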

\begin{proof}
For fixed $m$ we have $a_m\in\Hil_m$ so on each $m$-cell $F_w(X)$ there is $A_w$ such that $a_m(e_{xy})=A_w(y)-A_w(x)$ and by~\eqref{eqn:DFamngauge} and~\eqref{eqn:DFamgauge}
\begin{equation*}
\DF_{n}^{a_{m}}(f)
	= \sum_{|w|=m}\DF_{n,w}(e^{iA_w}f)
	\xrightarrow{n\to\infty}
	\sum_{|w|=m}\DF_{w}(e^{iA_w}f)
	= \DF^{a_m}(f) \qedhere
	\end{equation*}	
\end{proof}

\begin{lem}\label{lem:cvgelem3} For $a$ and $f$ as in Theorem~\ref{thm:cvgeofgraphapprox} 
\begin{equation*}
	\bigl| \DF^{a_m}_n(f) - \DF^{a_n}_n (f) \bigr| \quad\text{as $n\geq m\to\infty$.}
	\end{equation*}

\end{lem}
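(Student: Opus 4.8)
\emph{The plan} is to localize the comparison to individual $m$-cells via the gauge identity \eqref{eqn:DFamngauge} and then to expand each local integrand so that its leading term is controlled by the magnetic energy rather than by $\|f\|_\infty^2$. The hard part is that the direct estimate fails: bounding $|e^{ia_m(e)}-e^{ia_n(e)}|\leq|a_m(e)-a_n(e)|$ and applying Cauchy--Schwarz introduces a factor of order $2^n$, because the scale-$n$ graph has $4^n$ edges while $\sum_e|a_m(e)-a_n(e)|^2=\|a_m-a_n\|_\Hil^2$ (an identity established below) is a fixed quantity. One must therefore expose the first-order cancellation, which is where the real content lies.

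First I would record two facts. Because the $\Hil_m$ are nested with dense union, $a_m=\Tr_m a\to a$ in $\Hil$, so with $b:=a_n-a_m$ we have $\|b\|_\Hil\leq\|a-a_n\|_\Hil+\|a-a_m\|_\Hil\to0$ as $n\geq m\to\infty$. Moreover, on a single scale-$n$ edge $e=F_v(X)$ one has $\|b\mathds{1}_v\|_\Hil^2=|b(e)|^2$ (the energy of an exact form on a two-point cell is the square of its increment); summing the orthogonal cell decomposition (\cite{IRT}, Theorem~4.6) over the $n$-subcells of an $m$-cell $F_w$ then yields $\sum_{e\in F_w}|b(e)|^2=\|b\mathds{1}_w\|_\Hil^2$, and in particular $\sum_e|b(e)|^2=\|b\|_\Hil^2$.

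Next, fix $|w|=m$ and set $g=e^{iA_w}f$, where $a_m=\partial A_w$ on $F_w$. Writing $a_n=a_m+b$ and gauging exactly as in \eqref{eqn:DFamngauge}, the $a_m$-phases disappear and the $a_n$-phases become $b$-phases, so the contribution of $F_w$ to $\DF^{a_n}_n(f)-\DF^{a_m}_n(f)$ equals
\[
\tfrac12\sum_e\bigl(|g(x)-g(y)e^{ib(e)}|^2-|g(x)-g(y)|^2\bigr)=\sum_e\bigl[(1-\cos b(e))\operatorname{Re}(\bar g(x)g(y))+\sin(b(e))\operatorname{Im}(\bar g(x)g(y))\bigr].
\]
The key observation is that $\operatorname{Im}(\bar g(x)g(y))=\operatorname{Im}(\bar g(x)(g(y)-g(x)))$, so $|\operatorname{Im}(\bar g(x)g(y))|\leq\|f\|_\infty|g(y)-g(x)|$; thus the first-order $\sin$-term couples to the energy $\sum_e|g(y)-g(x)|^2=\DF_{n,w}(g)$, not to $\|f\|_\infty^2$. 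Using $1-\cos b(e)\leq\tfrac12 b(e)^2$, $|\sin b(e)|\leq|b(e)|$, the Cauchy--Schwarz inequality, and the edge-sum identity from the previous step, the $F_w$-contribution is bounded in absolute value by $\tfrac12\|f\|_\infty^2\|b\mathds{1}_w\|_\Hil^2+\|f\|_\infty\|b\mathds{1}_w\|_\Hil\,\DF_{n,w}(g)^{1/2}$.

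Finally I would sum over $|w|=m$ and apply Cauchy--Schwarz across cells, using $\sum_w\|b\mathds{1}_w\|_\Hil^2=\|b\|_\Hil^2$ and $\sum_w\DF_{n,w}(g)=\DF^{a_m}_n(f)$, together with the triangle-inequality bound $\DF^{a_m}_n(f)^{1/2}\leq\DF(f)^{1/2}+\|f\|_\infty\|a\|_\Hil$ (as in Lemma~\ref{lem:cvgelem1}), to obtain
\[
\bigl|\DF^{a_n}_n(f)-\DF^{a_m}_n(f)\bigr|\leq\tfrac12\|f\|_\infty^2\|b\|_\Hil^2+\|f\|_\infty\|b\|_\Hil\bigl(\DF(f)^{1/2}+\|f\|_\infty\|a\|_\Hil\bigr).
\]
Since $\|b\|_\Hil=\|a_n-a_m\|_\Hil\to0$ as $n\geq m\to\infty$, both terms vanish, proving the claim. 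As flagged above, the whole difficulty is concentrated in the first-order cancellation: bounding the phase difference bluntly loses a factor of order $2^n$, whereas recognizing that its leading contribution is governed by the uniformly bounded magnetic energy lets $\|a_n-a_m\|_\Hil\to0$ drive the estimate.
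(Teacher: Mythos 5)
Your proof is correct, and it reaches the same two load-bearing facts as the paper's argument --- the edge-sum identity $\sum_e|a_n(e)-a_m(e)|^2=\|a_n-a_m\|_\Hil^2$ and a Cauchy--Schwarz pairing of this quantity against a uniformly bounded energy --- but it organizes the algebra differently. The paper localizes to scale-$n$ cells with two gauges $A_w,B_w$ per cell, factors the difference of squares, and applies Cauchy--Schwarz globally to get $T_1^{1/2}T_2^{1/2}$, with $T_1$ controlled by a Leibniz-type estimate for $\DF_{n,w}\bigl((e^{iA_w}-e^{iB_w})f\bigr)$ and $T_2$ bounded by a constant. You instead gauge once per $m$-cell, expand the per-edge difference trigonometrically into a second-order $(1-\cos b)$ term and a first-order $\sin b$ term, and kill the dangerous first-order term via $\operatorname{Im}(\bar g(x)g(y))=\operatorname{Im}\bigl(\bar g(x)(g(y)-g(x))\bigr)$, which couples it to the local energy increment. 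Your version avoids the Leibniz rule entirely and yields an explicit bound linear in $\|a_n-a_m\|_\Hil$ in one pass; the paper's version is more mechanical but requires no clever identity. Your diagnosis of why the naive sup-bound on phase differences loses a factor of order $2^n$ is exactly right and is the same obstruction the paper's $T_1T_2$ factorization is designed to circumvent. Two cosmetic points: the constants in your per-edge expansion depend on whether $\sum_e$ ranges over directed or undirected edges (this affects nothing in the limit), and the bound $\DF^{a_m}_n(f)^{1/2}\leq\DF(f)^{1/2}+\|f\|_\infty\|a\|_\Hil$ is not literally Lemma~\ref{lem:cvgelem1} (which concerns the continuum form $\DF^{a_m}(f)$) --- but the identical triangle-inequality argument applies to the graph form using the edge-sum identity you already established, so this is an attribution slip rather than a gap.
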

\begin{proof}
For $n\geq m$ we write $w'$ for words with $|w'|=m$ and $w$ for words with $|w|=n$.  Let $B_w$ be the functions on cells $F_w(X)$ such that~\eqref{eqn:DFamngauge} yields
\begin{equation*}
	\DF^{a_n}_n(f)= \sum_{|w|=n}\sum_{x,y\in F_w(V_0)} \Bigl|  f(x)e^{iB_{w}(x)} - f(y)e^{i B_{w}(y)} \Bigr|^{2}
\end{equation*}
For $\DF^{a_m}_n$ we instead have functions $A_{w'}$ on cells $F_{w'}(X)$ with $|w'|=m$, but $n\geq m$ so each $m$ cell is a union of $n$ cells, and we may write $A_w$ for the restriction of $A_{w'}$ to each $F_w(X)\subset F_{w'}(X)$.  Then from~\eqref{eqn:DFamngauge}
\begin{align*}
\DF^{a_m}_n(f)
	&= \sum_{|w'|=m}\sum_{\{|w|=n:F_w(X)\subset F_{w'}(X)\}} \sum_{x,y\in F_w(V_0)} \Bigl| f(x)e^{iA_{w}(x)}- f(y)e^{i A_{w}(y)} \Bigr|^{2}\\
	&= \sum_{|w|=n}\sum_{x,y\in F_w(V_0)}\Bigl| f(x)e^{iA_{w}(x)}- f(y)e^{i A_{w}(y)} \Bigr|^{2}
\end{align*}
We write the difference as a sum over $V_n$, with the value of $w$ in any term  implicitly given by the unique choice such that $x,y\in F_w(V_0)$.
\begin{align}
	\bigl| \DF^{a_m}_n(f) - \DF^{a_n}_n(f)\bigr|
	&= \Biggl| \sum_{x,y\in V_n, x\sim_n y}  \biggl( \Bigl| f(x)e^{iA_{w}(x)}- f(y)e^{i A_{w}(y)} \Bigr|^{2} - \Bigl|  f(x)e^{iB_{w}(x)} - f(y)e^{i B_{w}(y)} \Bigr|^{2} \biggr)\Biggr|\notag\\
	&\leq T_1^{1/2} T_2^{1/2} \label{eqn:CauchySchwarzforDFdiffs}
	\end{align}
where the terms $T_1$ and $T_2$ are estimated as follows, using the standard inequality $\DF_n(uv)\leq 2\|u\|_\infty^2\DF_n(v)+2\|v\|_\infty^2\DF_n(u)$.
\begin{align}
	T_1
	&=\sum_{x,y\in V_n, x\sim_n y} \Bigl( \bigl| e^{iA_w (x)}f(x)-e^{iA_w(y)}f(y) \bigr| - \bigl| e^{iB_w (x)}f(x)-e^{iB_w(y)}f(y) \bigr| \Bigr)^2 \notag\\
	&\leq \sum_{x,y\in V_n,  x\sim_n y} \Bigl| \bigl( e^{iA_w (x)}-  e^{iB_w (x)}\bigr) f(x) -\bigl( e^{iA_w(y)}  - e^{iB_w(y)}\bigr) f(y) \Bigr|^2 \notag\\
	&=\sum_{|w|=n} \DF_{n,w}\Bigl( \bigl( e^{iA_w}-  e^{iB_w}\bigr) f \Bigr) \notag\\ 
	&\leq 2\sum_{|w|=n} \Bigl(  \bigl\|  e^{iA_w}-  e^{iB_w} \bigr\|_{\infty}^2 \DF_{n,w}(f) +  \|f\|_{\infty}^2 \DF_{n,w}\bigl( e^{iA_w} -  e^{iB_w} \bigr)\Bigr)\label{eqn:estforfirstfactorindiffofsquares1}\\
	&=2\sum_{|w|=n} \Bigl( \|  A_w-B_w \|_{\infty}^2 \DF_{n,w}(f) +  \|f\|_{\infty}^2  \DF_{n,w}\bigl( e^{i(A_w-B_w)} \bigr)\Bigr)\label{eqn:estforfirstfactorindiffofsquares2}\\
	&\leq 2 \sup_{|w|=n} \| A_w-B_w \|_{\infty}^2 \sum_{|w|=n} \DF_{n,w}(f)  + 2 \|f\|_{\infty}^2 \sum_{|w|=n} \DF_{n,w}\bigl(A_w-B_w \bigr). \label{eqn:estforfirstfactorindiffofsquares}
	\end{align}
In passing from~\eqref{eqn:estforfirstfactorindiffofsquares1} to~\eqref{eqn:estforfirstfactorindiffofsquares2} we used  $\bigl|e^{iA_w(x)}-e^{iB_w(x)}\bigr|\leq|A_w(x)-B_w(x)|$ to estimate the left term, while on the right we used that $\DF_{n,w}(e^{i\theta} g)=\DF_{n,w}(g)$ and $\DF_{n,w}$ vanishes on constants.  From~\eqref{eqn:estforfirstfactorindiffofsquares2} to~\eqref{eqn:estforfirstfactorindiffofsquares} we applied 
\begin{equation*}
	\bigl| e^{i(A_w-B_w)(x)}- e^{i(A_w-B_w)(y)} \bigr|\leq \bigl| (A_w-B_w)(x)- (A_w-B_w)(y) \bigr|
	\end{equation*}
 and the definition of $\DF_{n,w}$.
\begin{align}
	T_2&= \sum_{x,y\in V_n} \Bigl( \bigl| e^{iA_w (x)}f(x)-e^{iA_w(y)}f(y) \bigr| + \bigl| e^{iB_w (x)}f(x)-e^{iB_w(y)}f(y) \bigr| \Bigr)^2 \notag\\
	&\leq 2 \sum_{x,y\in V_n} \Bigl( \bigl| e^{iA_w (x)}f(x)-e^{iA_w(y)}f(y) \bigr|^2 + \bigl| e^{iB_w (x)}f(x)-e^{iB_w(y)}f(y) \bigr|^2 \Bigr) \notag\\
	&=2\sum_{|w|=n} \Bigl( \DF_{n,w}\bigl( e^{iA_w}f \bigr) + \DF_{n,w}\bigl( e^{iB_w}f \bigr) \Bigr) \notag\\
	&\leq 4\sum_{|w|=n} \Bigl( \bigl\| e^{iA_w}\bigr\|_\infty^2 \DF_{n,w}(f) + \|f\|_\infty^2\DF_{n,w}\bigl( e^{iA_w} \bigr) +\bigl\| e^{iB_w}\bigr\|_\infty^2 \DF_{n,w}(f)+  \|f\|_\infty^2\DF_{n,w}\bigl( e^{iB_w} \bigr) \Bigr) \notag\\
	&\leq 4 \sum_{|w|=n} \Bigl( 2\DF_{n,w} (f) + \|f\|_\infty^2\Bigl( \DF_{n,w}\bigl( e^{iA_w} \bigr) + \DF_{n,w}\bigl( e^{iB_w} \bigr) \Bigr)\Bigr) \label{eqn:estforsecondfactorindiffofsquares1}\\
	&\leq 4 \sum_{|w|=n} \Bigl( 2\DF_{n,w} (f) + \|f\|_\infty^2\Bigl( \DF_{n,w}\bigl( A_w \bigr) + \DF_{n,w}\bigl( B_w \bigr) \Bigr)\Bigr) \label{eqn:estforsecondfactorindiffofsquares}
	\end{align}
where passage from~\eqref{eqn:estforsecondfactorindiffofsquares1} to~\eqref{eqn:estforsecondfactorindiffofsquares} uses $|e^{iA_w(x)}-e^{iA_w(y)}|\leq |A_{w}(x)-A_w(y)|$ and similarly for $B_w$.

Now $\sum_{|w|=n}\DF_{n,w}(f)=\DF_n(f)\leq \DF(f)$, and
\begin{equation*}
	\sum_{|w|=n}\DF_{n,w}(A_w)=\sum_{|w|=n}\|\partial A_w\mathds{1}_w\|_\Hil^2=\sum_{w=m} \|a_w\mathds{1}_w \|_{\Hil}^2=\|a_m\|_\Hil^2\leq \|a\|_\Hil^2
	\end{equation*}
and similarly $\sum_{|w|=n}\DF_{n,w}(B_w)=\|a_n\|_\Hil^2\leq \|a\|_\Hil^2$, so~\eqref{eqn:estforsecondfactorindiffofsquares} becomes
\begin{equation}\label{eqn:T2est}
	T_2\leq 8 \bigl(\DF(f) + \|f\|_\infty^2 \|a\|_{\Hil}^2 \bigr)
	\end{equation}
In the same way $\sum_{|w|=n}\DF_{n,w}(A_w-B_w)=\|a_n-a_m\|_\Hil^2$.  Combining this with the fact that for any $w$ with $|w|=n$
\begin{equation*}
	\|A_w-B_w\|_\infty^2\leq \DF_{n,w}(A_w-B_w)\leq \|a_n-a_m\|_{\Hil}^2
	\end{equation*}
the estimate~\eqref{eqn:estforfirstfactorindiffofsquares} is
\begin{equation}\label{eqn:T1est}
	T_1\leq 2 \bigl( \DF(f) + \|f\|_\infty^2\bigr)  \|a_n-a_m\|_\Hil^2
	\leq 2 \bigl( \DF(f) + \|f\|_\infty^2\bigr)  \|a -a_m\|_\Hil^2.
	\end{equation}
where the second inequality reflects the fact that $a_n$ is a sequence of projections of $a$ to nested subspaces $\Hil_n$.
Finally we have from~\eqref{eqn:CauchySchwarzforDFdiffs}, \eqref{eqn:T2est} and~\eqref{eqn:T1est}
\begin{equation*}
	\Bigl(\DF^{a_m}_n(f) - \DF^{a_n}_n(f)\Bigr)^2
	\leq T_1 T_2
	\leq  16\bigl(  \DF(f) + \|f\|_\infty^2  \bigr)\bigl( \DF(f) +\|f\|_\infty^2 \|a\|_\Hil^2 \bigr)\|a-a_m\|_\Hil^2.
	\end{equation*}
which establishes the result.
\end{proof}

\subsection*{Convergence of approximating magnetic operators}

We will need the following result, which is of a standard type.

\begin{thm}\label{thm:cvgeofmagops}
If $4^m \Mag^{a_m}_m f$ converges uniformly on $V_*\setminus V_0=\bigl(\cup_m V_m\bigr)\setminus V_0$ to a continuous function $F$ then $f\in\dom(\Mag^a)$ and $\Mag^a f$ is the continuous extension of $F$ to $X$.
\end{thm}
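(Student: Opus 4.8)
The plan is to verify directly the relation defining the Dirichlet magnetic operator, namely to prove that $\DF^a(f,g)=-\int_X F\bar g\,d\mu$ for every $g\in\domDF_0$; granting this, the characterization of $\Mag^a$ as the magnetic analogue of~\eqref{eqn:defnofLap} gives both $f\in\dom(\Mag^a)$ and that $\Mag^a f$ is the continuous extension of $F$ to $X$. The argument mirrors the proof that $4^m\Delta_m f\to\Delta f$ in Section~\ref{sec:analysis}. Its starting point is the diagonal case $m=n$ of the graph identity~\eqref{eqn:DFmandMagfm},
\begin{equation*}
	\DF^{a_n}_n(f,g)=\bigl\langle -\Mag^{a_n}_n f,\,g\bigr\rangle_{l^2(\mu_n)},
\end{equation*}
which is valid because $g\in\domDF_0$ vanishes on $V_0$, and I would pass to the limit in $n$ on each side separately.

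For the left-hand side, fix $g\in\domDF_0$ and note that $\DF^{a_n}_n$ and $\DF^a$ are the quadratic forms of Hermitian sesquilinear forms, hence are recovered from their diagonal values by the polarization identity. Applying Theorem~\ref{thm:cvgeofgraphapprox} to each of the four functions $f+i^k g\in\domDF$, $k=0,1,2,3$, gives $\DF^{a_n}_n(f+i^k g)\to\DF^a(f+i^k g)$, and polarization then yields $\DF^{a_n}_n(f,g)\to\DF^a(f,g)$.

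For the right-hand side, I would rewrite the graph inner product against the rescaled measure,
\begin{equation*}
	\bigl\langle -\Mag^{a_n}_n f,\,g\bigr\rangle_{l^2(\mu_n)}=\int_X\bigl(-4^n\Mag^{a_n}_n f\bigr)\bar g\,d\bigl(4^{-n}\mu_n\bigr),
\end{equation*}
and show this converges to $-\int_X F\bar g\,d\mu$. Writing $\phi_n=4^n\Mag^{a_n}_n f$, I would split the difference $\int_X\phi_n\bar g\,d(4^{-n}\mu_n)-\int_X F\bar g\,d\mu$ into the two pieces $\int_X(\phi_n-F)\bar g\,d(4^{-n}\mu_n)$ and $\int_X F\bar g\,d(4^{-n}\mu_n)-\int_X F\bar g\,d\mu$. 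By hypothesis $\phi_n\to F$ uniformly on $V_*\setminus V_0$, so the first piece is at most $2\|g\|_\infty\sup_{V_n\setminus V_0}|\phi_n-F|\to 0$, using $4^{-n}\mu_n(V_n)=2$ and the fact that $g$ vanishes on $V_0$ (where $\phi_n$ is undefined); the second piece tends to $0$ by the weak convergence $4^{-n}\mu_n\to\mu$ recorded in Section~\ref{sec:analysis}, since $F\bar g\in C(X)$.

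Combining the two limits yields $\DF^a(f,g)=-\int_X F\bar g\,d\mu$ for every $g\in\domDF_0$, which is exactly the relation defining $\Mag^a$; hence $f\in\dom(\Mag^a)$ and $\Mag^a f$ is the continuous extension of $F$ to $X$. I expect the only delicate point to be this last interchange of limits: the functions $\phi_n$ live on the varying finite sets $V_n$ rather than on $X$, so the estimate must couple their uniform convergence with the weak convergence of the measures $4^{-n}\mu_n$, and it is precisely the hypothesis $g\in\domDF_0$---continuity together with vanishing on $V_0$---that makes both pieces of the split go to zero.
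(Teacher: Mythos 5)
Your proposal is correct and takes essentially the same route as the paper: both pass to the limit in the identity $\DF^{a_n}_n(f,g)=\langle -\Mag^{a_n}_n f, g\rangle_{l^2(\mu_n)}$, handling the form side via Theorem~\ref{thm:cvgeofgraphapprox} and the inner-product side via the approximation of $\mu$ by $4^{-n}\mu_n$ (the paper realizes the latter by integrating the harmonic-spline interpolant $k_m$ built from the $h_m^x$, which is exactly how the weak convergence you invoke was established in Section~\ref{sec:analysis}). Your explicit polarization step and the splitting of the measure-convergence estimate are details the paper leaves implicit, but there is no substantive difference.
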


\begin{proof}
For $x\in V_m$ recall $h_m^x$ is the scale $m$ harmonic function which is $1$ at $x$ and vanishes on $V_m\setminus\{x\}$.   Given any $g\in\domDF$ with that vanishes on $V_0$ let $k_m(y)=4^m\sum_{x\in V_m} \Mag^{a_m}_m f(x) \bar{g}(x) h_m^x(y)$.  The integral $\int h_m^x(y)\,d\mu(y)=4^{-m}\deg_m(x)$, so
\begin{equation*}
	\int_X k_m(y)\, d\mu(y) = \sum_{x\in V_m} \deg_m(x) \Mag^{a_m}_m f(x) \bar{g}(x)
	= \langle \Mag^{a_m}_m f, g\rangle_{l^2(\mu_m)}
	=- \DF^{a_m}_m(f,g)
	\end{equation*}
where we were able to use~\eqref{eqn:DFmandMagfm} because $g=0$ on $V_0$. 
Now uniform convergence of $4^m \Mag^{a_m}_m f$ to $F$ and Theorem~\ref{thm:cvgeofgraphapprox} imply 
\begin{equation*}
	\int_X F(y)\bar{g}(y)\, d\mu(y) =-\DF^a(f,g)
	\end{equation*}
 for all $g\in \domDF$, from which $F=\Mag^{a}f$.
\end{proof}

Theorem~\ref{thm:cvgeofmagops} has a converse provided that the convergence of $a_n\to a$ is sufficiently uniform.

\begin{thm}\label{thm:findomMaga}
Suppose $a$ is such that $\sup_{|w|=m}4^m\bigl\|(a-a_m)\mathds{1}_{F_{w}(X)}\bigr\|_\Hil \to0$ as $m\to\infty$.  If $f\in\dom(\Mag^a)$ then $4^m \Mag^{a_m}_m f$ converges uniformly to $\Mag^a f$ on $V_*$.
\end{thm}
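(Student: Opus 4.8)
The plan is to test $\Mag^{a_m}_m f$ against the harmonic bumps $h^x_m$, exactly as in the proof of Theorem~\ref{thm:cvgeofmagops}, and to quantify the resulting discrepancy of forms. Fix $m$ and a point $x\in V_m\setminus V_0$. Since $h^x_m$ vanishes on $V_0$ and equals $\delta_x$ on $V_m$, \eqref{eqn:DFmandMagfm} gives
\[
4^m\Mag^{a_m}_m f(x)=-\frac{4^m}{\deg_m(x)}\,\DF^{a_m}_m(f,h^x_m),
\]
while $f\in\dom(\Mag^a)$ yields the exact identity $\DF^a(f,h^x_m)=-\int(\Mag^a f)\,h^x_m\,d\mu$. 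Because $\int h^x_m\,d\mu=4^{-m}\deg_m(x)$ and $h^x_m$ is supported on the $\deg_m(x)$ scale-$m$ cells meeting $x$, whose diameters tend to $0$, the average $\tfrac{4^m}{\deg_m(x)}\int(\Mag^a f)\,h^x_m\,d\mu$ tends to $\Mag^a f(x)$ uniformly in $x$ by uniform continuity of $\Mag^a f$ on the compact set $X$. It therefore suffices to prove that $\tfrac{4^m}{\deg_m(x)}\bigl|\DF^a(f,h^x_m)-\DF^{a_m}_m(f,h^x_m)\bigr|\to0$ uniformly in $x$.

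I would split this into the continuum change of connection, $(\DF^a-\DF^{a_m})(f,h^x_m)$, and the graph-to-continuum discrepancy at fixed connection, $(\DF^{a_m}-\DF^{a_m}_m)(f,h^x_m)$. For the first, writing $\DF^b(f,g)=\langle(\partial+ib)f,(\partial+ib)g\rangle_\Hil$ and expanding, every surviving summand contains a factor $(a-a_m)$ and is localized to the support $S^x_m$ of $h^x_m$. Using $\|gb\|_\Hil\le\|g\|_\infty\|b\|_\Hil$, the value $\DF(h^x_m)=\deg_m(x)$ together with $\|h^x_m\|_\infty=1$, and the cellwise bound $\|(a-a_m)\mathds{1}_{S^x_m}\|_\Hil\le\deg_m(x)^{1/2}\sup_{|w|=m}\|(a-a_m)\mathds{1}_{F_w(X)}\|_\Hil$, the prefactor $4^m$ is absorbed: each term is at most a constant depending on $\DF(f),\|f\|_\infty,\|a\|_\Hil$ times $4^m\sup_{|w|=m}\|(a-a_m)\mathds{1}_{F_w(X)}\|_\Hil$, which vanishes by hypothesis. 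As $\deg_m(x)\ge2$ this is uniform in $x$.

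The second term is the main obstacle, and here it is essential that $h^x_m$ is harmonic at scale $m$. Polarizing \eqref{eqn:DFamngauge} and \eqref{eqn:DFamgauge}, both forms split over the scale-$m$ cells meeting $x$ into $\DF_w(e^{iA_w}f,e^{iA_w}h^x_m)$ and $\DF_{m,w}(e^{iA_w}f,e^{iA_w}h^x_m)$. On each such cell $h^x_m$ is harmonic, so $v:=e^{iA_w}h^x_m$ differs from the harmonic function agreeing with it on $F_w(V_0)$ only by $v_0:=\bigl(e^{iA_w}-e^{iA_w(x)}\bigr)(h^x_m\circ F_w)$, which vanishes on $F_w(V_0)$; since harmonic functions are $\DF$-orthogonal to functions vanishing on the cell boundary, the graph and continuum forms agree on the harmonic part and the per-cell discrepancy collapses to $\DF_w(e^{iA_w}f,v_0)$. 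Estimating $\DF_w(v_0)^{1/2}$ by the local connection energy $\|a_m\mathds{1}_{F_w(X)}\|_\Hil$ and summing by Cauchy--Schwarz bounds the whole term by a constant times $\deg_m(x)^{-1/2}\cdot4^m\sup_{|w|=m}\|a_m\mathds{1}_{F_w(X)}\|_\Hil$. The crux is thus to show that the local flux decays faster than $4^{-m}$: on the \DLF\ the flux through a scale-$m$ cell is proportional to its area $2s^{m-1}$, so this quantity is at most $C(4s)^m\to0$ because $s\le1/8$. I expect the delicate point to be carrying out this reduction \emph{uniformly} in $x$ while reconciling the different cell gauges $A_w$ at the shared vertex $x$, the residual mismatch of neighbouring gauges being exactly the local flux controlled above and the change from $a_m$ to $a$ being absorbed by the first term; alternatively one may gauge each cell back to the non-magnetic setting and invoke the convergence $4^m\Delta_m g\to\Delta g$ for $g=e^{iA_w}f$ underlying $4^m\Delta_m f\to\Delta f$, which avoids assuming smallness of the flux but demands the same uniform bookkeeping of gauges.
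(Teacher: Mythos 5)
Your reduction to testing against the harmonic bumps $h^x_m$, and your treatment of the change-of-connection term $(\DF^a-\DF^{a_m})(f,h^x_m)$, are sound and close in spirit to the paper's argument. The gap is in your second term, and it is a real one. Your Cauchy--Schwarz estimate leaves you needing $4^m\sup_{|w|=m}\|a_m\mathds{1}_{F_w(X)}\|_\Hil\to0$, and you justify this by saying ``the local flux decays faster than $4^{-m}$.'' But $\|a_m\mathds{1}_{F_w(X)}\|_\Hil$ is the full local energy of the $1$-form, not its flux: it contains the exact (gauge) part of $a_m$ on the cell, which does not decay like the area of a hole. Concretely, take $a=a_0=\partial A\in\Hil_0$ with $A$ globally harmonic and nonconstant. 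Then $a-a_m=0$ for all $m$, so the hypothesis of the theorem holds trivially; but harmonic energy on the \DLF\ splits equally among the four subcells of each cell, so $\|a_0\mathds{1}_{F_w(X)}\|_\Hil=2^{-m}\|a_0\|_\Hil$ for $|w|=m$, and $4^m\cdot2^{-m}\|a_0\|_\Hil\to\infty$. The quantity you need to vanish actually blows up, even though the theorem is certainly true for this $a$ (it is a single global gauge transform of the Laplacian). The appeal to the cell area $2s^{m-1}$ also imports the uniform-field setting of the final section, which is not among the hypotheses of this theorem.

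The paper avoids this term entirely by gauging the test function instead of estimating the mismatch: it tests against $g_n^x=\frac{4^n}{\deg_n(x)}e^{-iA_n^x}h_n^x$, where $A_n^x$ is a local potential for $a_n$ on the cells meeting $x$ normalized by $A_n^x(x)=0$. Then $e^{iA_n^x}g_n^x$ is a multiple of $h_n^x$, hence harmonic at scale $n$, so $\DF^{a_n}_n(f,g_n^x)=\DF_n\bigl(e^{iA_n^x}f,e^{iA_n^x}g_n^x\bigr)=\DF\bigl(e^{iA_n^x}f,e^{iA_n^x}g_n^x\bigr)=\DF^{a_n}(f,g_n^x)$ with no error at all; the only remaining discrepancy is the change of connection from $a_n$ to $a$, which is exactly what the hypothesis controls (Lemma~\ref{lem:cvgeofgraphappwithgn}). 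The small price is checking that $g_n^x$ is still an approximate identity, which follows from $A_n^x(x)=0$ and the smallness of $A_n^x$ on the cells at $x$. If you want to salvage your route with the ungauged $h_m^x$, Cauchy--Schwarz on energies is too lossy: you would need to exploit that your $v_0$ is both uniformly small and supported on a set of measure $O(4^{-m})$, for instance by integrating it against $\Delta\bigl(e^{iA_w}f\bigr)$, which reintroduces precisely the regularity and gauge-bookkeeping problems you flag at the end of your sketch.
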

\begin{proof}
$\Mag^af\in\domDF$ it is  continuous, so for any approximate identity sequence $g_n^x$ at $x\in V_*$ we have $\langle \Mag^a f,g_n^x\rangle\to \Mag^a f(x)$.  If in addition $g_n^x\in\domDF$ then this implies $-\DF^a(f,g_n^x)\to\Mag^a f(x)$.  For $x\in V_*$ take $n_0$ so $x\in V_{n_0}$ and define $g_n^x$  for $n\geq n_0$ as follows. For each $n$-cell $F_w(X)$ containing $x$ take $A_w$ such that $\partial A_w=a_n$ as was done at the beginning of Section~\ref{sec:approx}. These functions are unique modulo constants; choose them such that $A_w(x)=0$ and let $A_n^x=A_w$ on $F_w(X)$.  Then, for all $n$ such that the denominator is non-zero, let 
\begin{equation}\label{eqn:gnx}
	g_n^x=\frac{4^n}{\deg_n(x)} e^{-iA_n^x}h_n^x.
	\end{equation}
This function is in $\domDF$ and supported on the $n$-cells that meet at $x$.   Moreover convergence of $a_n$ to $a$ ensures that $A_n^x$ is nearly constant on these cells when $n$ is large, or more precisely, $A_w\circ F_w$ converges to zero as $|w|\to\infty$.  This and the choice $A_n^x(x)=0$ ensures that $e^{-iA_n^x}\to 1$ as $n\to\infty$, and therefore that
\begin{equation*}
	\frac{4^n}{\deg_n(x)} \int_X e^{-iA_n^x}h_n^x\, d\mu
	=\frac{\int_X e^{-iA_n^x}h_n^x\, d\mu}{\int h_n^x\, d\mu }
	\to1 \quad\text{as }n\to\infty
	\end{equation*}
which establishes that $g_n^x$ is an approximate identity sequence from $\domDF$.  By direct computation we also have
\begin{equation}\label{eqn:Maganwhengisanharmonic}
	4^n\Mag^{a_n}_n f(x)
	= \langle \Mag^{a_n}_n f, g_n^x \rangle_{l^2(\mu_n)}
	= -\DF^{a_n}_n (f,g_n^x).
	\end{equation}
In light of the preceding the result follows from Lemma~\ref{lem:cvgeofgraphappwithgn}.
\end{proof}

\begin{lem}\label{lem:cvgeofgraphappwithgn} Under the hypotheses of Theorem~\ref{thm:findomMaga},
 $\DF^{a}(f,g_n^x)-\DF^{a_n}_n(f,g_n^x)\to0$  uniformly in $x$ as $n\to\infty$.
\end{lem}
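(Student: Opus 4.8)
The plan is to telescope through the continuous magnetic form $\DF^{a_n}$ built from the projected potential $a_n$, writing
\begin{equation*}
	\DF^a(f,g_n^x) - \DF^{a_n}_n(f,g_n^x) = \bigl( \DF^a(f,g_n^x) - \DF^{a_n}(f,g_n^x) \bigr) + \bigl( \DF^{a_n}(f,g_n^x) - \DF^{a_n}_n(f,g_n^x) \bigr),
	\end{equation*}
so that the first bracket involves only the difference of potentials $b=a-a_n$ while the second compares the continuous and graph forms for the \emph{same} potential $a_n$. Throughout I would use that $g_n^x$, and hence the $1$-form $(\partial+ia_n)g_n^x$, is supported on the $\deg_n(x)$ cells $F_w(X)$ with $|w|=n$ that meet $x$, on each of which $e^{iA_w}g_n^x = 4^n\deg_n(x)^{-1}h_n^x$ by the choice $A_w=A_n^x$ with $A_n^x(x)=0$.

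For the second bracket I would apply the gauge identities~\eqref{eqn:DFamgauge} and~\eqref{eqn:DFamngauge} (polarized) to rewrite it as $\sum_{|w|=n}\bigl(\DF_w-\DF_{n,w}\bigr)\bigl(e^{iA_w}f,\,4^n\deg_n(x)^{-1}h_n^x\bigr)$, the sum running only over cells meeting $x$. Since $h_n^x$ is harmonic at scale~$n$, pulling back by $F_w$ reduces each term to a pairing $\DF_m(\,\cdot\,,h)$ with $h$ harmonic at scale~$0$; by the Hermitian-symmetric version of the fact that $\DF_m(\,\cdot\,,\,\cdot\,)$ is independent of $m$ when one argument is harmonic, $\DF_w$ and $\DF_{n,w}$ agree on these arguments, so the second bracket \emph{vanishes identically}.

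For the first bracket I would expand, with $b=a-a_n$,
\begin{equation*}
	\DF^a(f,g_n^x) - \DF^{a_n}(f,g_n^x) = \langle (\partial+ia_n)f,\, ib g_n^x\rangle_\Hil + \langle ibf,\, (\partial+ia_n)g_n^x\rangle_\Hil + \langle ibf,\, ibg_n^x\rangle_\Hil,
	\end{equation*}
and bound each inner product by Cauchy--Schwarz in $\Hil$. The factors $(\DF^{a_n}(f))^{1/2}\leq (\DF(f))^{1/2}+\|f\|_\infty\|a\|_\Hil$, $\|g_n^x\|_\infty = 4^n\deg_n(x)^{-1}$, and $\|(\partial+ia_n)g_n^x\|_\Hil = 4^n\deg_n(x)^{-1/2}$ (the last from $\DF_w(h_n^x)=1$) combine with the localization estimate
\begin{equation*}
	\bigl\| b f\mathds{1}_{\operatorname{supp}g_n^x}\bigr\|_\Hil \leq \|f\|_\infty \bigl\| b\mathds{1}_{\operatorname{supp}g_n^x}\bigr\|_\Hil \leq \|f\|_\infty\, \deg_n(x)^{1/2}\sup_{|w|=n}\bigl\|(a-a_n)\mathds{1}_{F_w(X)}\bigr\|_\Hil.
	\end{equation*}
In every term the powers of $4^n$ and $\deg_n(x)$ cancel, leaving a bound of the form $C(f,a)\,\sup_{|w|=n}4^n\|(a-a_n)\mathds{1}_{F_w(X)}\|_\Hil$, which tends to $0$ uniformly in $x$ by hypothesis.

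I expect the main obstacle to be the middle inner product $\langle ibf,\,(\partial+ia_n)g_n^x\rangle_\Hil$: because $g_n^x$ concentrates at $x$, the norm $\|(\partial+ia_n)g_n^x\|_\Hil$ blows up like $4^n$, so a naive Cauchy--Schwarz against $\|bf\|_\Hil$ (which tends to $0$ with no usable rate) is hopeless. The resolution is that this $1$-form is supported on the cells meeting $x$, so only $b$ restricted to those cells enters; the hypothesis $\sup_{|w|=n}4^n\|(a-a_n)\mathds{1}_{F_w(X)}\|_\Hil\to0$ then supplies precisely the $4^n$-decay needed to absorb the blow-up. It is exactly for this step that the \emph{uniform} hypothesis of Theorem~\ref{thm:findomMaga}, rather than mere norm convergence $a_n\to a$ in $\Hil$, is indispensable.
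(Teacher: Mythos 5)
Your proposal is correct and takes essentially the same route as the paper: your observation that the ``second bracket'' vanishes identically is exactly the paper's identity~\eqref{eqn:gnxanharmonicity}, namely that $\DF^{a_n}_n(f,g_n^x)=\langle(\partial+ia_n)f,(\partial+ia_n)g_n^x\rangle_\Hil$ because $e^{iA_n^x}g_n^x=4^nh_n^x/\deg_n(x)$ is harmonic at scale~$n$, and the remaining difference is then bounded cell-by-cell using $\|g_n^x\|_\infty=4^n/\deg_n(x)$, the energy $\DF(h_n^x)=\deg_n(x)$, and the uniform hypothesis on $\sup_{|w|=n}4^n\|(a-a_n)\mathds{1}_{F_w(X)}\|_\Hil$, just as in Lemma~\ref{lem:cvgeofgraphappwithgn}'s proof. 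The only cosmetic difference is your three-term expansion of the difference of sesquilinear forms (including the harmless quadratic term $\langle ibf,ibg_n^x\rangle_\Hil$) versus the paper's two-term telescoping.
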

\begin{proof}
The function $g_n^x$ is supported on the $n$-cells meeting at $x$ and $\partial A_n^x=a_n$ on these cells, so
\begin{equation}\label{eqn:gnxanharmonicity}
	\DF^{a_n}_n(f,g_n^x)
	= \DF_n\bigl( e^{iA_n^x}f, e^{iA_n^x}g_n^x\bigr)
	= \DF \bigl( e^{iA_n^x}f, e^{iA_n^x}g_n^x\bigr)
	\end{equation}
where in the last step we used the fact that $ e^{iA_n^x}g_n^x=4^n h_n^x/\deg_n(x)$ is harmonic at scale~$n$ by~\eqref{eqn:gnx}.  We may re-write this as
\begin{align*}
	\DF^{a_n}_n(f,g_n^x)
	= \left\langle  \partial \bigl( e^{iA_n^x}f\bigr) ,\partial\bigl( e^{iA_n^x}g_n^x\bigr) \right\rangle_{\Hil}
	&=  \left\langle e^{iA_n^x}(\partial+ia_n) f , e^{iA_n^x}(\partial +ia_n) g_n^x \right\rangle_{\Hil}\\
	&=\left\langle (\partial+ia_n) f , (\partial +ia_n) g_n^x \right\rangle_{\Hil}.
	\end{align*}
Subtracting this from $\DF^a(f,g_n^x)=\langle (\partial+ia)f,(\partial+ia)g_n^x\rangle$ we obtain
\begin{equation}\label{eqn:cvgeofgraphappwithgn1}
	\bigl| \DF^a(f,g_n^x) - \DF^{a_n}_n(f,g_n^x) \bigr| 
	\leq  \bigl| \langle (\partial +ia)f,i(a-a_n)g_n^x\rangle_{\Hil}\bigr| + \bigl| \langle i(a-a_n)f, (\partial+ia_n) g_n^x \rangle_{\Hil} \bigr|.
	\end{equation}
It is natural to decompose over the scale~$n$ cells $F_w(X)$ that meet at $x$, calling the corresponding set of words $W_n^x$ and to bound using Cauchy-Schwarz. For the first term we also use~\eqref{eqn:gnx} to see that $\|g_n^x\|_\infty=4^n/\deg_n(x)$, and obtain
\begin{align}
	\bigl| \langle (\partial +ia)f,i(a-a_n)g_n^x\rangle_{\Hil}\bigr|
	&\leq \bigl(\DF^a(f) \bigr)^{1/2} \sum_{w\in W_n^x} \|g_n^x \|_\infty \bigl\|(a-a_n) \mathds{1}_{F_w(X)} \bigr\|_\Hil \notag\\
	&\leq 4^n \bigl(\DF^a(f) \bigr)^{1/2} \sup_{|w|=n}  \bigl\|(a-a_n) \mathds{1}_{F_w(X)} \bigr\|_\Hil.\label{eqn:cvgeofgraphappwithgn2}
	\end{align}
The second term has one extra simplification, because the same reasoning as in~\eqref{eqn:gnxanharmonicity} shows that
\begin{equation*}
	\bigl\| (\partial +ia_n) g_n^x \bigr\|_\Hil
	= \bigl( \DF^{a_n} (g_n^x)\bigr)^{1/2} = \frac{4^n}{\deg_n(x)} \bigl( \DF (e^{iA_n^x}g_n^x) \bigr)^{1/2} = \frac{4^n}{\deg_n(x)} \bigl(\DF(h_n^x)\bigr)^{1/2}
	\end{equation*}
and on each cell $F_w(X)$ the contribution to $\DF(h_n^x)$ is $1$, so $ \bigl\| (\partial+ia_n) g_n^x \mathds{1}_{F_w(X)}\bigr\|_\Hil\leq 4^n/\deg_n(x)$.  Writing the same cellular decomposition as before we have
\begin{align}
	 \bigl| \langle i(a-a_n)f, (\partial+ia_n) g_n^x \rangle_{\Hil} \bigr|
	&\leq  \sum_{w\in W_n^x} \|f\|_\infty \bigl\|(a-a_n) \mathds{1}_{F_w(X)} \bigr\|_\Hil  \bigl\| (\partial+ia_n) g_n^x \mathds{1}_{F_w(X)}\bigr\|_\Hil \notag\\
	&\leq \frac{4^n\|f\|_\infty}{\deg_n(x)} \sum_{w\in W_n^x}\bigl\|(a-a_n) \mathds{1}_{F_w(X)} \bigr\|_\Hil  \notag\\
	&\leq 4^n \|f\|_\infty \sup_{|w|=n} \bigl\|(a-a_n) \mathds{1}_{F_w(X)} \bigr\|_\Hil, \label{eqn:cvgeofgraphappwithgn3}
	\end{align}
and then combining~\eqref{eqn:cvgeofgraphappwithgn1}, \eqref{eqn:cvgeofgraphappwithgn2} and~\eqref{eqn:cvgeofgraphappwithgn3} yields \begin{equation*}
	\bigl| \DF^a(f,g_n^x) - \DF^{a_n}_n(f,g_n^x) \bigr| 
	\leq 4^n  \Bigl( \bigl(\DF^a(f) \bigr)^{1/2} + \|f\|_\infty\Bigr)  \sup_{|w|=n} \bigl\|(a-a_n) \mathds{1}_{F_w(X)} \bigr\|_\Hil 
	\end{equation*}
whereupon the result follows by the hypothesis on the convergence of $a_n$ to $a$ made in Theorem~\ref{thm:findomMaga}.
\end{proof}

\section{Spectral Decimation on \DLF\ graphs}\label{sec:specdec}

In this section we show that for a special class of fields the spectrum and eigenfunctions of $\Mag^{a_m}_m$ are related to those of $\Mag^{a_{m-1}}_{m-1}$.  For this purpose it will be convenient to define $\Mag^{a_m}_n$ by~\eqref{eqn:defnofMagam} for all $x\in V_n$, not just those in $V_n\setminus V_0$.  We will do so in this section except when otherwise noted.

We begin by making a closer examination of the local structure of $\Mag^{a_m}_m$.  Our main result in this regard is~\eqref{eqn:celldecompofMagam3}, which is a decomposition of the operator into a sum over $(m-1)$-cells of gauge transformed copies of magnetic operators on $V_1$.  With this in hand we review some well-known results on spectral similarity and Schur complement and apply them to magnetic operators on $V_1$. The results suggest that $\Mag^{a_m}_m$ should be spectrally similar to  $\Mag^{a_{m-1}}_{m-1}$ if the fluxes through all cells of a given scale are the same.  We prove this in Theorem~\ref{thm:Specsimonelevel} using a gluing lemma for spectral similarity (Lemma~\ref{lem:gluing}) that generalizes a similar result from~\cite{MT}.

\subsection*{Local structure of Graph Magnetic Operators}

The gauge transformations introduced in the previous section correspond to conjugation by diagonal unitary transformations, at least locally. The simplest case occurs when the scale is zero.  For example, if $g=0$ on $V_0$ (Dirichlet boundary conditions) then for any $n$
\begin{align*}
	\langle -\Mag^{a_0}_n f,g\rangle_{l^2(\mu_n)}
	= \DF^{a_0}_n (f,g)
	&=\DF_n (e^{iA_0}f,e^{iA_0}g)\\
	&=\langle -\Delta_n  e^{iA_0}f, e^{iA_0}g \rangle_{l^2(\mu_n)}
	= \langle e^{-iA_0} (-\Delta_n) e^{iA_0}f, g \rangle_{l^2(\mu_n)},
	\end{align*}
so that $\Mag^{a_0}_n$ is obtained from  $\Delta_n$ by conjugation with the unitary diagonal transformation $e^{iA_0}$. In particular $\Mag^{a_0}_n$ and  $\Delta_n$  have the same eigenvalues and $f$ is an eigenvector of $\Mag^{a_0}_n$ if and only if $fe^{iA_0}$ is an eigenvector of $\Delta_n$.

For $\Mag^{a_m}_m$ the situation is more complicated because we have only  local gauge transformations.  We must therefore  conjugate by a different operator on each cell and the result is not globally unitary. Moreover when converting from $\DF^{a_m}_m$ to $\Mag^{a_m}_m$ we must keep track of the fact that each edge belongs to a unique cell, but the sum~\eqref{eqn:defnofMagam} defining $\Mag^{a_m}_m$ involves terms from more than one cell.

It is convenient to deal with this by introducing operators as follows. For a word $w$ with $|w|=m$ let
$R_w f=f\circ F_w$ map functions on $V_{m+n}$ to functions on $V_n$ and
\begin{equation}\label{eqn:defnofLw}
	L_{w,m+n} f (x)= \begin{cases}
	\frac{1}{\deg_{m+n}(x)} \bigl( f\circ F_w^{-1}(x)\bigr) &\text{if } x\in F_w(V_n) \\
	0 &\text{if } x\in V_{m+n}\setminus F_w(V_n)
	\end{cases}
	\end{equation}
map functions on $V_n$ to functions on $V_{m+n}$.  We will only need the cases $n=0,1$, and initially we look only at $n=0$.  Let $D=\Bigl(\begin{smallmatrix} 1&-1\\-1&1\end{smallmatrix}\Bigr)$ act on functions on $V_0$ and observe that 
\begin{equation*}
	-\DF_{m,w}(f,g)
	=\langle D(f\circ F_w),g\circ F_w\rangle_{l^2(\mu_0)}
	= \langle D R_w f, R_w g \rangle_{l^2(\mu_0)}
	= \langle L_{w,m} D R_w f,  g \rangle_{l^2(\mu_m)}
\end{equation*}
so from~\eqref{eqn:DFamngauge} when $g=0$ on $V_0$
\begin{align*}
	\langle \Mag^{a_m}_m f,g\rangle_{l^2(\mu_m)}
	= -\DF^{a_m}_m(f,g)
	&= \sum_{|w|=m} -\DF_{m,w}\bigl(e^{iA_w}f,e^{iA_w}g\bigr)\\
	&= \sum_{|w|=m} \left\langle L_{w,m} DR_w\bigl(e^{iA_w}f\bigr) , e^{iA_w}g\right\rangle_{l^2(\mu_m)}\\
	&= \sum_{|w|=m} \left\langle \bigl(T_{-A_w}L_{w,m} D R_wT_{A_w}\bigr)f , g \right\rangle_{l^2(\mu_m)}
	\end{align*}
where we have written $T_{A_w}$ for the operator of pointwise multiplication by $e^{iA_w}$.  This gives a cell decomposition of $\Mag^{a_m}_m$ at points in $V_m\setminus V_0$:
\begin{equation}\label{eqn:celldecompofMagam}
	\Mag^{a_m}_m = \sum_{|w|=m} T_{-A_w}L_{w,m} D R_wT_{A_w}.
	\end{equation}

This decomposition suggests breaking our magnetic field into pieces that act as gauge transformations at different scales.  Let $\K_m$  be the orthogonal complement  of $\Hil_{m-1}$ in $\Hil_m$ for $m\geq1$ and let $\K_0=\Hil_0$, so $\Hil_m=\oplus_0^m\K_j$.  Recall that $\cup_m \Hil_m$ is dense in $\Hil$, so $\Hil=\oplus_0^\infty\K_m$, and (abusing notation slightly) that each $\K_j$, $j\geq1$ may be decomposed as $\oplus_{|w|=j-1}\K_w$ where each $\K_w$ is isomorphic to $\K_1$ via the map $F_w$.  Using the identification of $\K_1$ with functions on the directed scale~$1$ graph we see that $\K_1$ is one-dimensional with basis element a non-exact $1$-form.  A symmetric such basis element is shown in Figure~\ref{fig:oneloopbasiselt}, as is a typical element of $\K_2$ obtained as a linear combination of copies of this basis element on the $2$-scale cells.  We call the symmetric basis element $k\in\K_1$ and let $k_w=k\circ F_w^{-1}$ be the corresponding basis element for $\K_w$.

\begin{figure}[htb]
\centering
\begingroup%
  \makeatletter%
  \providecommand\color[2][]{%
    \errmessage{(Inkscape) Color is used for the text in Inkscape, but the package 'color.sty' is not loaded}%
    \renewcommand\color[2][]{}%
  }%
  \providecommand\transparent[1]{%
    \errmessage{(Inkscape) Transparency is used (non-zero) for the text in Inkscape, but the package 'transparent.sty' is not loaded}%
    \renewcommand\transparent[1]{}%
  }%
  \providecommand\rotatebox[2]{#2}%
  \ifx\svgwidth\undefined%
    \setlength{\unitlength}{202.87297177bp}%
    \ifx\svgscale\undefined%
      \relax%
    \else%
      \setlength{\unitlength}{\unitlength * \real{\svgscale}}%
    \fi%
  \else%
    \setlength{\unitlength}{\svgwidth}%
  \fi%
  \global\let\svgwidth\undefined%
  \global\let\svgscale\undefined%
  \makeatother%
  \begin{picture}(1,0.44947548)%
    \put(0,0){\includegraphics[width=\unitlength,page=1]{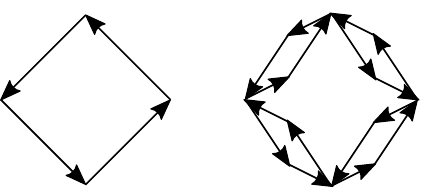}}%
    \put(-0.15402302,0.50525382){\color[rgb]{0,0,0}\makebox(0,0)[lt]{\begin{minipage}{0.49633036\unitlength}\raggedright \end{minipage}}}%
    \put(0.070,0.33){\color[rgb]{0,0,0}\makebox(0,0)[lb]{\smash{$1$}}}%
    \put(0.06,0.08){\color[rgb]{0,0,0}\makebox(0,0)[lb]{\smash{$1$}}}%
    \put(0.31,0.08){\color[rgb]{0,0,0}\makebox(0,0)[lb]{\smash{$1$}}}%
    \put(0.31581774,0.33){\color[rgb]{0,0,0}\makebox(0,0)[lb]{\smash{$1$}}}%
    \put(0.58,0.08281537){\color[rgb]{0,0,0}\makebox(0,0)[lb]{\smash{$\alpha_3$}}}%
    \put(0.68248874,0){\color[rgb]{0,0,0}\makebox(0,0)[lb]{\smash{$\alpha_3$}}}%
    \put(0.73965035,0.09){\color[rgb]{0,0,0}\makebox(0,0)[lb]{\smash{$\alpha_3$}}}%
    \put(0.67,0.18){\color[rgb]{0,0,0}\makebox(0,0)[lb]{\smash{$\alpha_3$}}}%
    \put(0.82,0){\color[rgb]{0,0,0}\makebox(0,0)[lb]{\smash{$\alpha_4$}}}%
    \put(0.93,0.08281537){\color[rgb]{0,0,0}\makebox(0,0)[lb]{\smash{$\alpha_4$}}}%
    \put(0.855,0.19){\color[rgb]{0,0,0}\makebox(0,0)[lb]{\smash{$\alpha_4$}}}%
    \put(0.8,0.125){\color[rgb]{0,0,0}\makebox(0,0)[lb]{\smash{$\alpha_4$}}}%
    \put(0.95,0.30){\color[rgb]{0,0,0}\makebox(0,0)[lb]{\smash{$\alpha_1$}}}%
    \put(0.86,0.39){\color[rgb]{0,0,0}\makebox(0,0)[lb]{\smash{$\alpha_1$}}}%
    \put(0.78,0.325){\color[rgb]{0,0,0}\makebox(0,0)[lb]{\smash{$\alpha_1$}}}%
    \put(0.855,0.228){\color[rgb]{0,0,0}\makebox(0,0)[lb]{\smash{$\alpha_1$}}}%
    \put(0.72,0.28){\color[rgb]{0,0,0}\makebox(0,0)[lb]{\smash{$\alpha_2$}}}%
    \put(0.685,0.41){\color[rgb]{0,0,0}\makebox(0,0)[lb]{\smash{$\alpha_2$}}}%
    \put(0.58,0.31){\color[rgb]{0,0,0}\makebox(0,0)[lb]{\smash{$\alpha_2$}}}%
    \put(0.67,0.22){\color[rgb]{0,0,0}\makebox(0,0)[lb]{\smash{$\alpha_2$}}}%
  \end{picture}
\endgroup%

\caption{A graph $1$-form that spans $\K_1$ (left), and a typical element of $\K_2$ (right).}\label{fig:oneloopbasiselt}
\end{figure}

Note that if we  decompose  $\Mag^{\beta k}_1$ according to~\eqref{eqn:celldecompofMagam} then the directed graph function is $\beta$ on each edge, so the gauge operation on each edge is multiplication by $e^{iA}$ with $A=0$ at the source vertex of the directed edge and $A=\beta$ at the target vertex.  Then for each $j$, 
\begin{equation*}
	T_{-A_j}L_{j,1} DR_j T_{A_j}
	=L_{j,1} T_{-A} D T_{A} R_j
	= L_{j,1} \Bigl(\begin{smallmatrix} 1&-e^{i\beta} \\-e^{-i\beta} &1\end{smallmatrix}\Bigr) R_j
	\end{equation*}
Hence we may write a matrix for $\Mag^{\beta k}_1$ with the first two rows corresponding to the vertices in $V_0$ and the second two to those in $V_1\setminus V_0$ as follows,
\begin{equation}\label{eqn:Magbetak1}
	\Mag^{\beta k}_1 = \sum_j L_{j,1} T_{-A}DT_A R_j
	= \frac{1}{2}\begin{bmatrix}
	2 & 0 &   -e^{i\beta} & - e^{-i\beta} \\
	0 & 2 &  - e^{-i\beta} & -e^{i\beta} \\
	 -e^{-i\beta} & -e^{i\beta} & 2 & 0 \\
	 -e^{i\beta} & -e^{-i\beta}  & 0 &2 \\
	\end{bmatrix}
	\end{equation}
where the factor $\frac{1}{2}$ comes from $\deg_1(x)=2$ for all $x\in V_1$.

From the decomposition  $\Hil_m=\Hil_{m-1}\oplus_{|w|=m-1} \K_w$ write $a_m=a_{m-1}+ \sum_{|w|=m-1} \beta_w k_w $.  For each cell $F_{w}(X)$ with $|w|=m-1$ and each subcell $F_{wj}(X)$ we have harmonic functions $A_w$, $B_{wj}$ such that $a_{m-1}\circ F_w=\partial A_w$ and $\beta_w k_w\circ F_{wj}=\partial B_{wj}$.   The gauge map on $F_{wj}(X)$ is $T_{A_{wj}}=T_{A_w}T_{B_{wj}}$, so~\eqref{eqn:celldecompofMagam} becomes
\begin{equation}\label{eqn:celldecompofMagam2}
	\Mag^{a_m}_m
	=\sum_{|w|=m-1}  T_{-A_w} \Bigl( \sum_{j=1}^4 T_{-B_{wj}} L_{wj,m} DR_{wj} T_{B_{wj}}\Bigr) T_{A_w}.
	\end{equation}
However the same argument used in the computation of~\eqref{eqn:Magbetak1} shows that for each $w$ there is  $B_w$ such that $T_{-B_{wj}}L_{wj,m}=L_{wj,m}T_{-B_w}$ and $R_{wj}T_{B_{wj}}=T_{B_{w}}R_{wj}$ independent of $j$.  Moreover 
we can decompose
\begin{equation*}
	R_{wj}f=f\circ F_{wj}=f\circ F_w\circ F_j=R_j R_w f
	\end{equation*}
and for $x\in V_m$
\begin{equation*}
	L_{wj,m}f(x)= \frac{1}{\deg_m(x)} f \circ F_{wj}^{-1}
	=  \frac{2}{\deg_1(F_w^{-1}(x))\deg_{m}(x)} f \circ F_{j}^{-1}\circ F_{w}^{-1}
	= 2 L_{w,m} L_{j,1}.
	\end{equation*}
because all $y\in V_1$ have $\deg_1(y)=2$.

Note that in both of these expressions we are using the case $n=1$ of the definition of $R_w$ and $L_w$, meaning that they are considered as operators from functions on $V_{m}$ to functions on $V_1$ and conversely.

Using the above simplifications we conclude from~\eqref{eqn:celldecompofMagam2}  and~\eqref{eqn:Magbetak1} that
\begin{align}
	\Mag^{a_m}_m
	&= 2\sum_{|w|=m-1}  T_{-A_w} L_{w,m} \Bigl( \sum_{j=1}^4 L_{j,1} T_{-B_w} D T_{B_w}R_j \Bigr) R_w T_{A_w} \notag\\  
	&= 2\sum_{|w|=m-1}  T_{-A_w} L_{w,m} \Mag^{\beta_w k}_1 R_w T_{A_w}.\label{eqn:celldecompofMagam3}
	\end{align}
which decomposes $\Mag^{a_m}_m$ as a sum over $(m-1)$-cells of copies of magnetic operators on $V_1$, each of which is located at $F_w(V_1)\subset V_m$ and gauge transformed by $T_{A_w}$.

\subsection*{Spectral similarity}
The notion of spectral similarity we use is from~\cite{MT}, see also~\cite{RammalToulouse,FukushimaShima,Shima,MT2}, and is defined as follows.  

\begin{defn}
Let $\U$ and $\U_0$ be Hilbert spaces and  $U:\U_0\to\U$ be an isometry. Two bounded linear operators $M$ on $\U$ and $M_0\neq I$ on $\U_0$ are spectrally similar if there is a non-empty open $\Lambda\subset\mathbb{C}$ and functions $\phi_0,\phi_1:\Lambda\to\mathbb{C}$ such that
\begin{equation}\label{eqn:specsimdefn}
	U^* (M-z)^{-1} U = \bigl( \phi_0(z) M_0 - \phi_1(z) \bigr)^{-1}
	\end{equation}
at all $z\in\Lambda$.  If $\phi_0(z)\neq0$ we write $R(z)=\phi_1(z)/\phi_0(z)$ and note that
\begin{equation}\label{eqn:defnofR}
	U^* (M-z)^{-1} U = \frac{1}{\phi_0(z)} \bigl( M_0 - R(z) \bigr)^{-1}.
	\end{equation}
\end{defn}

By identifying $\U_0$ with the closed subspace  $U(\U_0)\subset\U$ one may characterize spectral similarity using the Schur complement.  This is done in~\cite{MT} by considering the case $\U_0\subset\U$, letting $\U_1$ denote its orthogonal complement and $P_j$ the projection $\U\to\U_j$ for $j=0,1$.  This permits a decomposition of $M$ into blocks
\begin{equation}\label{eqn:Masblocks}
	M=\begin{pmatrix} S & \tilde{X} \\ X & Q \end{pmatrix}
	\end{equation}
by setting $S= P_{0} M P_{0}$, $\tilde{X}= P_1 M P_0$,  $X = P_0 M P_1 $, $Q = P_1 M P_1$.  Note, too, that if $M=M^*$ then $\tilde{X}=X^*$.   With this notation, and writing  $\rho(A)$ for the resolvent set of an operator $A$, the following results are from 
Lemma~3.3, Corollary~3.4, Theorem~3.6 and Proposition~3.7 of~\cite{MT}.

\begin{thm}[{\protect \cite{MT}}]\label{thm:MT}\ 
\begin{enumerate}
\item For $z\in\rho(M)\cap\rho(Q)$,  $M$ and $M_0$ satisfy~\eqref{eqn:specsimdefn} if and only if
\begin{equation}\label{eqn:schurcondit}
 S-z - \tilde{X}(Q-z)^{-1}X = \phi_0(z)  M_0 - \phi_1(z) I
\end{equation}
\item If $M$ and $M_0$  satisfy~\eqref{eqn:specsimdefn} then $\phi_0$ and $\phi_1$ have unique analytic extensions to a connected component of $\rho(Q)$ and these extensions satisfy~\eqref{eqn:schurcondit}.  In the special case where the spectrum $\sigma(Q)$ of $Q$ consists only of isolated eigenvalues the $\phi_{j}(z)$ extend to be meromorphic on $\mathbb{C}$ with poles in $\sigma(Q)$.
\item If $M$ is spectrally similar to $M_0$ and $z\in\rho(Q)$ has $\phi_0(z)\neq0$ then
\begin{enumerate}
\item $R(z)\in\rho(M_0)$ if and only if $z\in \rho(M)$.
\item $R(z)$ is an eigenvalue of $M_0$ if and only if $z$ is an eigenvalue of $M$. There is a bijective map from the eigenspace of $M_0$ corresponding to $R(z)$ to the eigenspace of $M$ corresponding to $z$, with formula
\begin{equation}\label{eqn:efnextension}
	f_0 \mapsto f = \bigl( I-(Q-z)^{-1} X\bigr) f_0.
	\end{equation}
\end{enumerate}
\end{enumerate}
\end{thm}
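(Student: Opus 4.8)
The unifying device for all three parts is the block $LDU$ (Aitken) factorization of $M-z$ with respect to the orthogonal splitting $\U=\U_0\oplus\U_1$ appearing in~\eqref{eqn:Masblocks}. First I would record that for every $z\in\rho(Q)$ the Schur complement $G(z)=S-z-\tilde{X}(Q-z)^{-1}X$ is a well-defined bounded operator on $\U_0$ and that
\[
	M-z=\begin{pmatrix} I & \tilde{X}(Q-z)^{-1}\\ 0 & I\end{pmatrix}\begin{pmatrix} G(z) & 0\\ 0 & Q-z\end{pmatrix}\begin{pmatrix} I & 0\\ (Q-z)^{-1}X & I\end{pmatrix}.
\]
Since the two triangular factors are bounded and invertible with triangular (hence bounded) inverses, this identity yields the two facts on which everything rests: for $z\in\rho(Q)$ one has $z\in\rho(M)$ if and only if $G(z)$ is invertible, and in that case $U^{*}(M-z)^{-1}U=G(z)^{-1}$ is the $(0,0)$-block of the resolvent. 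Carrying out this block algebra in the infinite-dimensional setting requires checking boundedness and that invertibility genuinely passes through the factorization, but this is routine once the factors are identified.

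Part (1) is then immediate. For $z\in\rho(M)\cap\rho(Q)$ the left-hand side of~\eqref{eqn:specsimdefn} equals $G(z)^{-1}$, so~\eqref{eqn:specsimdefn} holds exactly when $G(z)^{-1}=\bigl(\phi_0(z)M_0-\phi_1(z)\bigr)^{-1}$; as the left-hand side is the inverse of an invertible operator, this is equivalent to $\phi_0(z)M_0-\phi_1(z)$ being invertible and equal to $G(z)$, which is precisely~\eqref{eqn:schurcondit}. The converse direction is obtained by inverting~\eqref{eqn:schurcondit}.

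Part (2) is the substantive step and the main obstacle. Here I would use that $z\mapsto(Q-z)^{-1}$, and hence $z\mapsto G(z)$, is an operator-valued analytic function on all of $\rho(Q)$. On the open set $\Lambda$---which, shrinking it if necessary and using part (1), we may take to meet $\rho(Q)$---we have $G(z)=\phi_0(z)M_0-\phi_1(z)I$. The key point, and where the hypothesis $M_0\neq I$ enters, is that $M_0$ and $I$ are linearly independent, so one can choose bounded functionals on operators that separate the coefficients of $M_0$ and $I$; applying them to $G(z)$ exhibits $\phi_0$ and $\phi_1$ as scalar analytic functions on $\rho(Q)$ that agree with the given $\phi_j$ on $\Lambda$. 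Uniqueness of the continuation to a connected component of $\rho(Q)$ is then the identity theorem for analytic functions. When $\sigma(Q)$ consists of isolated eigenvalues, $(Q-z)^{-1}$ continues meromorphically to $\mathbb{C}$ with poles precisely at those eigenvalues, so $G(z)$ and therefore $\phi_0,\phi_1$ inherit the same meromorphic continuation with poles in $\sigma(Q)$.

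Part (3) follows by rewriting~\eqref{eqn:schurcondit}, valid on $\rho(Q)$ by part (2), as $G(z)=\phi_0(z)\bigl(M_0-R(z)\bigr)$ and using $\phi_0(z)\neq0$. For (a), $G(z)$ is invertible if and only if $M_0-R(z)$ is, so the invertibility equivalence of the first paragraph gives $z\in\rho(M)\Leftrightarrow R(z)\in\rho(M_0)$. For (b), I would write an eigenvector $f=f_0+f_1$ of $M$ at $z$ in its $\U_0$- and $\U_1$-components: the $\U_1$-equation gives $f_1=-(Q-z)^{-1}Xf_0$ (legitimate since $z\in\rho(Q)$), and substituting into the $\U_0$-equation yields $G(z)f_0=\phi_0(z)(M_0-R(z))f_0=0$, so $f_0$ is an eigenvector of $M_0$ at $R(z)$; moreover $f_0\neq0$, since $f_0=0$ would force $f_1=0$. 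Conversely, given an eigenvector $f_0$ of $M_0$ at $R(z)$, the vector $f=\bigl(I-(Q-z)^{-1}X\bigr)f_0$ of~\eqref{eqn:efnextension} satisfies both block equations and has $P_0f=f_0\neq0$, hence lies in the eigenspace of $M$ at $z$. This assignment is injective because $P_0f=f_0$, and the preceding computation shows every eigenvector of $M$ arises this way, so it is the asserted bijection of eigenspaces.
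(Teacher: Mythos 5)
The paper does not prove this theorem: it is imported verbatim from Lemma~3.3, Corollary~3.4, Theorem~3.6 and Proposition~3.7 of the cited reference [MT], so there is no in-paper argument to compare against. Your proof is correct and is in substance the standard one underlying those cited results: the block $LDU$ factorization of $M-z$ identifies $U^*(M-z)^{-1}U$ with $G(z)^{-1}$ for $z\in\rho(M)\cap\rho(Q)$, which gives (1); analyticity of $z\mapsto G(z)$ on $\rho(Q)$ plus coefficient separation gives (2); and the two block equations for an eigenvector give the bijection in (3)(b), with $f_0\neq 0$ forced since $f_0=0$ implies $f_1=0$. The only wrinkle worth flagging is in your step for (2): separating $\phi_0$ from $\phi_1$ requires $M_0$ and $I$ to be linearly independent, i.e.\ $M_0\neq cI$ for every scalar $c$, which is slightly stronger than the stated hypothesis $M_0\neq I$; this is an artifact of the paper's paraphrase of [MT] rather than a defect of your argument, and it is harmless in the application here, where $M_0$ is a graph Laplacian. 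Similarly, the continuation in (2) needs $\Lambda$ to meet $\rho(Q)$ so that there is an open set on which both \eqref{eqn:specsimdefn} and the Schur complement are defined; you note this in passing and it holds in all the uses made in the paper.
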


A more precise analysis of what occurs in the case $\phi_0(z)=0$ or $z\not\in\rho(Q)$ is possible, see~\cite{BajorinChen}, but we will not need general results of this type because it will be easy to deal with these cases on the \DLF\  by direct arguments.

We illustrate the above notions with some computations that are extremely pertinent to the \DLF, namely the reduction of a magnetic operator on a diamond to an operator on a line segment.  Let $\U$ be the space of complex-valued functions on the vertices of the diamond and $\U_0$ be the space of functions on two opposite vertices, which we think of as a subspace of $\U$.

\begin{eg}\label{eg:simplefield}
 If the field through the hole in the diamond has magnitude $4\beta$ then the corresponding magnetic operator $M$ is that  in~\eqref{eqn:Magbetak1}, so may be written as in~\eqref{eqn:Masblocks} with
\begin{align*}
	S=Q= I, && X=\frac{-1}{2}\begin{bmatrix} e^{i\beta} &e^{-i\beta} \\ e^{-i\beta} & e^{i\beta} \end{bmatrix}, && \tilde{X}=X^*.
	\end{align*}

\begin{figure}[htb]
\centering
\begingroup%
  \makeatletter%
  \providecommand\color[2][]{%
    \errmessage{(Inkscape) Color is used for the text in Inkscape, but the package 'color.sty' is not loaded}%
    \renewcommand\color[2][]{}%
  }%
  \providecommand\transparent[1]{%
    \errmessage{(Inkscape) Transparency is used (non-zero) for the text in Inkscape, but the package 'transparent.sty' is not loaded}%
    \renewcommand\transparent[1]{}%
  }%
  \providecommand\rotatebox[2]{#2}%
  \ifx\svgwidth\undefined%
    \setlength{\unitlength}{12cm}%
    \ifx\svgscale\undefined%
      \relax%
    \else%
      \setlength{\unitlength}{\unitlength * \real{\svgscale}}%
    \fi%
  \else%
    \setlength{\unitlength}{\svgwidth}%
  \fi%
  \global\let\svgwidth\undefined%
  \global\let\svgscale\undefined%
  \makeatother%
  \begin{picture}(1,0.20840148)%
    \put(0,0){\includegraphics[width=\unitlength,page=1]{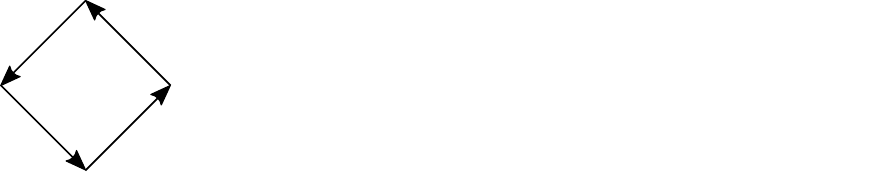}}%
    \put(0.028,0.155){\color[rgb]{0,0,0}\makebox(0,0)[lb]{\smash{$\beta$}}}%
    \put(0.023,0.031){\color[rgb]{0,0,0}\makebox(0,0)[lb]{\smash{$\beta$}}}%
    \put(0.153,0.031){\color[rgb]{0,0,0}\makebox(0,0)[lb]{\smash{$\beta$}}}%
    \put(0.158,0.155){\color[rgb]{0,0,0}\makebox(0,0)[lb]{\smash{$\beta$}}}%
    \put(0,0){\includegraphics[width=\unitlength,page=2]{simplefield.pdf}}%
    \put(0.348,0.108){\color[rgb]{0,0,0}\makebox(0,0)[lb]{\smash{0}}}%
    \put(0,0){\includegraphics[width=\unitlength,page=3]{simplefield.pdf}}%
    \put(0.545,0.155){\color[rgb]{0,0,0}\makebox(0,0)[lb]{\smash{$\beta+\alpha$}}}%
    \put(0.535,0.031){\color[rgb]{0,0,0}\makebox(0,0)[lb]{\smash{$\beta-\alpha$}}}%
    \put(0.715,0.031){\color[rgb]{0,0,0}\makebox(0,0)[lb]{\smash{$\beta-\alpha$}}}%
    \put(0.715,0.155){\color[rgb]{0,0,0}\makebox(0,0)[lb]{\smash{$\beta+\alpha$}}}%
    \put(0,0){\includegraphics[width=\unitlength,page=4]{simplefield.pdf}}%
    \put(0.89,0.108){\color[rgb]{0,0,0}\makebox(0,0)[lb]{\smash{$2\alpha$}}}%
  \end{picture}%
\endgroup%
\caption{Reducing two different magnetic fields on a single diamond}\label{fig:simplemagfield}
\end{figure}

The diagram on the left in Figure~\ref{fig:simplemagfield}  illustrates the field corresponding to $M$ by showing the $1$-form as a function on the directed edges.  The operator $M_0$ is just the discrete Laplacian on the two vertices, and the corresponding $1$-form has zero change on the edge.
\begin{equation*}
	M_0=\begin{bmatrix}1&-1\\-1&1\end{bmatrix}.
	\end{equation*}
Then
\begin{align*}
	 S-z - \tilde{X}(Q-z)^{-1}X
	& =\frac{1}{2(1-z)}\begin{bmatrix}
	2(1-z)^2 -1 & -\cos 2\beta \\
	-\cos 2\beta & 2(1-z)^2 -1
	\end{bmatrix}\\
	&= \frac{\cos 2\beta }{2(1-z)} M_0 - \frac{(-2z^2+4z- 1+\cos 2\beta)}{2(1-z)}I
	\end{align*}
so that~\eqref{eqn:schurcondit} holds, though the functions $\phi_j$  depend on the strength of the field.
\end{eg}

\begin{eg}\label{eg:lesssimplefield}
We also consider what occurs if we reduce a gauge equivalent field in the same manner.  The next simplest example of this kind is the function on directed edges shown on the right in Figure~\ref{fig:simplemagfield}, with a different difference along the top path than along the bottom. 
It  differs from Example~\ref{eg:simplefield} only in that 
\begin{equation*}
	X=\frac{-1}{2} \begin{bmatrix}e^{i(\beta+\alpha)}& e^{-i(\beta+\alpha)} \\ e^{-i(\beta-\alpha)}& e^{i(\beta-\alpha)} \end{bmatrix}.
	\end{equation*}
For this situation we compute
\begin{align*}
	 S-z - \tilde{X}(Q-z)^{-1}X
	& =\frac{1}{2(1-z)}\begin{bmatrix}
	2(1-z)^2 -1 & -e^{-2i\alpha} \cos 2\beta \\
	-e^{i2\alpha} \cos 2\beta & 2(1-z)^2 -1
	\end{bmatrix}\\
	&= \frac{\cos 2\beta}{2(1-z)} V M_0  V^* - \frac{(-2z^2+4z- 1+\cos 2\beta)}{2(1-z)}I
	\end{align*}
where $M_0$ is as before but
\begin{align*}
	V=\begin{bmatrix}
	e^{-i\alpha} &0 \\
	0& e^{i\alpha}
	\end{bmatrix}
	&&\text{ and therefore }
	&&V M_0 V^*  =\begin{bmatrix}
	1 & -e^{-i2\alpha} \\
	-e^{i2\alpha} & 1
	\end{bmatrix} 
	\end{align*}
is a gauge transform (by $V$) of $M_0$.  Notice that $V$ is the transform for the gauge field shown in the rightmost diagram in Figure~\ref{fig:simplemagfield}, which may be thought of as the net field obtained by tracing our original $1$-form to the two vertices of the unit interval.  As before, the functions $\phi_0$ and $\phi_1$ depend only on $z$ and the strength of the field through the hole, which in this case is $4\beta$.  The spectral similarity relation does not depend on the gauge field, all of which is accounted for in $V$.
\end{eg}

In combination with the gluing principle described next, the spectral similarity observed in Example~\ref{eg:lesssimplefield}  suggests that if the field has the same flux through all scale~$m$ holes then $\Mag^{a_m}_m$ should be spectrally similar to $\Mag^{a_{m-1}}_{m-1}$.  To prove this we need a result on gluing spectrally similar operators.

\subsection*{Gluing spectrally similar operators}

One of the most useful and well-known features of spectral similarity on self-similar graphs is the following fact: if there are operators $M^w$, each of which is spectrally similar to an operator $M^w_0$ via functions $\phi_0$ and $\phi_1$ that do not depend on~$w$, then there is a way to combine the $M^w$ such that the result is spectrally similar to $M^w_0$.  Moreover the way of combining the $M^w$ corresponds to a certain gluing operation on graphs.   The standard gluing lemma of this type is Lemma~3.10 of~\cite{MT}, but unfortunately it is not sufficient for our purposes.  Instead we prove the following closely related but more general result.  Note that in this lemma and its proof we simplify the notation by omitting many inclusion operators.

\begin{lem}\label{lem:gluing}
Let $\U=\U_0\oplus\U_1$ be a Hilbert space and $P_j$ denote projection onto $\U_j$.   Let $\U^w\subset\U$ be a collection of subspaces such that $\U=\sum_w\U^w$, define $\U^w_j=P_j\U_w$ for $j=0,1$, and let $P_j^w:\U^w\to\U^w_j$ denote the projections on these subspaces.
Suppose there are operators $M^w$ on $\U^w$ and $M^w_0$ on $\U^w_0$ that are spectrally similar with $U=U^*=P_0$ and functions $\phi_0$, $\phi_1$ that are independent of $w$  (and hence $M^w$ satisfies~\eqref{eqn:schurcondit} for each $w$).  If there are operators $\L^w:\U^w\to\U$ and $\R^w:U\to\U^w$ such that the following hypotheses hold:
\begin{enumerate}
\item \label{item:gluing1} For all $w$
\begin{align*}
	P_0 \L^w = \L^w P_0^w, &&  \R^wP_0 = P_0^w \R^w, &&  P_1 \L^w  =  \L^w P_1^w, &&  \R^w P_1=  P_1^w \R^w.
	\end{align*}
\item \label{item:gluing2}  For all $w$ and $w'\neq w$: $\R^w P_1 \L^w = P_1^w$ and $\R^{w'}P_1 \L^w=0$.
\item \label{item:gluing3} $P_0 = \sum_w \L^w P_0^w  \R^w$ and $P_1 = \sum_w \L^w P_1^w  \R^w$.
\end{enumerate}
Then $M=\sum_{w}\L^w M^w\R^w$ is spectrally similar to $M_0 = \sum_{w} \L^w  M_0^w  \R^w$ with the same functions $\phi_{0}(z)$ and $\phi_1(z)$.
\end{lem}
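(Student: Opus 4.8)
The plan is to verify the Schur-complement criterion \eqref{eqn:schurcondit} of Theorem~\ref{thm:MT}(1) for the pair $(M,M_0)$ with the \emph{given} functions $\phi_0,\phi_1$. Since each $M^w$ is spectrally similar to $M_0^w$ with these same functions, Theorem~\ref{thm:MT}(1) already supplies, for $z$ in a nonempty open $\Lambda$ on which every $(Q^w-z)^{-1}$ exists, the per-cell identities
\[
	S^w - z - \tilde X^w (Q^w-z)^{-1} X^w = \phi_0(z) M_0^w - \phi_1(z) I_{\U^w_0},
\]
so the whole task is to glue these together. I would begin by computing the block decomposition \eqref{eqn:Masblocks} of $M=\sum_w\L^w M^w\R^w$ relative to $\U=\U_0\oplus\U_1$: using the intertwining relations in hypothesis~\ref{item:gluing1} to slide $P_j$ past $\L^w,\R^w$ and turn them into $P_j^w$, each block becomes a glued sum of the corresponding per-cell block, $S=\sum_w\L^w S^w\R^w$, $Q=\sum_w\L^w Q^w\R^w$, and likewise for $\tilde X,X$. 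The same device applied to hypothesis~\ref{item:gluing3} gives $z\,I_{\U_1}=zP_1=\sum_w\L^w(zP_1^w)\R^w$, hence $Q-z=\sum_w\L^w(Q^w-z)\R^w$ and $S-z=\sum_w\L^w(S^w-z)\R^w$.

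The crux, and the step I expect to be the main obstacle, is to show that the resolvent of $Q$ glues diagonally,
\[
	(Q-z)^{-1}=\sum_w\L^w (Q^w-z)^{-1}\R^w \quad\text{on }\U_1.
\]
I would prove this by multiplying the right-hand side by $Q-z$ and checking the product equals $P_1$. When the two sums are composed, every cross term carries an interior factor $\R^{w'}P_1\L^w$ — the $P_1$ may be inserted because $(Q^w-z)^{\pm1}$ lives on $\U^w_1$, so $\L^w(Q^w-z)^{-1}=\L^w P_1^w(Q^w-z)^{-1}=P_1\L^w(Q^w-z)^{-1}$ by hypothesis~\ref{item:gluing1} — and hypothesis~\ref{item:gluing2} collapses these to $\delta_{w'w}P_1^w$. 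The surviving diagonal terms then reduce to $\sum_w\L^w P_1^w\R^w=P_1$ by hypothesis~\ref{item:gluing3}. The delicate part throughout is the bookkeeping of which projection ($P_j$ versus $P_j^w$) each operator carries, since that is precisely what makes the orthogonality hypothesis~\ref{item:gluing2} applicable.

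With the resolvent identity in hand I would substitute it into $\tilde X(Q-z)^{-1}X$; the two interior products $\R^{w'}P_1\L^w$ and $\R^w P_1\L^{w''}$ again collapse via hypotheses~\ref{item:gluing1}--\ref{item:gluing2}, so only the fully diagonal terms $w'=w=w''$ remain and $\tilde X(Q-z)^{-1}X=\sum_w\L^w\tilde X^w(Q^w-z)^{-1}X^w\R^w$. Combining this with $S-z=\sum_w\L^w(S^w-z)\R^w$ and the per-cell Schur identities gives
\[
	S-z-\tilde X(Q-z)^{-1}X = \phi_0(z)\sum_w\L^w M_0^w\R^w - \phi_1(z)\sum_w\L^w P_0^w\R^w,
\]
and hypothesis~\ref{item:gluing3} identifies the two sums as $M_0$ and $P_0=I_{\U_0}$. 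This is exactly \eqref{eqn:schurcondit} for $(M,M_0)$, so Theorem~\ref{thm:MT}(1) yields the asserted spectral similarity with the same $\phi_0,\phi_1$, completing the argument.
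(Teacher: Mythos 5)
Your proposal is correct and follows essentially the same route as the paper's proof: both verify the Schur-complement criterion~\eqref{eqn:schurcondit} by using hypothesis~\eqref{item:gluing1} to express each block of $M$ as a glued sum of the per-cell blocks, proving the diagonal gluing of $(Q-z)^{-1}$ via hypotheses~\eqref{item:gluing2}--\eqref{item:gluing3}, and collapsing the triple sum in $\tilde X(Q-z)^{-1}X$ to the diagonal before invoking the per-cell Schur identities. No substantive differences to report.
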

\begin{proof}
In light of~\eqref{eqn:schurcondit} we must compute $S-zP_0-\tilde{X}(Q-z)^{-1}X$.  First observe that for $j=0,1$
\begin{align}
	P_j (M-z)  P_j
	&= \sum_w \bigl( P_j \L^w M^w  \R^w P_j -  P_j \L^w  zP_j^w \R^w P_j \bigr) \notag\\
	&=  \sum_w  \L^w P_j^w  (M^w-z) P_j^w \R^w  \notag\\
	&=\begin{cases}
	\sum_w \L^w (S^w-z)  \R^w &\text{ if $j=0$}\\
	\sum_w \L^w (Q^w-z)  \R^w.&\text{ if $j=1$}
	\end{cases}
	\label{eqn:gluing1}
	\end{align}
where the first equality is by the assumption~\eqref{item:gluing3}, the third is from~\eqref{item:gluing1} and the fourth uses the definitions of $S^W$ and $Q^w$.  In the case $j=1$ using assumptions~\eqref{item:gluing2} and~\eqref{item:gluing3} we then compute from~\eqref{eqn:gluing1}
\begin{align*}
	P_1 (M-z) P_1 \Bigl( \sum_w  \L^w (Q^w-z)^{-1}  \R^w \Bigr)
	&= \sum_{w,w'} \L^{w'} (Q^{w'}-z)  \R^{w'} P_1 \L^w (Q^w-z)^{-1}  \R^w\\
	&= P_1 \sum_w \L^w P_1^w \R^w
	= P_1,
\end{align*}
from which 
\begin{equation}
	(Q-z)^{-1} = P_1(M-z)^{-1}P_1=\sum_w  \L^w (Q^w-z)^{-1}  \R^w. \label{eqn:gluing2}
	\end{equation}

We can compute $\tilde{X}=P_0MP_1$ and $X=P_1MP_0$ in a similar fashion:
\begin{gather}
	\tilde{X}
	=P_0 M P_1
	=  \sum_w P_0 \L^w M^w \R^w P_1
	= \sum_w \L^w  P_0^w M^w  P_1^w \R^w  
	= \sum_w \L^w \tilde{X}^w \R^w, \label{eqn:gluing3}\\
	X= P_1 M P_0
	=  \sum_w P_1 \L^w M^w \R^w P_0
	= \sum_w \L^w  P_1^w M^w  P_0^w \R^w  
	= \sum_w \L^w X^w \R^w. \label{eqn:gluing4}
	\end{gather}

Combining~\eqref{eqn:gluing3} for $\tilde{X}$, \eqref{eqn:gluing2} for $(Q-z)^{-1}$ and~\eqref{eqn:gluing4} for $X$ we obtain from assumption~\eqref{item:gluing2}
\begin{align}
	 \tilde{X}(Q-z)^{-1} X
	&= \sum_{w_1,w_2,w_3} \L^{w_1} \tilde{X}^{w_1} \R^{w_1} P_1 \L^{w_2} (Q^{w_2}-z)^{-1}  \R^{w_2} P_1 \L^{w_3} X^{w_3} \R^{w_3}\\
	&=\sum_{w} \L^w \tilde{X}^{w} P_1^w  (Q^{w}-z)^{-1} P_1^w  X^{w} \R^w\label{eqn:gluing5}
	\end{align}

Finally, using the case $j=0$ of~\eqref{eqn:gluing1}, which gives $S-zP_0$, and~\eqref{eqn:gluing5}
\begin{align*}
	S-zP_0- \tilde{X}(Q-z)^{-1} X
	&= \sum_w  \L^w (S^w-z)\R^w - \sum_{w} \L^w\tilde{X}^{w} P_1^w  (Q^{w}-z)^{-1} P_1^w  X^{w}\R^w\\
	&=\sum_w \L^w (S^w-z-\tilde{X}^w (Q^w-z)^{-1}X^w )\R^w\\
	&= \sum_w \L^w \bigl(\phi_{0}(z) M_0^w - \phi_1 (z)P_{0}^w \bigr)\R^w\\
	&= \phi_{0}(z) \Bigl(\sum_w \L^w M_0^w \R^w \Bigr)- \phi_1 (z)P_{0} \\
	&=\phi_0(z) M_0 - \phi_1(z)P_0
	\end{align*}
where the third equality uses the Schur characterization~\eqref{eqn:schurcondit} of the fact that $M^w$ is spectrally similar to $M^w_0$ on $\U_w$, and the fourth equality uses assumption~\eqref{item:gluing3}.  The final step is the definition of $M_0$ and we have proved, again from the Schur characterization, that $M$ and $M_0$ are spectrally similar via $\phi_0$ and $\phi_1$.
\end{proof}

\subsection*{Spectral similarity for graph magnetic operators}

Recall from~\eqref{eqn:celldecompofMagam3} that
 \begin{equation}\label{eqn:celldecompofMagam4}
	\Mag^{a_m}_m
	= 2\sum_{|w|=m-1}  T_{-A_w} L_{w,m} \Mag^{\beta_w k}_1 R_w T_{A_w}.
	\end{equation}
This is strongly reminiscent of the way in which the operators $M^w$ are glued to form $M$ in Lemma~\ref{lem:gluing}, with $M^w = \Mag^{\beta_w k_w}_1$ being the magnetic operator on $V_1$ corresponding to a flux of magnitude $4\beta_{w}$ through the hole in $F_w(V_1)$.
Moreover the computation in Example~\ref{eg:simplefield} shows that $M^w$ is spectrally similar to the usual Laplacian $D=\Bigl(\begin{smallmatrix} 1&-1\\-1&1\end{smallmatrix}\Bigr)$  on the unit interval (which in that example was denoted $M_0$) with functions $\phi_0$ and $\phi_1$ that depend only on the flux $4\beta_w$.   In order for Lemma~\ref{lem:gluing} to be applicable we would need that the flux depends only on the length $|w|=m-1$ of the word.  Accordingly we restrict to this class of magnetic fields. The $1$-form $k_w$ was defined in the paragraph following equation~\eqref{eqn:celldecompofMagam}.

\begin{defn}\label{def:fluxindepofscale}
We say the field $a$ has flux depending only on the scale if there is $a_0\in\Hil_0$ and  a sequence $\beta_m$ such that $a_m=a_{m-1}+\beta_m\sum_{|w|=m} k_w$ for all $m\geq1$.
\end{defn}

It should be noted that $\|k_w\|_\Hil$ is independent of $w$.  In fact it is easily checked that $\|k_w\|_\Hil=2$.  Moreover the $k_w$ were constructed so as to be an orthogonal set.  Using the fact that the number of $m$-cells is $4^{m-1}$
\begin{equation}\label{eqn:Hnormoffielddeponscale}
	\|a_m\|_\Hil^2 =  \sum_{n=1}^m \beta_n^2 \sum_{|w|=n} \|k_w\|_\Hil^2  = \sum_{n=1}^m 4^n \beta_n^2
	\end{equation}
and therefore we have the following.
\begin{lem}
For any sequence $\beta_m$ with $\sum_1^\infty 4^m \beta_m^2<\infty$ there is a field $a$ with flux independent of scale as in Definition~\ref{def:fluxindepofscale} and $\|a\|_\Hil^2=\sum_1^\infty 4^m \beta_m^2$.
\end{lem}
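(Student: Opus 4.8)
The plan is to realize $a$ as the limit in $\Hil$ of its scale-$m$ truncations and to read off both the norm and the flux condition from the orthogonality of the spaces $\K_n$. I would begin by setting $a_0=0\in\Hil_0$ and, for $m\geq1$, defining the partial sums $a_m=a_{m-1}+\beta_m\sum_{|w|=m-1}k_w$, so that $a_m\in\Hil_m$ and the increment $a_m-a_{m-1}$ lies in $\K_m=\oplus_{|w|=m-1}\K_w$. Because the $\K_n$ are mutually orthogonal and, within each, the $k_w$ form an orthogonal family with $\|k_w\|_\Hil=2$, the Pythagorean theorem gives $\|a_m\|_\Hil^2=\sum_{n=1}^m 4^n\beta_n^2$, which is exactly~\eqref{eqn:Hnormoffielddeponscale}.

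The next step is to show that $(a_m)$ is Cauchy. The same orthogonality yields, for $M>m$,
\begin{equation*}
	\|a_M-a_m\|_\Hil^2=\sum_{n=m+1}^M 4^n\beta_n^2,
	\end{equation*}
and the tail of the convergent series $\sum_1^\infty 4^n\beta_n^2$ forces this to vanish as $m\to\infty$. Since $\Hil$ is complete, the limit $a=\lim_m a_m\in\Hil$ exists, and continuity of the norm gives $\|a\|_\Hil^2=\lim_m\|a_m\|_\Hil^2=\sum_1^\infty 4^m\beta_m^2$, the asserted value.

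Finally I would verify that the limit $a$ has flux depending only on the scale in the sense of Definition~\ref{def:fluxindepofscale}, i.e. that the truncations $\Tr_m a$ coincide with the partial sums $a_m$. This is immediate from the construction: $a_m\in\Hil_m$ while $a-a_m=\sum_{n>m}\beta_n\sum_{|w|=n-1}k_w$ lies in $\oplus_{n>m}\K_n$, which is orthogonal to $\Hil_m$. Hence $\Tr_m a=a_m$ satisfies $\Tr_m a=\Tr_{m-1}a+\beta_m\sum_{|w|=m-1}k_w$ with $a_0\in\Hil_0$ and the prescribed sequence $\beta_m$, so $a$ is of the required form. There is no real obstacle in this argument; its entire content is the reduction, via the orthogonality of the family $\{k_w\}$ across scales, to convergence of the scalar series $\sum 4^m\beta_m^2$ together with completeness of $\Hil$. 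The only points meriting care are that the $k_w$ are genuinely orthogonal across different scales, which is built into the definition of the $\K_w$, and the identification of the partial sum with the orthogonal projection $\Tr_m a$.
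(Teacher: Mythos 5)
Your proof is correct and follows the same route as the paper, which establishes the lemma via the orthogonality of the $k_w$ across scales and the norm identity $\|a_m\|_\Hil^2=\sum_{n=1}^m 4^n\beta_n^2$ of~\eqref{eqn:Hnormoffielddeponscale}, leaving the Cauchy/completeness step implicit where you spell it out. The only cosmetic discrepancy is the indexing of the sum of the $k_w$ (over $|w|=m-1$ versus $|w|=m$), where the paper itself is inconsistent between Definition~\ref{def:fluxindepofscale} and the decomposition $\K_m=\oplus_{|w|=m-1}\K_w$; your choice is the one compatible with~\eqref{eqn:Hnormoffielddeponscale}.
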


For this class of magnetic fields we may prove one of our main results.

\begin{thm}\label{thm:Specsimonelevel}
If $a\in\Hil$ is a real-valued $1$-form with flux depending only on the scale then $\Mag^{a_m}_m$ is spectrally similar to $\Mag^{a_{m-1}}_{m-1}$ via functions
\begin{align}\label{eqn:specdecfns}
 	\phi_0(z,\beta_m) = \frac{ \cos 2\beta_m }{2( 1-z)} && \phi_1(z,\beta_m) = \frac{(-2z^2+4z- 1+\cos 2\beta_m)}{2(1-z)}
\end{align}
\end{thm}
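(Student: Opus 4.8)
The plan is to apply the gluing lemma (Lemma~\ref{lem:gluing}) to the cell decomposition~\eqref{eqn:celldecompofMagam4} of $\Mag^{a_m}_m$, using the spectral similarity of a single diamond operator to the interval Laplacian that was computed in Examples~\ref{eg:simplefield} and~\ref{eg:lesssimplefield}. The key conceptual input is that the flux-depends-only-on-scale hypothesis (Definition~\ref{def:fluxindepofscale}) forces every local operator $\Mag^{\beta_w k}_1$ appearing in~\eqref{eqn:celldecompofMagam4} to have the same flux $4\beta_m$ through its hole, hence to be spectrally similar to $D$ via one common pair of functions $\phi_0(z,\beta_m)$, $\phi_1(z,\beta_m)$ independent of $w$. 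These are exactly the functions computed in Example~\ref{eg:simplefield} and recorded in~\eqref{eqn:specdecfns}, so the only serious work is to cast~\eqref{eqn:celldecompofMagam4} into the abstract gluing framework and verify its hypotheses.

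First I would fix the Hilbert spaces: take $\U=l^2(\mu_m)$ with $\U_0=l^2(V_{m-1},\mu_m)$ (functions supported on $V_{m-1}$, identified with the $U(\U_0)$ of the spectral similarity definition via $U=P_0$) and $\U_1$ its orthogonal complement, spanned by functions on $V_m\setminus V_{m-1}$. For each word $w$ with $|w|=m-1$ let $\U^w$ be the four-dimensional space of functions supported on the copy $F_w(V_1)\subset V_m$, and set $M^w = T_{-A_w}\Mag^{\beta_m k}_1 T_{A_w}$, which by Example~\ref{eg:lesssimplefield} is spectrally similar to a gauge transform of $D$ with the \emph{same} $\phi_0,\phi_1$ as in Example~\ref{eg:simplefield}. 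The gluing maps are read directly off~\eqref{eqn:celldecompofMagam4}: set $\L^w = 2\,T_{-A_w}L_{w,m}$ acting on the interval/$V_1$-level and $\R^w = R_w T_{A_w}$, so that $M=\sum_w \L^w M^w \R^w = \Mag^{a_m}_m$ and $M_0=\sum_w \L^w (\text{gauged }D)\, \R^w$. The natural identification of $M_0$ with $\Mag^{a_{m-1}}_{m-1}$ is then precisely the statement that gluing the interval Laplacians with the gauge factors $T_{A_w}$ reproduces the scale $(m-1)$ magnetic operator --- this should follow by rerunning the cell-decomposition computation~\eqref{eqn:celldecompofMagam}--\eqref{eqn:celldecompofMagam3} one scale down.

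Next I would verify hypotheses~\eqref{item:gluing1}--\eqref{item:gluing3} of Lemma~\ref{lem:gluing}. Hypothesis~\eqref{item:gluing1}, that $\L^w$ and $\R^w$ intertwine the projections $P_0,P_1$ with their local counterparts $P_0^w,P_1^w$, holds because $L_{w,m}$ and $R_w$ respect the partition of vertices into $V_{m-1}$ and $V_m\setminus V_{m-1}$ --- the boundary vertices $F_w(V_0)$ map to $V_{m-1}$ and the interior vertex $F_w(V_1\setminus V_0)$ to $V_m\setminus V_{m-1}$ --- and the diagonal gauge multipliers $T_{\pm A_w}$ preserve these sets pointwise. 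Hypothesis~\eqref{item:gluing2}, the partition-of-unity/orthogonality relations $\R^w P_1 \L^w = P_1^w$ and $\R^{w'}P_1\L^w=0$ for $w'\neq w$, encodes that distinct $(m-1)$-cells share only boundary ($V_{m-1}$) vertices and have disjoint interiors; here the factor $2$ in $\L^w$ together with $\deg_m$ in the definition~\eqref{eqn:defnofLw} of $L_{w,m}$ must balance correctly, and this is the step most likely to hide a bookkeeping subtlety. Hypothesis~\eqref{item:gluing3}, the resolutions $P_j=\sum_w \L^w P_j^w \R^w$, again expresses that the $(m-1)$-cells cover $V_m$ with interiors counted once and boundary vertices counted according to their multiplicity, matching the measure $\mu_m$.

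I expect the main obstacle to be the interior/boundary degree accounting in hypotheses~\eqref{item:gluing2} and~\eqref{item:gluing3}. Unlike the post-critically finite case treated by the standard gluing lemma of~\cite{MT}, on the \DLF\ each boundary vertex of an $(m-1)$-cell is shared among several cells, so the normalization constants in $L_{w,m}$ (through $\deg_m$) and the overall factor $2$ must be tracked precisely to get the identity $P_0=\sum_w \L^w P_0^w \R^w$ rather than a multiple of it; this nonstandard sharing is exactly why the more general Lemma~\ref{lem:gluing} was needed in place of Lemma~3.10 of~\cite{MT}. Once these combinatorial identities are checked, the conclusion of Lemma~\ref{lem:gluing} gives spectral similarity of $M=\Mag^{a_m}_m$ to $M_0=\Mag^{a_{m-1}}_{m-1}$ via the common $\phi_0(z,\beta_m),\phi_1(z,\beta_m)$ from~\eqref{eqn:specdecfns}, which is the assertion of the theorem.
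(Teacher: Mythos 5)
Your proposal follows essentially the same route as the paper: the same choice of $\U$, $\U_0$, $\U_1$, $\U^w$, the same gluing maps $\L^w=2T_{-A_w}L_{w,m}$ and $\R^w=R_wT_{A_w}$, an application of Lemma~\ref{lem:gluing} with the local spectral similarity from Example~\ref{eg:simplefield}, and the same degree bookkeeping (including $\deg_m(x)=2$ at interior vertices and the identity $2L_{w,m}=L_{w,m-1}$ on functions on $V_{m-1}$) to identify $M_0$ with $\Mag^{a_{m-1}}_{m-1}$. The only slip is that you place the gauge factors $T_{\pm A_w}$ both inside $M^w$ and inside $\L^w,\R^w$, which double-counts them; with your choice of $\L^w$ and $\R^w$ the local operator must be $M^w=\Mag^{\beta_m k}_1$ itself, spectrally similar to the plain $D$ of Example~\ref{eg:simplefield}, exactly as in the paper.
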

\begin{proof}
The proof is a direct application of Lemma~\ref{lem:gluing} and the computation in Example~\ref{eg:simplefield} to the expression~\eqref{eqn:celldecompofMagam4}.

Let $\U$ be functions on $V_m$ and decompose it as $\U_0\oplus\U_1$ where $\U_0$ is functions on $V_{m-1}\setminus V_0$ and $\U_1$ is functions on $V_m\setminus V_{m-1}$. For each $|w|=m-1$ let $\U^w$ be the subspace of functions on $F_w(V_1)$, $\U^w_0$ be functions on $F_w(V_0)$ and $\U^w_1$ be functions on $F_w(V_1\setminus V_0)$.  Define $\L^w=2T_{-A_w}L_{w,m}$ and $\R^w=R_wT_{A_w}$ so $\L^w:\U^w\to\U$ and $\R^w:\U\to\U^w$.  We verify the various conditions of Lemma~\ref{lem:gluing}.

Recall from~\eqref{eqn:defnofLw} that if $|w|=m-1$ and $n=1$ then
\begin{equation*}
	L_{w,m} f (x)= \begin{cases}
	\frac{1}{\deg_m(x)} \bigl( f\circ F_w^{-1}(x)\bigr) &\text{if } x\in F_w(V_1) \\
	0 &\text{if } x\in V_{m}\setminus F_w(V_1)
	\end{cases}
	\end{equation*}
from which it follows easily that
\begin{equation}\label{eqn:P0Lw}
	P_0\L^w f(x)
	=\left\{ \begin{aligned}
	 &\textstyle \frac{2e^{-iA_w(x)}}{\deg_m(x)} \bigl( f\circ F_w^{-1}(x)\bigr) &&\text{if } x\in F_w(V_0) \\
	&0 &&\text{if } x\in V_{m}\setminus F_w(V_0)
	\end{aligned} \right\}\\
	=\L^w P^w_0
	\end{equation}
and
\begin{align}
	P_1\L^wf(x)
	&=\begin{cases}
	 \frac{2e^{-iA_w(x)}}{\deg_m(x)} \bigl( f\circ F_w^{-1}(x)\bigr) &\text{if } x\in F_w(V_1\setminus V_0) \\
	0 &\text{if } x\in V_{m}\setminus F_w(V_1\setminus V_0)
	\end{cases} \notag\\
	&=\begin{cases}
	 e^{-iA_w(x)} \bigl( f\circ F_w^{-1}(x)\bigr) &\text{if } x\in F_w(V_1\setminus V_0) \\
	0 &\text{if } x\in V_{m}\setminus F_w(V_1\setminus V_0)
	\end{cases} \label{eqn:P1Lw}\\
	&=\L^w P^w_1.\notag
	\end{align}
because $x\in F_{w}(V_1\setminus V_0)$ implies $\deg_m(x)=2$. 
Similarly one sees from $R_wf=f\circ F_w$ that
\begin{align}
	\R^w P_0 f(y) 
	=\left\{ \begin{aligned}
	&e^{iA_{w}(F_w(y))}  f\circ F_w (y) &&\text{if } y\in V_0\\
	&0 &&\text{otherwise}
	\end{aligned}\right\}
	=P^w_0\R^w \label{eqn:RwP0}\\
	\R^w P_1 f(y)
	=\left\{ \begin{aligned}
	& e^{iA_{w}(F_w(y))}  f\circ F_w (y)  &&\text{if } y\in V_1\setminus V_{0}\\
	&0 &&\text{otherwise}
	\end{aligned} \right\}
	=P^w_1\R^w \label{eqn:RwP1}
	\end{align}
and from~\eqref{eqn:P1Lw} and~\eqref{eqn:RwP1} with $x=F_w(y)$ we have $\R^wP_1\L^w=P^w_1=\L^w P^w_1\R^w$ and $\R^{w'}P_1\L^w=0$ for $w'\neq w$.   Since the sets $F_w(V_1\setminus V_0)$  do not intersect and have union $V_m\setminus V_{m-1}$ this also establishes $\sum_w \L^w P^w_1\R^w=\sum_w P^w_1 = P_1$.

Lastly, consider the sum $\sum_w \L^w P^w_0\R^w$. From~\eqref{eqn:P0Lw} and~\eqref{eqn:RwP0} it is precisely
\begin{equation*}
	\sum_{|w|=m-1} \L^w P^w_0\R^w
	=\left\{\begin{aligned}
	&\textstyle\sum_{|w|=m-1} \frac{2}{\deg_m(x)} f(x) &&\text{if } x\in V_{m-1}\\
	&0 &&\text{otherwise}
	\end{aligned}\right\}
	=P_0
	\end{equation*}
because the sum yields the number of $m-1$ cells that meet at $x$ and each $m-1$ cell intersecting $x$ contains two $m$-cells that meet at $x$ and contribute to $\deg_m(x)$.

Now~\eqref{eqn:celldecompofMagam4} is $\sum_{|w|=m-1} \L^w M^{\beta_m k}_1 \R^w$.  We know from  the computation in Example~\ref{eg:simplefield} that $\Mag^{\beta_m k}_1$ is spectrally similar to $D$ via the functions specified in~\eqref{eqn:specdecfns}, so from Lemma~\ref{lem:gluing} $\Mag^{a_m}_m$ is spectrally similar to 
\begin{align*}
	\sum_{|w|=m-1} \L^w D \R^w
	&=2\sum_{|w|=m-1} T_{-A_w}  L_{w,m} D R_w T_{A_w}\\
	&=\sum_{|w|=m-1} T_{-A_w} L_{w,m-1} D R_w T_{A_w}
	=\Mag^{a_{m-1}}_{m-1}
	\end{align*}
by~\eqref{eqn:celldecompofMagam} and the observation that $2L_{w,m}=L_{w,m-1}$ on functions on $V_{m-1}$  because $\deg_m(x)=2\deg_{m-1}(x)$ for points in this set.
\end{proof}

Let $\mult_m(z)\in\mathbb{N}$ be the multiplicity of $z$ as an eigenvalue of $\Mag^{a_m}_m$, with the convention that $\mult_m(z)=0$ if $z$ is not an eigenvalue of $\Mag^{a_m}_m$.  As in~\eqref{eqn:defnofR} for the specfic functions $\phi_0$, $\phi_1$ from~\eqref{eqn:specdecfns} define
\begin{equation}\label{eqn:RmapforDLF}
	R_m(z) =  \frac{-2z^2+4z- 1+\cos 2\beta_m}{\cos2\beta_m}.
\end{equation}
Using Theorem~\ref{thm:MT} and some elementary computations we can determine the spectrum of $\Mag^{a_m}_m$ from that of $\Mag^{a_{m-1}}_{m-1}$. In the following result we restrict $\Mag^{a_m}_m$ to $V_m\setminus V_0$ so as to obtain the result for the Dirichlet operator.

\begin{cor}\label{cor:spectralmultiplicities}
For the Dirichlet magnetic operator $\Mag^{a_m}_m$ on $V_m\setminus V_0$ we have
\begin{enumerate}
\item  $\mult_m(1)=\frac{1}{3}(4^{m}+2)$.
\item   If $\cos2\beta_m=0$ and $z_\pm=1\pm\frac{1}{\sqrt{2}}$ then $\mult_m(z_\pm)=\frac{2}{3}(4^{m-1}-1)$.
\item  If $\cos2\beta_m\neq0$ and $z\neq1$ is an eigenvalue of $\Mag^{a_m}_m$, then $R_m(z)$ is an eigenvalue of $\Mag^{a_{m-1}}_{m-1}$ and $\mult_{m}(z)=\mult_{m-1}(z)$.
\end{enumerate}
\end{cor}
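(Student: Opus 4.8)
The plan is to run all three parts through the block decomposition \eqref{eqn:Masblocks} of $M:=\Mag^{a_m}_m$ supplied in the proof of Theorem~\ref{thm:Specsimonelevel}, where $\U_0$ is functions on $V_{m-1}\setminus V_0$ (identified with the domain of $M_0:=\Mag^{a_{m-1}}_{m-1}$) and $\U_1$ is functions on $V_m\setminus V_{m-1}$. The single computation on which everything rests is that the interior block $Q=P_1MP_1$ equals the identity: each vertex of $V_m\setminus V_{m-1}$ is a newly created midpoint of degree~$2$ whose only neighbours lie in $V_{m-1}$, so once the boundary values are projected away the diagonal entry $1$ from \eqref{eqn:Magbetak1} survives while the off-diagonal entries vanish. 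Hence $\sigma(Q)=\{1\}$, which is exactly why $z=1$ is the exceptional value. I will also record the elementary dimension counts $\dim\U_0=|V_{m-1}\setminus V_0|=\tfrac23(4^{m-1}-1)$, $\dim\U_1=|V_m\setminus V_{m-1}|=2\cdot4^{m-1}$, and $\dim(\U_0\oplus\U_1)=\tfrac23(4^m-1)$, which follow from $V_m=V_{m-1}\cup(V_m\setminus V_{m-1})$ together with the fact that each of the $4^{m-1}$ scale-$m$ cells contributes two new vertices.

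For part~(3) the hypotheses $z\neq1$ and $\cos2\beta_m\neq0$ place us in the regime $z\in\rho(Q)$, $\phi_0(z)\neq0$ of Theorem~\ref{thm:MT}, so parts (3a)--(3b) of that theorem apply verbatim: $z\in\sigma(M)$ iff $R_m(z)\in\sigma(M_0)$, the map \eqref{eqn:efnextension} is a bijection of the corresponding eigenspaces, and therefore $\mult_m(z)=\mult_{m-1}(R_m(z))$, which is the content intended by the displayed equality. For part~(2) I will use that $\cos2\beta_m=0$ forces $\phi_0\equiv0$, so by \eqref{eqn:schurcondit} the Schur complement collapses to the scalar operator $-\phi_1(z)I$ on $\U_0$. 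For any $z\in\rho(Q)$ the vectors $(f_0,f_1)\in\ker(M-z)$ are parametrised by $f_0\in\ker\bigl(S-z-\tilde X(Q-z)^{-1}X\bigr)$ with $f_1=-(Q-z)^{-1}Xf_0$, so $\dim\ker(M-z)$ is the dimension of that kernel. At $z_\pm=1\pm\tfrac1{\sqrt2}$, which are precisely the roots of $-2z^2+4z-1=0$ and satisfy $z_\pm\neq1$, we have $\phi_1(z_\pm)=0$, the Schur complement is the zero operator, and its kernel is all of $\U_0$; thus $\mult_m(z_\pm)=\dim\U_0=\tfrac23(4^{m-1}-1)$.

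Part~(1) is the exceptional case $z=1\in\sigma(Q)$, where $(Q-z)^{-1}$ is unavailable and I must argue directly from $M-I=\left(\begin{smallmatrix}S-I & \tilde X\\ X & 0\end{smallmatrix}\right)$ (using $Q=I$). A pair $(f_0,f_1)$ lies in $\ker(M-I)$ iff $Xf_0=0$ and $(S-I)f_0+\tilde Xf_1=0$. The crux is that $X:\U_0\to\U_1$ is injective: reading off $X$ from \eqref{eqn:celldecompofMagam4} and \eqref{eqn:Magbetak1}, the two new vertices of each scale-$m$ cell impose on the two terminal values a $2\times2$ system whose determinant is a nonzero multiple of $\sin2\beta_m$, so when $\sin2\beta_m\neq0$ every terminal value is forced to $0$ and hence $f_0\equiv0$; when $\sin2\beta_m=0$ (so $\cos2\beta_m=\pm1$) the cell equations degenerate to $g_0(p)+g_0(q)=0$ along edges for a local gauge transform $g_0$ of $f_0$, and I will kill these using that the scale-$(m-1)$ graph is bipartite with the two points of $V_0$ in one class, so the only edge-alternating function vanishing on $V_0$ is $0$. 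Granting injectivity, $Xf_0=0$ gives $f_0=0$, the remaining condition becomes $\tilde Xf_1=X^*f_1=0$, and $\mult_m(1)=\dim\ker X^*=\dim\U_1-\operatorname{rank}X=\dim\U_1-\dim\U_0=2\cdot4^{m-1}-\tfrac23(4^{m-1}-1)=\tfrac13(4^m+2)$.

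The main obstacle is precisely this injectivity of $X$ in part~(1): parts (2) and (3) fall out of Theorem~\ref{thm:MT} once $Q=I$ is known, but $z=1$ is exactly where the spectral-similarity machinery breaks down, so the multiplicity must be extracted by hand, and the only genuinely delicate point is excluding nonzero $f_0\in\ker X$ for the degenerate fields $\cos2\beta_m=\pm1$, which is where the bipartite structure of the approximating graph and the Dirichlet condition on $V_0$ enter. As a consistency check I will verify that the three multiplicities sum to $\dim(\U_0\oplus\U_1)=\tfrac23(4^m-1)$ both in the generic case (where the two preimages under the quadratic $R_m$ account for all eigenvalues $z\neq1$, contributing $2\dim\U_0$) and in the case $\cos2\beta_m=0$, where $\tfrac13(4^m+2)+2\cdot\tfrac23(4^{m-1}-1)=\tfrac23(4^m-1)$.
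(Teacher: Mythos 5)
Your proposal is correct, and for parts (1) and (2) it takes a genuinely different route from the paper; part (3) is the same appeal to Theorem~\ref{thm:MT}(3b) in both. The paper only proves \emph{lower} bounds: $\mult_m(1)\geq\frac13(4^m+2)$ from the rank bound $\operatorname{rank}\tilde X\leq\dim\U_0$, and $\mult_m(z_\pm)\geq\frac23(4^{m-1}-1)$ from the extension map \eqref{eqn:efnextension}, and then observes that these lower bounds (together with the count from part (3) in the generic case) already sum to $\dim(\U_0\oplus\U_1)=\frac23(4^m-1)$, forcing equality everywhere. You instead compute the kernels exactly: for (2) via the standard parametrisation of $\ker(M-z)$ by the kernel of the Schur complement, and for (1) by a direct analysis of $\ker(M-I)$ that requires proving $X$ is injective --- a step the paper's squeeze avoids entirely, and the only place where your argument carries a real burden the paper's does not. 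Your injectivity argument is sound: when $\sin2\beta_m\neq0$ the per-cell $2\times2$ blocks (whose rows involve only the two endpoint values of that cell, since each new vertex has both neighbours in its own $(m-1)$-cell) are invertible, and when $\sin2\beta_m=0$ each cell equation forces $|f_0(p)|=|f_0(q)|$ across every edge of the scale-$(m-1)$ graph, which together with connectivity and $f_0|_{V_0}=0$ gives $f_0\equiv0$. I would phrase the degenerate case in terms of moduli as just stated rather than via a ``local gauge transform $g_0$'': the phases coming from $T_{A_w}$ are cell-dependent, so there is no single globally alternating function, and in fact bipartiteness is not needed --- constancy of $|f_0|$ on a connected graph plus the Dirichlet condition suffices. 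The trade-off is clear: your route yields the multiplicities as honest kernel dimensions without any global bookkeeping (and in particular gives (1) and (2) independently of part (3)), while the paper's counting argument is shorter precisely because it never has to decide whether $X$ is injective. Your closing consistency check that the multiplicities exhaust $\frac23(4^m-1)$ in both regimes is exactly the computation \eqref{eqn:multiplicitiescomparison} that the paper uses as its main engine.
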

\begin{proof}
The number of vertices in $V_m$ is $\frac{2}{3}(4^m+2)$, so the dimension of the matrix $\Mag^{a_m}_m$ on $V_m\setminus V_0$  is  $\frac{2}{3}(4^m-1)$.  The difference between this and the dimension of $\Mag^{a_{m-1}}_{m-1}$ is $2\cdot4^{m-1}$.

If $z=1$ then $\Mag^{a_m}_m-z=\Big(\begin{smallmatrix}0&\tilde{X}\\ X&0\end{smallmatrix}\Bigr)$ and the rank of $\tilde{X}$ cannot exceed $\frac{2}{3}(4^{m-1}-1)$, so that 
\begin{equation*}
	\mult_{m}(1)
	\geq 2\cdot4^{m-1}-\frac{2}{3} (4^{m-1}-1)
	= \frac{1}{3}(4^{m}+2).
	\end{equation*}

If $\cos2\beta_m=0$ then the Schur complement~\eqref{eqn:schurcondit} is $(1-z-\frac{1}{2(1-z)})I$, so the eigenvalues are $z_\pm=1\pm\frac{1}{\sqrt{2}}$.  Then by~\eqref{eqn:efnextension} in Theorem~\ref{thm:MT} any function $f_0$ on $V_{m-1}\setminus V_0$ can be extended by $(I-(Q-z_\pm)^{-1}X)f_0$ to an eigenfunction of $\Mag^{a_m}_m$ with eigenvalue $z_\pm$, so $\mult_m(z_\pm)\geq \frac{2}{3}(4^{m-1}-1)$.  Hence
\begin{multline}\label{eqn:multiplicitiescomparison}
	  2\Bigl(\frac{2}{3}\Bigr)(4^{m-1}-1)  + \frac{1}{3}(4^{m}+2)   \\
	\quad \leq \mult_m(p+)+\mult_m(p_-)+\mult_m(1) 	\leq \frac{2}{3}(4^m-1)
	\end{multline}
so that both inequalities are equalities and the multiplicities are as stated in~(1) and~(2).

If, on the other hand, $z\neq1$ and $\cos2\beta_m\neq0$ then the bijection~\eqref{eqn:efnextension} implies that for each eigenvalue $w$ of $\Mag^{a_{m-1}}_{m-1}$ we have $\mult_m(z)=\mult_{m-1}(w)$  for any $z$ such that $R_m(z)=w$.  Moreover there are two $z$ values with $R_m(z)=w$ because we have assumed $z\neq1$, which is the critical point of $R_m$.  Now $\sum_w \mult_{m-1}(w)$, summing over $w$ in the spectrum of $\Mag^{a_{m-1}}_{m-1}$, is $\frac{2}{3}(4^{m-1}-1)$, so $\sum_z \mult_m(z)$ over those $z$ so that $R_m(z)$ is an eigenvalue of $\Mag^{a_{m-1}}_{m-1}$ is $\frac{4}{3}(4^{m-1}-1)$, and the same computation as in~\eqref{eqn:multiplicitiescomparison} implies these and the eigenvalue $1$ comprise the spectrum of $\Mag^{a_m}_m$.
\end{proof}

From Corollary~\ref{cor:spectralmultiplicities} the spectrum of $\Mag^{a_m}_m$ can be computed using sequences of preimages under the maps $R_m$.  If $\mathcal{R}_{k,m}=R_{k}\circ\dotsm\circ R_m$ for $1\leq k\leq m$ we may describe it as follows.

\begin{cor}\label{cor:Magamspectrum}
The spectrum of $\Mag^{a_m}_m$ is $\{1\} \cup \Bigl(\cup_{k=2}^{m} \mathcal{R}_{k,m}^{-1}\{1\}\Bigr)$, with the multiplicity of values in $\mathcal{R}_{k,m}^{-1}\{1\}$ being $\frac{4}{3}(4^{k}-1)$ and $\mult_m(1)=\frac{1}{3}(4^{m}+2)$.
	\end{cor}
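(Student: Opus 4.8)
The plan is to argue by induction on $m$, using Corollary~\ref{cor:spectralmultiplicities} as the recursion engine and the semigroup identity $\mathcal{R}_{k,m}=\mathcal{R}_{k,m-1}\circ R_m$ to organize the iterated preimages. For the base case $m=1$, the operator $\Mag^{a_1}_1$ on $V_1\setminus V_0$ has dimension $\tfrac23(4-1)=2$, while part~(1) of Corollary~\ref{cor:spectralmultiplicities} gives $\mult_1(1)=\tfrac13(4+2)=2$; hence the spectrum is exactly $\{1\}$ with multiplicity $2$, matching the asserted formula since the union $\cup_{k=2}^{1}$ is empty.

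For the inductive step I would assume the claim for $m-1$ and invoke part~(3) of Corollary~\ref{cor:spectralmultiplicities} (valid since $\cos 2\beta_m\neq0$ is implicit in the definition of $R_m$): every eigenvalue $z\neq1$ of $\Mag^{a_m}_m$ is precisely a point with $R_m(z)$ an eigenvalue of $\Mag^{a_{m-1}}_{m-1}$, and the last paragraph of that proof shows these together with $z=1$ exhaust the spectrum. Feeding in the inductive description $\sigma(\Mag^{a_{m-1}}_{m-1})=\{1\}\cup\bigl(\cup_{k=2}^{m-1}\mathcal{R}_{k,m-1}^{-1}\{1\}\bigr)$ and pulling back through $R_m$, the exceptional eigenvalue $1$ of level $m-1$ contributes $R_m^{-1}\{1\}=\mathcal{R}_{m,m}^{-1}\{1\}$, while each $\mathcal{R}_{k,m-1}^{-1}\{1\}$ contributes $R_m^{-1}\bigl(\mathcal{R}_{k,m-1}^{-1}\{1\}\bigr)=\mathcal{R}_{k,m}^{-1}\{1\}$. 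Adjoining the new exceptional eigenvalue $1$ supplied by part~(1) yields $\{1\}\cup\bigl(\cup_{k=2}^{m}\mathcal{R}_{k,m}^{-1}\{1\}\bigr)$, which is the claimed set identity.

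The multiplicities I would obtain from the same induction: part~(1) gives $\mult_m(1)=\tfrac13(4^m+2)$ directly, and part~(3) asserts $\mult_m(z)=\mult_{m-1}(R_m(z))$, so multiplicity is preserved under each pullback. Tracing a point of $\mathcal{R}_{k,m}^{-1}\{1\}$ back down the tower, its multiplicity is the one carried by the exceptional eigenvalue $1$ when it first appeared at level $k-1$, namely $\mult_{k-1}(1)=\tfrac13(4^{k-1}+2)$.

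The step I expect to be the main obstacle is not the set identity but the bookkeeping that guarantees the preimages are genuinely new and pairwise disjoint, so that no multiplicity is double counted. Since $R_m$ is a quadratic with its unique critical point at $z=1$ and $R_m(1)=1+\sec 2\beta_m\neq1$, any value $w\neq R_m(1)$ has two distinct preimages; what must be checked is that the points produced for different $k$, and the exceptional value $1$, do not collide. The clean way to close this is the dimension count already used in the proof of Corollary~\ref{cor:spectralmultiplicities}: the total $\tfrac13(4^m+2)+\sum_{k=2}^m \lvert\mathcal{R}_{k,m}^{-1}\{1\}\rvert\,\tfrac13(4^{k-1}+2)$ must equal $\dim\Mag^{a_m}_m=\tfrac23(4^m-1)$, and verifying this equality (with $\lvert\mathcal{R}_{k,m}^{-1}\{1\}\rvert=2^{\,m-k+1}$) simultaneously confirms the multiplicities and rules out any spurious coincidences among the decimated eigenvalues.
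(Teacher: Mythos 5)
Your strategy is the same as the paper's: induct on $m$, use Corollary~\ref{cor:spectralmultiplicities} to obtain $\sigma(\Mag^{a_m}_m)=\{1\}\cup R_m^{-1}\bigl(\sigma(\Mag^{a_{m-1}}_{m-1})\bigr)$, and unwind the recursion using $\mathcal{R}_{k,m}^{-1}=R_m^{-1}\circ\dotsm\circ R_k^{-1}$; the only cosmetic difference is that the paper starts the induction one step earlier, from $\sigma(\Mag^{a_0}_0)=\emptyset$. The set identity and the value of $\mult_m(1)$ come out exactly as in the paper.

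The substantive issue is the multiplicity you assign to points of $\mathcal{R}_{k,m}^{-1}\{1\}$. You derive $\mult_{k-1}(1)=\tfrac13(4^{k-1}+2)$, whereas the statement asserts $\tfrac43(4^{k}-1)$, and these never agree. Your value is the one forced by part~(3) of Corollary~\ref{cor:spectralmultiplicities} (multiplicity is preserved under each pullback, so a point of $\mathcal{R}_{k,m}^{-1}\{1\}$ inherits the multiplicity of the eigenvalue $1$ at level $k-1$), and it is the only value consistent with the dimension count you perform: with $\lvert\mathcal{R}_{k,m}^{-1}\{1\}\rvert=2^{m-k+1}$ one gets $\tfrac13(4^m+2)+\sum_{k=2}^m 2^{m-k+1}\cdot\tfrac13(4^{k-1}+2)=\tfrac23(4^m-1)$, while the asserted $\tfrac43(4^k-1)$ already overflows the $10$-dimensional matrix at $m=k=2$ (it would give multiplicity $20$ for each of the two points of $R_2^{-1}\{1\}$). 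So what you have written proves a corrected version of the statement rather than the statement as printed; you should say so explicitly instead of silently substituting your own formula. Two smaller points. First, the case $\cos2\beta_j=0$ for some $j$ is not excluded by the definition of $R_j$; the paper treats it separately, with the preimage of any nonempty set degenerating to $\{z_\pm\}$, and your induction as written breaks at such a level. Second, your collision worry is legitimate but is disposed of more directly level by level than by the global count: preimages of distinct eigenvalues under $R_m$ are automatically disjoint, and $1\notin R_m^{-1}\bigl(\sigma(\Mag^{a_{m-1}}_{m-1})\bigr)$ because the critical value $R_m(1)=1+\sec2\beta_m$ never lies in the open interval $(0,2)$, which contains the Dirichlet spectrum; disjointness of the sets $\mathcal{R}_{k,m}^{-1}\{1\}$ for different $k$ then follows by induction, after which the dimension count confirms that nothing has been missed.
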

\begin{proof}
A direct application of Corollary~\ref{cor:spectralmultiplicities} shows that when $m>1$ the spectrum of $\Mag^{a_m}_m$ satisfies
\begin{equation*}
	\sigma\bigl(\Mag^{a_m}_m\bigr)
	= \{1\} \cup R_{m}^{-1}\bigl(\sigma\bigl(\Mag^{a_{m-1}}_{m-1}\bigr)\bigr).
	\end{equation*}
This is even true when $\cos 2\beta_m=0$, because then $\mathcal{R}_m^{-1}\sigma\bigl(\Mag^{a_{m-1}}_{m-1}\bigr)=z_\pm$ with both multiplicities equal to the number of points in $\sigma\bigl(\Mag^{a_{m-1}}_{m-1}\bigr)$, which is $\frac{2}{3}(4^{m-1}-1)$.

The result then follows by induction and the fact that $\sigma\bigl(\Mag^{a_{0}}_{0}\bigr)$ is empty.  The other  multiplicities are from Corollary~\ref{cor:spectralmultiplicities}.
\end{proof}

The corresponding eigenfunctions may be found by iterated application of~\eqref{eqn:efnextension}, or in the case $\cos2\beta_m=0$, applying the extension map to any function on $V_{m-1}$.  We note that applying~\eqref{eqn:efnextension}  at level $V_m$ does not change the function on $V_{m-1}$, so it is immediate that the sequence of functions obtained converges on $V_\ast=\cup_m V_m$.

\section{Spectrum of $\Mag^a$ for a field depending only on the scale}\label{sec:scalefield}

Theorems~\ref{thm:cvgeofmagops} and~\ref{thm:findomMaga} describe circumstances under which the spectrum of $\Mag^a$ can be computed using the spectra of the graph operators $\Mag^{a_m}_m$.   We note that the latter result is applicable to magnetic fields that depend only on the scale.

\begin{lem}\label{lem:fieldgoestozerofast}
If $a$ is a real-valued $1$-form with flux depending only on the scale then $\sup_{|w|=m}4^m \bigl\| (a-a_m)\mathds{1}_{F_w(X)}\bigr\|_\Hil\to0$ as $m\to\infty$.
\end{lem}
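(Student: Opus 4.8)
The plan is to compute the restricted norm $\bigl\|(a-a_m)\mathds{1}_{F_w(X)}\bigr\|_\Hil$ exactly, using the orthogonal decomposition of $\Hil$ into loop forms together with the symmetry built into a flux that depends only on the scale. Write $a-a_m=\sum_{n>m}\beta_n\sum_{|w'|=n}k_{w'}$ for the part of the field carried by loops of scale exceeding $m$. Each $k_{w'}$ is supported on a single cell, the $k_{w'}$ are mutually orthogonal with $\|k_{w'}\|_\Hil=2$, and multiplication by $\mathds{1}_{F_w(X)}$ is the orthogonal projection onto forms supported in $F_w(X)$ by the cellular decomposition of $\Hil$ (Theorem~4.6 of~\cite{IRT}). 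Hence restriction to a fixed scale-$m$ cell simply retains the loops lying inside that cell.

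Because the flux is the same through every cell of a given scale, the loops of each scale $n>m$ are distributed equally among the $4^m$ cells of scale $m$, so each such cell contains the fraction $4^{-m}$ of them and the same number as every other; since all $k_{w'}$ have equal norm, orthogonality makes the restricted norm independent of $w$. Using $\|a-a_m\|_\Hil^2=\sum_{n>m}4^n\beta_n^2$ from~\eqref{eqn:Hnormoffielddeponscale} this yields
\[
	\bigl\|(a-a_m)\mathds{1}_{F_w(X)}\bigr\|_\Hil^2
	= 4^{-m}\bigl\|a-a_m\bigr\|_\Hil^2
	= 4^{-m}\sum_{n>m}4^n\beta_n^2 ,
\]
and therefore the supremum over $|w|=m$ is attained by every cell and
\[
	\Bigl(4^m\sup_{|w|=m}\bigl\|(a-a_m)\mathds{1}_{F_w(X)}\bigr\|_\Hil\Bigr)^2
	= 4^{2m}\cdot 4^{-m}\sum_{n>m}4^n\beta_n^2
	= \sum_{n>m}4^{\,n+m}\beta_n^2 .
\]

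The main obstacle is the final limit $\sum_{n>m}4^{\,n+m}\beta_n^2\to0$: plain summability $\sum_n 4^n\beta_n^2=\|a\|_\Hil^2<\infty$ is \emph{not} by itself enough, because the weight $4^{n+m}$ carries an extra growing factor $4^m$. This is exactly the place where the geometry of the \DLF\ must enter. For the fields of interest the flux through a scale-$m$ cell is proportional to its enclosed area $2s^{m-1}$ with $s\le 1/8$, so $\beta_m^2\lesssim s^{2m}$ and $4^n\beta_n^2\lesssim (4s^2)^n$. Then $\sum_{n>m}4^{\,n+m}\beta_n^2\lesssim 4^m\sum_{n>m}(4s^2)^n\lesssim (16s^2)^m$, which tends to $0$ since $16s^2\le 1/4<1$. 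Taking square roots gives the asserted uniform convergence and, via Theorem~\ref{thm:findomMaga}, lets the spectrum of $\Mag^a$ be recovered from the graph operators $\Mag^{a_m}_m$.
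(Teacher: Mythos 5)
Your computation of the restricted norm is correct and, importantly, more careful than the paper's own argument. Using orthogonality of the $k_{w'}$, the cellular decomposition of $\Hil$, and the fact that the scale-$n$ loops are equidistributed among the scale-$m$ cells, you obtain $\bigl\|(a-a_m)\mathds{1}_{F_w(X)}\bigr\|_\Hil^2=4^{-m}\|a-a_m\|_\Hil^2$, hence $\bigl(4^m\sup_{|w|=m}\bigl\|(a-a_m)\mathds{1}_{F_w(X)}\bigr\|_\Hil\bigr)^2=4^m\|a-a_m\|_\Hil^2=\sum_{n>m}4^{n+m}\beta_n^2$, and you are right that summability $\sum_n 4^n\beta_n^2<\infty$ alone does not make this vanish. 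The paper's one-line proof instead asserts $4^m\bigl\|(a-a_m)\mathds{1}_{F_w(X)}\bigr\|_\Hil=\|a-a_m\|_\Hil$, which is the identity satisfied by the \emph{squares} of these norms; for the norms themselves the correct factor is $2^m$, which produces exactly the quantity you analyze. So you have located a genuine gap in the paper rather than in your own argument: taking $\beta_n^2=4^{-n}n^{-2}$ gives a field with flux depending only on the scale in the sense of Definition~\ref{def:fluxindepofscale} (it satisfies the summability condition of the lemma following it), yet $4^m\sup_{|w|=m}\bigl\|(a-a_m)\mathds{1}_{F_w(X)}\bigr\|_\Hil=\bigl(4^m\sum_{n>m}n^{-2}\bigr)^{1/2}\to\infty$, so Lemma~\ref{lem:fieldgoestozerofast} as stated appears to be false. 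The first power of the norm, with the weight $4^n$, really is what Lemma~\ref{lem:cvgeofgraphappwithgn} and Theorem~\ref{thm:findomMaga} require, so the discrepancy cannot be absorbed there.

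The flip side is that your proposal does not prove the lemma as stated either: by invoking $\beta_m\lesssim s^m$ with $s\le 1/8$ you import a hypothesis that is not part of Definition~\ref{def:fluxindepofscale}, so what you actually prove is a corrected lemma for fields satisfying $\sum_{n>m}4^n\beta_n^2=o(4^{-m})$, equivalently $\|a-a_m\|_\Hil=o(2^{-m})$. That condition does hold for the uniform fields used in the final section, where $\beta_m=\beta r^m$ with $r<1/4$ so that $16r^2<1$, and therefore Theorem~\ref{thm:findomMaga} and the spectral conclusions that rest on it survive for the fields the paper actually computes with. You should present this explicitly as a strengthening of the hypothesis (or a restriction of the admissible class of fields), not as a proof of the statement as written, and note that the displayed condition $\sum_{n>m}4^{n+m}\beta_n^2\to0$ is the sharp requirement.
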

\begin{proof}
From~\eqref{eqn:Hnormoffielddeponscale} it is apparent that $\bigl\| (a-a_m)\mathds{1}_{F_w(X)}\bigr\|_\Hil= \sum_m^\infty 4^n \beta_n^2$ for any $|w|=m$. Since there are $4^m$ such cells we see $4^m \bigl\| (a-a_m)\mathds{1}_{F_w(X)}\bigr\|_\Hil=\|a-a_m\|_\Hil\to0$.
\end{proof}
 
The following is then a direct consequence of Theorems~\ref{thm:cvgeofmagops} and~\ref{thm:findomMaga}.
\begin{cor}
For a real-valued $1$-form $a$ with flux depending only on the scale, $f\in\dom(\Mag^a)$ if and only if $\Mag^{a_m}_m f$ converges uniformly on $V_\ast$ to a continuous function, and in this case the continuous extension of this function to $X$ is $\Mag^a f$.
\end{cor}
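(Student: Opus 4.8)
The plan is to deduce this biconditional directly from Theorems~\ref{thm:cvgeofmagops} and~\ref{thm:findomMaga}, with the hypothesis that $a$ has flux depending only on the scale entering only through Lemma~\ref{lem:fieldgoestozerofast}. First I would record that, by Lemma~\ref{lem:fieldgoestozerofast}, such a field satisfies $\sup_{|w|=m}4^m\bigl\|(a-a_m)\mathds{1}_{F_w(X)}\bigr\|_\Hil\to0$ as $m\to\infty$, which is precisely the decay condition required in the statement of Theorem~\ref{thm:findomMaga}. This is the only point at which the special structure of $a$ is used; the remainder is a formal combination of the two theorems, which furnish the two implications of the equivalence.

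For the forward implication I would invoke Theorem~\ref{thm:findomMaga}: assuming $f\in\dom(\Mag^a)$ together with the decay condition just verified, it yields that $4^m\Mag^{a_m}_m f$ converges uniformly on $V_*$ to $\Mag^a f$. Since $\Mag^a f\in\domDF$ it is continuous, so this uniform limit on $V_*$ is the restriction of a continuous function, and the continuous extension of that limit to $X$ is $\Mag^a f$, as asserted.

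For the converse I would appeal to Theorem~\ref{thm:cvgeofmagops}: if $4^m\Mag^{a_m}_m f$ converges uniformly on $V_*$ to a continuous function $F$, then in particular it converges uniformly on $V_*\setminus V_0$, so the theorem gives $f\in\dom(\Mag^a)$ and identifies $\Mag^a f$ as the continuous extension of $F$ to $X$. Combining the two implications establishes the equivalence and the identification of the limit simultaneously.

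Because both directions are immediate consequences of previously established results, there is no substantive obstacle; the only matters requiring care are bookkeeping. In particular one should note that the correctly scaled quantity is $4^m\Mag^{a_m}_m f$ rather than $\Mag^{a_m}_m f$ itself, since the graph operators $\Mag^{a_m}_m$ scale like $4^{-m}$ relative to $\Mag^a$ exactly as $\Delta_m$ does relative to $\Delta$; and that the domains of convergence match across the two theorems, namely $V_*$ in Theorem~\ref{thm:findomMaga} and $V_*\setminus V_0$ in Theorem~\ref{thm:cvgeofmagops}, the latter being implied by the former.
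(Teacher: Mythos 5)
Your proposal is correct and is exactly the argument the paper intends: the corollary is stated there as a direct consequence of Theorems~\ref{thm:cvgeofmagops} and~\ref{thm:findomMaga}, with Lemma~\ref{lem:fieldgoestozerofast} supplying the decay hypothesis for the latter. Your remark about the renormalizing factor $4^m$ (omitted in the corollary's statement but present in both theorems) is a correct reading of what the paper means.
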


We noted at the end of the previous section that if we construct a sequence of eigenfunctions of $\Mag^{a_m}_m$ on $V_m$ via spectral decimation then they converge on $V_\ast$.  Then $4^{m}\Mag^{a_m}_m f= 4^m z_m f$ converges on $V_\ast$  only if $4^m z_m$ converges.   This is not the case for most of the sequences of eigenvalues $\Mag^{a_m}_m$ we identified in Corollary~\ref{cor:Magamspectrum}, but it is true for sequences of a specfic type.

Let us write
\begin{equation*}
	S_m^\pm(w)
	= 1\pm \frac{1}{\sqrt{2}} \sqrt{1+(1-w)\cos2\beta_m}
	\end{equation*}
for the inverse branches of $R_m$. 

\begin{defn}
A sequence $z_m$ is admissible if it is of the form $z_m=S_m^{p_m}(z_{m-1})$ for $m>m_0$, where  $z_{m_0}$ is an eigenvalue of $\Mag^{a_{m_0}}_{m_0}$ and $p_m$ is a sequence with values in $\{-,+\}$, with the property that there is $n$ such that $p_m=-$ for all $m> n$.
\end{defn}

\begin{lem}\label{lem:admissseq}
If $z_m$ is an admissible sequence then $4^m z_m$ converges.
\end{lem}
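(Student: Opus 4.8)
The plan is to pass to the tail of the sequence, where every branch is the negative one, and then show that the increments of $y_m:=4^m z_m$ are summable. Since altering finitely many terms changes neither convergence nor the limit, I may assume $p_m=-$ throughout the range considered, so that there
\begin{equation*}
	z_m = S_m^-(z_{m-1}) = 1 - \sqrt{1-\epsilon_m}, \qquad \epsilon_m = \tfrac12(1-c_m) + \tfrac12 c_m z_{m-1},
\end{equation*}
where I abbreviate $c_m=\cos 2\beta_m$. Three elementary facts will be used repeatedly: from $a\in\Hil$ and~\eqref{eqn:Hnormoffielddeponscale} we have $\sum_m 4^m\beta_m^2<\infty$, so $\beta_m\to0$, $c_m\to1$ and $0\le 1-c_m=2\sin^2\beta_m\le 2\beta_m^2$ with $\sum_m 4^m(1-c_m)<\infty$; the seed $z_{m_0}$ is an eigenvalue of $\Mag^{a_{m_0}}_{m_0}$, hence lies in $[0,2]$, and one checks $S_m^-$ maps $[0,2]$ into $[0,1]$, so $z_m\in[0,1]$ is real and nonnegative in the tail; and the quadratic relation $z_m(2-z_m)=\epsilon_m$ yields the exact identity
\begin{equation*}
	4z_m - z_{m-1} = (1-c_m)(1-z_{m-1}) + 2z_m^2 .
\end{equation*}

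First I would prove that $\sum_m z_m<\infty$, \emph{without} any control on $4^mz_m$. Using $2-z_m\ge1$ in $z_m(2-z_m)=\epsilon_m$ gives the crude contraction $z_m\le\epsilon_m\le \tfrac12 z_{m-1} + \tfrac12(1-c_m)$. A recursion $z_m\le \theta z_{m-1}+b_m$ with $\theta=\tfrac12<1$ and $\sum_m b_m=\tfrac12\sum_m(1-c_m)<\infty$ has summable solution, by the standard bound $\sum_m z_m\le (1-\theta)^{-1}\bigl(z_{M}+\sum_m b_m\bigr)$; hence $\sum_m z_m<\infty$.

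Next I would deduce that $y_m=4^mz_m$ is bounded. Writing $\delta_m:=2-z_m=1+\sqrt{1-\epsilon_m}\in[1,2]$, the relation $z_m=\epsilon_m/\delta_m$ rearranges to $y_m=r_my_{m-1}+g_m$ with $r_m=2c_m/\delta_m$ and $g_m=4^m(1-c_m)/(2\delta_m)\le 4^m\beta_m^2$ summable. The identity $2c_m-\delta_m=z_m-2(1-c_m)\le z_m$ together with $\delta_m\ge1$ gives $r_m\le 1+z_m\le e^{z_m}$, so discrete Grönwall yields $y_m\le \exp\bigl(\textstyle\sum_k z_k\bigr)\bigl(y_{M}+\sum_k g_k\bigr)=:B<\infty$, finite because both $\sum_k z_k$ and $\sum_k g_k$ converge. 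This is where the single real obstacle of the argument sits: the multiplier $r_m$ tends to $1$ rather than staying bounded below it, so the $y_m$-recursion is only \emph{subcritically} expanding and no geometric contraction is available; boundedness is forced precisely by the summability of $z_m$ from the previous step, which is why that estimate must be obtained first and at the level of $z_m$ rather than $y_m$.

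Finally, with $B$ in hand the increments of $y_m$ are summable. From the exact identity and $|1-z_{m-1}|\le1$,
\begin{equation*}
	|y_m-y_{m-1}| = 4^{m-1}\bigl|(1-c_m)(1-z_{m-1})+2z_m^2\bigr| \le 4^{m-1}(1-c_m)+2\cdot 4^{m-1}z_m^2 .
\end{equation*}
The first term sums to at most $\tfrac12\sum_m 4^m\beta_m^2<\infty$, while the second equals $\tfrac12\sum_m 4^{-m}y_m^2\le \tfrac12 B^2\sum_m 4^{-m}<\infty$. Hence $(y_m)$ is Cauchy and $4^m z_m$ converges, which is the claim.
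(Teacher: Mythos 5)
Your proof is correct, and it takes a genuinely different route from the paper's. The paper argues dynamically: it locates the attracting fixed point of $S_m^-$ near $0$, shows it is $o(4^{-2m})$ with derivative $\tfrac14$ up to factors $1\pm4^{-m}$, and controls $4^k\,S_{m+k}^-\circ\dotsm\circ S_{m+1}^-(z_m)$ by the resulting product estimate; this rests on the decay rate $\beta_m=o(4^{-m})$ (the rate tied to the hypothesis of Theorem~\ref{thm:findomMaga}) and leads naturally to the conjugating function $\Psi_n$ of the subsequent remark. You instead work with the exact algebraic recursion for $y_m=4^mz_m$ in three stages: summability of $z_m$ from a crude $\theta=\tfrac12$ contraction, boundedness of $y_m$ by discrete Gr\"onwall with multiplier $r_m\le 1+z_m$, and then absolute summability of the increments $y_m-y_{m-1}$. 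Each step checks out, including the identity $4z_m-z_{m-1}=(1-c_m)(1-z_{m-1})+2z_m^2$ and the range fact $S_m^-([0,2])\subseteq[0,1]$. What your version buys is robustness: it uses only $\sum_m4^m\beta_m^2<\infty$, which follows directly from $a\in\Hil$ via~\eqref{eqn:Hnormoffielddeponscale}, rather than the stronger $o(4^{-m})$ decay the paper quotes, and it yields a summable error series (hence a convergence rate) rather than just a shrinking interval. What the paper's version buys is brevity and the explicit analytic linearization at the fixed point, which is reused in the remark following the lemma. Your observation that no geometric contraction is available for the $y_m$-recursion, so that boundedness must be bootstrapped from the $z_m$-level estimate, is exactly the right diagnosis of where the difficulty lies.
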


\begin{proof}
First observe that $S_m^{\pm}$ preserves the interval $[0,2]$ and that the possible initial values are $\{0,z-,1,z+,2\}\subset[0,2]$.  It follows that all $z_m\in[0,2]$.  Moreover $S_m^{-}$ is contractive on $[0,2]$ and strictly contractive on $[0,2)$; in fact it is also strictly contractive on $[0,2]$ if $|\cos2\beta_m|<1$.   We consider only $m>n$, so need only look at iteration of $S_m^-$.  Using the fact that $\beta_m=o(4^{-m})$ we find that $|1-\cos 2\beta_m|=o(4^{-2m})$.  It is then easily checked that the contractive fixed point of $S_m^-$ is also $o(4^{-2m})$, and that the derivative there is $\frac14$ up to a factor in the interval $(1-4^{-m},1+4^{-m})$.  It follows that, for sufficiently large $m$, $z_m$ is close enough to $0$ that $4^{k}\bigl|S_{m+k}^-\circ\dotsm\circ S_{m+1}^-(z_m)\bigr|$ is within an interval of length a bounded multiple of $\Bigl( \Pi_m^{m+k} (1-4^{-j}),\Pi_m^{m+k}(1+4^{-j})\Bigr)$. Sending $k\to\infty$ we see that this interval may still be made arbitrarily small by taking $m$ sufficiently large, from which the result follows.
\end{proof}

\begin{rmk}
\end{rmk}In fact, the composition sequence $4^{m-n} S_m^-\circ\dotsm\circ S_{n+1}^-$ converges uniformly on a disc around $0\in\mathbb{C}$ to an analytic function $\Psi_n$ with $\Psi_n(0)=0$ and having derivative $\Psi_n'(0)=\prod_{n+1}^\infty \frac{\sqrt{2}\cos2\beta_m}{\sqrt{1+\cos2\beta_m}}$.  The $\beta_m$ are $o(4^{-m})$, so the product converges and  if there is no $m>n$ with $\cos 2\beta_m=0$ then $\Psi_n$ is invertible on a neighborhood of $0$; in particular the latter is true for all sufficiently large $n$.

\begin{lem}\label{lem:cvgeoff_m}
Suppose $z_m$ is an admissible sequence for $m\geq m_0$ and $f_{m_0}$ is an eigenfunction of $\Mag^{a_{m_0}}_{m_0}$ with eigenvalue $z_{m_0}$.  For $m> m_0$ define $f_m$ inductively by applying~\eqref{eqn:efnextension} to $f_{m-1}$, so $\Mag^{a_m}_m f_m=z_m f_m$. Then $f_m$ converges uniformly on $V_\ast$ to a continuous function $f$. 
\end{lem}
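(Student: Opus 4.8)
The plan is to first extract the pointwise limit of the $f_m$ from the structure of the extension map~\eqref{eqn:efnextension}, and then upgrade this to uniform convergence and continuity by a telescoping estimate carried out in local gauges. I begin by recording the consistency of the construction. In the block decomposition of $\Mag^{a_m}_m$ used in Theorem~\ref{thm:Specsimonelevel}, $\U_0$ is the space of functions on $V_{m-1}\setminus V_0$ and $\U_1$ that of functions on $V_m\setminus V_{m-1}$; since $X=P_1 M P_0$ maps $\U_0$ into $\U_1$ and $(Q-z_m)^{-1}$ preserves $\U_1$, the operator $(Q-z_m)^{-1}X$ maps $\U_0$ into $\U_1$, so~\eqref{eqn:efnextension} gives $f_m=f_{m-1}$ on $V_{m-1}$ and assigns genuinely new values only on $V_m\setminus V_{m-1}$. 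Hence for each $x\in V_\ast$ the sequence $f_m(x)$ is eventually constant and the pointwise limit $f(x)=\lim_m f_m(x)$ exists on $V_\ast$ (this is the convergence called immediate after Corollary~\ref{cor:Magamspectrum}). Writing the eigenvalue equation $\Mag^{a_m}_m f_m=z_m f_m$ at an interior vertex $x\in V_m\setminus V_{m-1}$, whose two neighbours $y_1,y_2$ lie in $V_{m-1}$ and satisfy $\deg_m(x)=2$, yields the explicit rule
\[
	f_m(x)=\frac{1}{2(1-z_m)}\bigl(f_{m-1}(y_1)e^{ia_m(e_{xy_1})}+f_{m-1}(y_2)e^{ia_m(e_{xy_2})}\bigr).
\]

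Since $z_m$ is admissible, Lemma~\ref{lem:admissseq} and its proof give $z_m\in[0,2]$ with $z_m=O(4^{-m})$, so $\sum_m|z_m|<\infty$ and $\prod_m|1-z_m|^{-1}<\infty$; combined with the bound $|f_m(x)|\le|1-z_m|^{-1}\|f_{m-1}\|_\infty$ coming from the displayed rule, this produces a uniform bound $\|f_m\|_\infty\le C$. To pass to uniform convergence I extend each $f_m$ to a continuous function $\bar f_m$ on $X$ that is magnetic harmonic at scale $m$: on each cell $F_w(X)$ with $|w|=m$ I set $\bar f_m=e^{-iA_w}h_w$, where $A_w$ is a local gauge with $\partial A_w=a_m$ on $F_w(X)$ and $h_w$ is the ordinary harmonic extension of $e^{iA_w}f_m|_{F_w(V_0)}$. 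Each $\bar f_m$ equals $f_m$ on $V_m$ and is continuous, so it suffices to prove $\sum_m\|\bar f_{m+1}-\bar f_m\|_{\infty,X}<\infty$; the uniform limit is then a continuous $f$ whose restriction to $V_\ast$ is the pointwise limit above.

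To estimate a single telescoping term I use the cell decomposition~\eqref{eqn:celldecompofMagam4}: within each $m$-cell the refinement from scale $m$ to scale $m+1$ is, after conjugation by the common coarse gauge $T_{A_w}$, precisely the extension associated with $\Mag^{\beta_{m+1}k}_1$ of Example~\ref{eg:simplefield}, whose symmetric gauge carries only the small phase $\beta_{m+1}$ on each edge. Because $\bar f_m$ and $\bar f_{m+1}$ share this coarse gauge, the (a priori large) factor $e^{-iA_w}$ cancels in the difference, and at the newly added vertices of $V_{m+1}\setminus V_m$ the two functions differ only through the replacement of the ordinary harmonic average by the eigenvalue-$z_{m+1}$ extension for the flux $4\beta_{m+1}$; this gives a discrepancy of size $O\bigl((|z_{m+1}|+\beta_{m+1})\|f_m\|_\infty\bigr)$. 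A maximum principle applied in the common local gauge, in which $|\,\cdot\,|$ is preserved, then bounds $\|\bar f_{m+1}-\bar f_m\|_{\infty,X}$ by the same quantity. Since $z_m=O(4^{-m})$ and $\beta_m=o(4^{-m})$, as noted after Lemma~\ref{lem:admissseq}, we have $\sum_m(|z_m|+\beta_m)<\infty$ and the telescoping series converges, completing the argument.

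The delicate point, and the step I expect to require the most care, is exactly this gauge bookkeeping. The phases $a_m(e_{xy})$ in the midpoint rule are not themselves small, since they carry the accumulated coarse-scale gauge, so continuity cannot be read off directly from that formula; indeed in an arbitrary gauge the nearest-neighbour differences need not contract under refinement. What rescues the argument is that in the telescoping difference $\bar f_{m+1}-\bar f_m$ the entire coarse gauge is common to both terms and cancels, leaving only the genuinely small contributions of the new flux $4\beta_{m+1}$ and the eigenvalue $z_{m+1}$. Making this cancellation precise, i.e. estimating in the shared local gauge rather than globally and matching the two local gauges on the overlapping structure, is the crux of the proof.
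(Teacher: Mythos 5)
Your argument is correct and rests on the same two facts as the paper's proof: the explicit one-step extension rule at the vertices of $V_m\setminus V_{m-1}$ coming from Example~\ref{eg:simplefield}, and the bounds $z_m=O(4^{-m})$, $\beta_m=o(4^{-m})$ from Lemma~\ref{lem:admissseq} and the remark following it. The packaging is different, though. The paper reads off directly from the midpoint rule that the new value is the average of the two neighbours up to phase and amplitude errors of size $O(4^{-m})+O(\beta_m)$, so that the difference of $f_m$ across a scale-$m$ edge decays like $2^{-m}$, and concludes by chaining edges of successive scales; you instead build continuous, locally gauged magnetic-harmonic extensions $\bar f_m$ and telescope in sup norm with increments $O\bigl((|z_{m+1}|+\beta_{m+1})\|f_m\|_\infty\bigr)$, which has the advantage of producing the continuous limit on all of $X$ in one stroke. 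Your gauge bookkeeping does work: taking $A_{wj}=A_w+B_{wj}$ with $B_{wj}$ of oscillation at most $\beta_{m+1}$, the common factor $e^{-iA_w}$ cancels in $\bar f_{m+1}-\bar f_m$ and the maximum principle for harmonic functions on a cell gives your bound. However, the motivation you give for this extra machinery --- that the phases $a_m(e_{xy})$ ``are not themselves small'' because they carry the accumulated coarse gauge --- is not actually a difficulty here. For a field with flux depending only on the scale one has $\bigl\|a_m\mathds{1}_{F_w(X)}\bigr\|_\Hil^2=4^{-m}\|a_m\|_\Hil^2$ for $|w|=m$, and since a scale-$m$ edge is the boundary of an $m$-cell on which the local potential is harmonic, $|a_m(e_{xy})|=\bigl\|a_m\mathds{1}_{F_w(X)}\bigr\|_\Hil\leq 2^{-m}\|a\|_\Hil$. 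So the coarse gauge contributes only a summable $O(2^{-m})$ phase correction to the paper's formula $f_m(x)=\frac{1}{2(1-z_m)}(e^{i\beta_m}f(y_1)+e^{-i\beta_m}f(y_2))$, and the direct edge-difference argument is already legitimate; your route is sound but uses more apparatus than the situation requires.
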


\begin{proof}
Using the explicit matrices given in Example~\ref{eg:simplefield} we find that if $x\in V_m\setminus V_{m-1}$ has neighbors $y_1,y_2\in V_{m-1}$ then $f_m(x) = \frac{1}{2(1-z_m)}(e^{i\beta_m}f(y_1)+e^{-i\beta_m}f(y_2))$. From Lemma~\ref{lem:admissseq} we know $z_m=O(4^{-m})$, and since $\beta_m=o(4^{-m})$ we conclude that the difference of $f_m$ across an edge of scale~$m$ is $O(2^{-m})$. The result follows.
\end{proof}

\begin{thm}\label{thm:spectofMaga}
If $z_{m}$, $m\geq m_0$ is an admissible sequence and $f_m$ is the corresponding sequence of eigenfunctions of $\Mag^{a_m}_m$ let $f$ be the continuous extension of $\lim_m f_m$ from $V_\ast$ to $X$.  Then $f$ is an eigenfunction of $\Mag^a$ with eigenvalue $z=\lim_m 4^m z_m$.  Conversely, if $f$ is an eigenfunction of $\Mag^a$ with eigenvalue $z$ then there is $m_0$ and a sequence $z_m$ such that $f$ and $z$ are obtained in this manner.
\end{thm}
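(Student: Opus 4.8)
The plan is to prove the two implications separately, treating the forward direction as a short deduction from the convergence machinery of Section~\ref{sec:approx} and concentrating the real work on the converse.

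For the forward direction, suppose $z_m$ is admissible and $f_m$ are the associated eigenfunctions. The extension~\eqref{eqn:efnextension} never alters values on $V_{m-1}$, so the $f_m$ are coherent, $f_m|_{V_{m-1}}=f_{m-1}$, and the limit $f$ therefore satisfies $f|_{V_m}=f_m$ on $V_*$. Since $\Mag^{a_m}_m f_m=z_m f_m$ at every point of $V_m\setminus V_0$, for fixed $x\in V_*\setminus V_0$ and $m$ large enough that $x\in V_m$ we get $4^m\Mag^{a_m}_m f(x)=4^m z_m f(x)$. By Lemma~\ref{lem:admissseq} $4^m z_m\to z$, and as $f$ is bounded this gives $4^m\Mag^{a_m}_m f\to zf$ uniformly on $V_*\setminus V_0$. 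Lemma~\ref{lem:cvgeoff_m} identifies $f$ as the continuous extension, and Theorem~\ref{thm:cvgeofmagops} then yields $f\in\dom(\Mag^a)$ with $\Mag^a f=zf$.

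For the converse, fix an eigenfunction $\Mag^a f=zf$. Since a field depending only on scale meets the hypothesis of Theorem~\ref{thm:findomMaga} by Lemma~\ref{lem:fieldgoestozerofast}, we already have $4^m\Mag^{a_m}_m f\to zf$ uniformly on $V_*$. The heart of the argument is to upgrade this asymptotic statement to the exact claim that $f|_{V_m}$ is a genuine eigenfunction of $\Mag^{a_m}_m$ for all large $m$. I would derive this from self-similarity of the magnetic form: the cellular decomposition of $\Hil$ underlying~\eqref{eqn:DFamgauge} gives $\DF^a(h)=\sum_{j=1}^4\DF^{a^{(1)}}\bigl((e^{iA_j}h)\circ F_j\bigr)$, where $a^{(1)}$ is the field with fluxes $\beta_2,\beta_3,\dots$, and iterating and polarizing shows that on each $m$-cell $F_w(X)$ the gauged pull-back $G_w=(e^{iA_w}f)\circ F_w$ is an eigenfunction of $\Mag^{a^{(m)}}$ with eigenvalue $4^{-m}z$. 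Testing the eigenequation at $x\in V_m\setminus V_0$ against the gauged harmonic spline $\psi_m^x=e^{-iA_m^x}h_m^x$ and decomposing over the cells meeting $x$, the volume terms $\int G_w h_0^{y_w}\,d\mu$ cancel against the right-hand side, leaving the Kirchhoff-type matching $\sum_{w\ni x} d^{a^{(m)}}G_w(y_w)=0$.

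It then remains to identify this matching with the discrete eigenequation. Each normal derivative $d^{a^{(m)}}G_w(y_w)$ is, by the Schur-complement computation of Example~\ref{eg:simplefield} carried out on the cell, an affine function of the boundary values $G_w|_{V_0}$ whose coefficients depend only on the common eigenvalue $4^{-m}z$; summing the contributions of the cells meeting at $x$ reassembles $\Mag^{a_m}_m(f|_{V_m})(x)$ together with a multiple of $f(x)$ whose coefficient is the same at every $x$, so the matching condition reads $\Mag^{a_m}_m(f|_{V_m})=z_m(f|_{V_m})$ with a single scalar $z_m$. With $f_m:=f|_{V_m}$ now a graph eigenfunction, Corollary~\ref{cor:spectralmultiplicities} forces $z_{m-1}=R_m(z_m)$, i.e. $z_m\in\{S_m^+(z_{m-1}),S_m^-(z_{m-1})\}$; since $4^m z_m\to z$ is finite we must have $z_m\to0$, and as the $+$ branch keeps $z_m$ near $2$ this rules out all but finitely many $+$ branches, so $p_m=-$ for $m$ beyond some $n$ and the sequence is admissible. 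Taking $m_0>n$ and noting that the $f_m$ are exactly the iterates of~\eqref{eqn:efnextension} from $f_{m_0}$, we recover $f$ and $z=\lim 4^m z_m$ as in the forward construction, while the invertibility of $\Psi_n$ from the Remark confirms the tail is determined uniquely. The main obstacle is precisely the exactness claim that $f|_{V_m}$ is literally an eigenfunction: the convergence theorems give this only in the limit, and closing the gap requires the self-similar rigidity of $z$-eigenfunctions cell by cell rather than any soft limiting argument.
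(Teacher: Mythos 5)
Your forward direction is correct and is essentially the paper's argument: coherence of the $f_m$ under~\eqref{eqn:efnextension}, convergence of $4^mz_m$ from Lemma~\ref{lem:admissseq}, uniform convergence of $f_m$ from Lemma~\ref{lem:cvgeoff_m}, and then Theorem~\ref{thm:cvgeofmagops}. The converse is where the proposal breaks down. The pivotal unproved claim is that each magnetic normal derivative $d^{a^{(m)}}G_w(y_w)$ of the \emph{continuum} local eigenfunction $G_w=(e^{iA_w}f)\circ F_w$ is given, as an affine function of the boundary values $G_w|_{V_0}$, ``by the Schur-complement computation of Example~\ref{eg:simplefield}.'' The Schur complement in that example is a purely discrete object: it is computed from the scale-one graph operator $\Mag^{\beta k}_1$ and encodes the algebra of extending a function from $V_0$ to $V_1$. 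What you actually need is the Dirichlet-to-Neumann map at energy $4^{-m}z$ for the continuum operator $\Mag^{a^{(m)}}$ on a cell, which depends on the entire tail field $\beta_{m+1},\beta_{m+2},\dots$ and not only on the data entering Example~\ref{eg:simplefield}. Showing that this continuum map coincides (up to the single scalar you need) with the graph Schur complement is essentially equivalent to the exact decimation statement you are trying to prove, so the argument is circular at its central step. Without it, the identity $\Mag^{a_m}_m(f|_{V_m})=z_m(f|_{V_m})$, the relation $z_{m-1}=R_m(z_m)$, and the admissibility of the resulting sequence all remain unestablished. (Secondary issues: you would also need to justify that $G_w\in\dom(\Mag^{a^{(m)}})$ so that the normal derivatives exist and the Gauss--Green cancellation against $\psi_m^x$ is legitimate, and to rule out the exceptional values $z_m=1$ and $\cos 2\beta_m=0$ in the branch analysis.)

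The paper avoids this entirely by proving the converse as a completeness statement: it fixes an eigenfunction $g$ of $\Mag^a$ with eigenvalue $\lambda$ and shows that the eigenfunctions produced by spectral decimation with eigenvalues in $[-Z,0]$, $Z>\lambda/\epsilon$, capture $g$ up to error $3\epsilon$ in $L^2(\mu)$. The technical work is a two-space comparison: inner products in $L^2(\mu)$ are traded for inner products in $l^2(\tilde\mu_k)$ via the form convergence of Theorem~\ref{thm:cvgeofgraphapprox} and the eigenvalue identities, completeness of the eigenfunctions of $\Mag^{a_k}_k$ in $l^2(\tilde\mu_k)$ is invoked, and the high-eigenvalue remainder is bounded by $Z^{-2}\|\Mag^{a_k}_kg\|_{l^2(\tilde\mu_k)}^2\to\lambda^2/Z^2<\epsilon^2$ using Theorem~\ref{thm:findomMaga} and Lemma~\ref{lem:fieldgoestozerofast}. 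This soft argument yields $\lambda\in\Sigma$ without ever asserting that $f|_{V_m}$ is exactly a graph eigenfunction. If you want to salvage your direct approach, you would need to prove the continuum Dirichlet-to-Neumann identification as a separate lemma; as written, the gap is genuine.
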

\begin{proof}
Apply  Lemmas~\ref{lem:admissseq} and~\ref{lem:cvgeoff_m} to find that $4^m z_m$ converges and $f_m$ converges uniformly to $f$ on $V_\ast$.   Since $4^m\Mag^{a_m}_m f_m = 4^m z_m f_m$ one direction of the result follows by  Theorem~\ref{thm:cvgeofmagops}. The converse is a little more subtle; we proceed by proving that the eigenfunctions constructed as above are dense in $L^2(\mu)$.

Fix $0<\epsilon<1$, an eigenfunction $g$ of $\Mag^a$ with eigenvalue $\lambda$ and unit norm in $L^2(\mu)$, and a constant $Z>\lambda/\epsilon$.  Let $\Sigma$ denote the set of eigenvalues of $\Mag^a$ obtained by the spectral decimation procedure described above, let $\Sigma_Z=\Sigma\cap[-Z,0]$ and for $\sigma\in\Sigma$ let $\{\psi_{\sigma,j}\}_{j=1}^{J_{\sigma}}$ be an orthonormal basis for the corresponding eigenspace.  We show that 
\begin{equation}\label{eqn:spectofMaga0}
	\Bigl\| g - \sum_{\sigma\in\Sigma_Z} \sum_{j=1}^{J_\sigma}  \langle g,\psi_{\sigma,j} \rangle_{L^2(\mu)}\psi_{\sigma,j} \Bigr\|_{L^2(\mu)} < 3\epsilon
	\end{equation}
establishing that $\lambda\in\Sigma$.  The main difficulty in the proof is that the estimates take place in two spaces, neither of which is contained in the other.

It will be convenient for us to write $\tilde{\mu}_n=4^{-n}\mu_n$ to eliminate some factors of $4^n$.  In particular, if $\sigma\in\Sigma$ then it is a limit along an admissible sequnce, so there is $\sigma_n\to\sigma$ such that $\Mag^{a_n}_n \psi_{\sigma,j}=4^{-n}\sigma_{n} \psi_{\sigma,j}$ on the set $V_n$.  Moreover we may take $n$ so large that if $k>n$ then for all $\sigma\in\Sigma_Z$
\begin{gather}
	\bigl|\DF^{a_k}_k(\psi_{\sigma,j},g)-\DF^a(\psi_{\sigma,j},g)\bigr|<\epsilon, \label{eqn:spectoMaga1}\\
	\bigl|\DF^{a_k}_k(\psi_{\sigma,j},\psi_{\sigma,j'})-\DF^a(\psi_{\sigma,j},\psi_{\sigma,j'})\bigr|<\epsilon,  \label{eqn:spectoMaga2}\\
	\Bigl| \frac{\sigma_k}{\sigma} -1 \Bigr|<\epsilon,  \label{eqn:spectoMaga3}
	\end{gather}
where~\eqref{eqn:spectoMaga1} and~\eqref{eqn:spectoMaga2} are from Theorem~\ref{thm:cvgeofgraphapprox} and~\eqref{eqn:spectoMaga3} is from Lemma~\ref{lem:admissseq}.  From~\eqref{eqn:spectoMaga1} and~\eqref{eqn:spectoMaga3} we compute that as $k\to\infty$
\begin{align}
	\lefteqn{\bigl| \langle \psi_{\sigma,j},g \rangle_{l^2(\tilde{\mu}_k)} - \langle \psi_{\sigma,j},g \rangle_{L^2(\mu)} \bigr|} \quad& \notag\\
	&= \Bigl| \frac{1}{\sigma_k} \DF^{a_k}_k (\psi_{\sigma,j},g) - \frac1\sigma \DF^a ( \psi_{\sigma,j},g ) \Bigr|\notag \\
	&\leq \frac{1}{|\sigma|} \bigl| \DF^{a_k}_k(\psi_{\sigma,j},g) - \DF^a(\psi_{\sigma,j},g) \bigr| + \frac{1}{|\sigma|} \bigl|\DF^{a_k}_k(\psi_{\sigma,j},g) \bigr|\Bigl| \frac{\sigma}{\sigma_{k}} -1 \Bigr|\notag\\
	&\to  0, \label{eqn:spectoMaga4}
	\end{align}
and using~\eqref{eqn:spectoMaga2} and~\eqref{eqn:spectoMaga3} in the same manner shows that also
\begin{equation} \label{eqn:spectoMaga5}
	\bigl| \langle \psi_{\sigma,j},\psi_{\sigma',j'} \rangle_{l^2(\tilde{\mu}_k)} - \langle \psi_{\sigma,j},\psi_{\sigma',j'} \rangle_{L^2(\mu)} \bigr| \to 0
	\end{equation}
as $k\to\infty$.  Both limits are uniform for $\sigma,\sigma'\in\Sigma_Z$.  A similar argument shows $\|g\|_{l^2(\tilde\mu_k)}\to \|g\|_{L^2(\mu)}$.

The quantity~\eqref{eqn:spectofMaga0} may now be estimated using~\eqref{eqn:spectoMaga4} and~\eqref{eqn:spectoMaga5}.  Using orthonormality of the $\psi_{\sigma,j}$ in $L^2(\mu)$ and~\eqref{eqn:spectoMaga4} we may take $n$ so for $k>n$
\begin{align*}
	\Bigl\| g - \sum_{\sigma\in\Sigma_Z} \sum_{j=1}^{J_\sigma}  \langle g,\psi_{\sigma,j} \rangle_{L^2(\mu)}\psi_{\sigma,j} \Bigr\|_{L^2(\mu)} 
	&= \|g\|_{L^2(\mu)}^2 - \sum_{\sigma\in\Sigma_Z} \sum_{j=1}^{J_\sigma} \bigl| \langle g,\psi_{\sigma,j} \rangle_{L^2(\mu)}\bigr|^2 \\
	&\leq \|g\|_{l^2(\tilde{\mu}_k)} - \sum_{\sigma\in\Sigma_Z} \sum_{j=1}^{J_\sigma} \bigl| \langle g,\psi_{\sigma,j} \rangle_{l^2(\tilde{\mu}_k)}\bigr|^2 + \epsilon
	\end{align*}
Moreover if $Q_k$ is the matrix with entries $\langle \psi_{\sigma,j},\psi_{\sigma',j'} \rangle_{l^2(\tilde{\mu}_k)}$ (for $\sigma,\sigma'\in\Sigma_Z$ and all relevant $j,j'$ for each $\sigma,\sigma'$) then by~\eqref{eqn:spectoMaga5} $Q_k$ converges to the identity.  Since $\tilde\psi_{\sigma,j}=\sum_{\sigma',j'} Q_k \psi_{\sigma',j'}$ is an orthonormal set in $l^2(\tilde\mu_k)$ we use both facts to conclude that for large enough $k$
\begin{align}
	\Bigl\| g - \sum_{\sigma\in\Sigma_Z} \sum_{j=1}^{J_\sigma}  \langle g,\psi_{\sigma,j} \rangle_{L^2(\mu)}\psi_{\sigma,j} \Bigr\|_{L^2(\mu)} 
	&\leq \|g\|_{l^2(\tilde{\mu}_k)} - \sum_{\sigma\in\Sigma_Z} \sum_{j=1}^{J_\sigma} \bigl| \langle g,\tilde\psi_{\sigma,j} \rangle_{l^2(\tilde{\mu}_k)}\bigr|^2 + 2\epsilon \notag\\
	&= \Bigl\| g - \sum_{\sigma\in\Sigma_Z} \sum_{j=1}^{J_\sigma} \langle g,\tilde\psi_{\sigma,j} \rangle_{l^2(\tilde{\mu}_k)}\tilde\psi_{\sigma,j} \Bigr\|_{l^2(\tilde\mu_k)}^2  +2\epsilon \label{eqn:spectoMaga6}
	\end{align}

At this juncture we recall that $\tilde\psi_{\sigma,j}$ is an eigenfunction of $\Mag^{a_k}_k$ with eigenvalue $\sigma_k$ when treated an element of $l^2(\tilde\mu_k)$. Since  the eigenfunctions of $\Mag^{a_k}_k$ are complete in $l^2(\tilde\mu_k)$ the expression~\eqref{eqn:spectoMaga6} is the projection onto those eigenfunctions of $\Mag^{a_k}_k$ for which the corresponding eigenvalue gives rise to elements of $\Sigma$ of size larger than $Z$.  However~\eqref{eqn:spectoMaga3} shows that any such eigenvalue must be larger than $Z/(1-\epsilon)$.  Using this and the observation
\begin{equation*}
	\langle g,\tilde\psi_{\sigma,j} \rangle_{l^2(\tilde{\mu}_k)}
	= \frac{1}{|\sigma_k|} \langle g, \Mag^{a_k}_k \tilde\psi_{\sigma,j} \rangle_{l^2(\tilde{\mu}_k)} 
	= \frac{1}{|\sigma_k|} \langle  \Mag^{a_k}_k  g,\tilde\psi_{\sigma,j} \rangle_{l^2(\tilde{\mu}_k)}
	\end{equation*}
we have at last
\begin{align*}
	\Bigl\| g - \sum_{\sigma\in\Sigma_Z} \sum_{j=1}^{J_\sigma}  \langle g,\psi_{\sigma,j} \rangle_{L^2(\mu)}\psi_{\sigma,j} \Bigr\|_{L^2(\mu)} 
	&\leq 2\epsilon+ \sum_{|\sigma_k|>Z/(1-\epsilon)} \bigl| \langle g,\tilde\psi_{\sigma,j} \rangle_{l^2(\tilde{\mu}_k)} \bigr|^2 \\
	&= 2\epsilon+ \sum_{|\sigma_k|>Z/(1-\epsilon)} \frac{1}{|\sigma_k|^2} \bigl| \langle \Mag^{a_k}_k g,\tilde\psi_{\sigma,j} \rangle_{l^2(\tilde{\mu}_k)} \bigr|^2 \\
	&\leq 2\epsilon+ \frac{(1-\epsilon)^2}{Z^2} \bigl\| \Mag^{a_k}_k g \bigr\|_{l^2(\tilde\mu_k)}^2
	\end{align*}
however $\bigl\| \Mag^{a_k}_k g \bigr\|_{l^2(\tilde\mu_k)}^2=\bigl\| 4^k\Mag^{a_k}_k g \bigr\|_{l^2(\mu_k)}^2$, and our field satisfies Lemma~\ref{lem:fieldgoestozerofast}, so in light of Theorem~\ref{thm:findomMaga} we have $\bigl\| \Mag^{a_k}_k g \bigr\|_{l^2(\tilde\mu)}^2\to \| \Mag^a g\|_{L^2(\mu)}=\lambda^2\|g\|_{L^2(\mu)}^2=\lambda^2$.  By assumption $\lambda<\epsilon Z$, so~\eqref{eqn:spectofMaga0} holds and the proof is complete.
\end{proof}

\section{Numerical results for a uniform field}\label{sec:numerics}

It is physically natural to consider the case when the magnetic field through the fractal is uniform, and therefore the flux through each cell is proportional to the area of the cell.  Of course, when $X$ is thought of as an abstract self-similar set there is no notion of the area of a cell, so we make the assumption that the area of a cell of scale~$m$ is $Cr^m$, for some constants $C$ and $0<r<1/4$, where the  latter restriction is based on the idea that there are four cells of scale $m+1$ in each cell of scale~$m$.   Note that for a smaller range of $r$ we presented an embedding of $X$  into $\mathbb{R}^2$ at the beginning of Section~\ref{sec:analysis} in which the area of each scale~$m$ cell is $2s^{m-1}$, where $0<s\leq 1/8$ is a fixed factor.

Our first task is to determine the values in the sequence $\beta_n$ used in Definition~\ref{def:fluxindepofscale} that correspond to a uniform field of the above type.  Observe that for a given~$m$ the flux through cells of scale~$m$ depends only on $\beta_n$ for $n\geq m$ because the contributions from the $n<m$ are gauge fields for cells of scale~$m$.  Since there are $4^{n-m}$ cells of scale $n$ in a cell of scale~$m$ and each contributes flux $4\beta_n$ the total flux through such a cell, assuming $\beta_n$ as in the statement of the lemma, is  $4\sum_{n=m}^\infty 4^{n-m}\beta_n =4\sum_0^\infty 4^n\beta_{m+n}$.  Evidently if $\beta_m=\beta r^m$ for $r<1/4$ then the flux through an $m$~cell is $Cr^m$ for $C=4\beta\sum_0^\infty (4r)^n$.  This is a special case of a field that depends only on the scale, so from Theorem~\ref{thm:Specsimonelevel}, equation~\eqref{eqn:RmapforDLF} and Corollary~\ref{cor:Magamspectrum} we should set
\begin{gather*}
	R_m= \frac{-2z^2+4z-1+\cos(2\beta r^m)}{\cos(2\beta r^m)}\\
	\mathcal{R}_{k,m} = R_k\circ \dotsm \circ R_m
	\end{gather*}
at which point the spectrum of $\Mag^{a_m}_m$ is $\{1\}\cup\Bigl(\cup_{k=2}^m\mathcal{R}_{k,m}^{-1}\{1\}\Bigr)$ with multiplicity $\frac43(4^k-1)$ for points in $\mathcal{R}_{k,m}^{-1}$ and $\mult_{m}(1)=\frac13(4^m+2)$.   Figure~\ref{fig:graphLapspect24}  shows the dependence of spectra of this type on the magnetic field strength $\beta$ when $r=.24$ is close to the limiting value of $1/4$, while Figure~\ref{fig:graphLapspect1} shows the same dependence when $r=0.1$.  Two levels of approximation ($m=5$ and $m=7$) are shown to emphasize the manner in which the graph spectral values accumulate on an attractor.

\begin{figure}[t!]
\begin{centering}
\includegraphics[viewport= 55 200 490 580,clip, width=6cm]{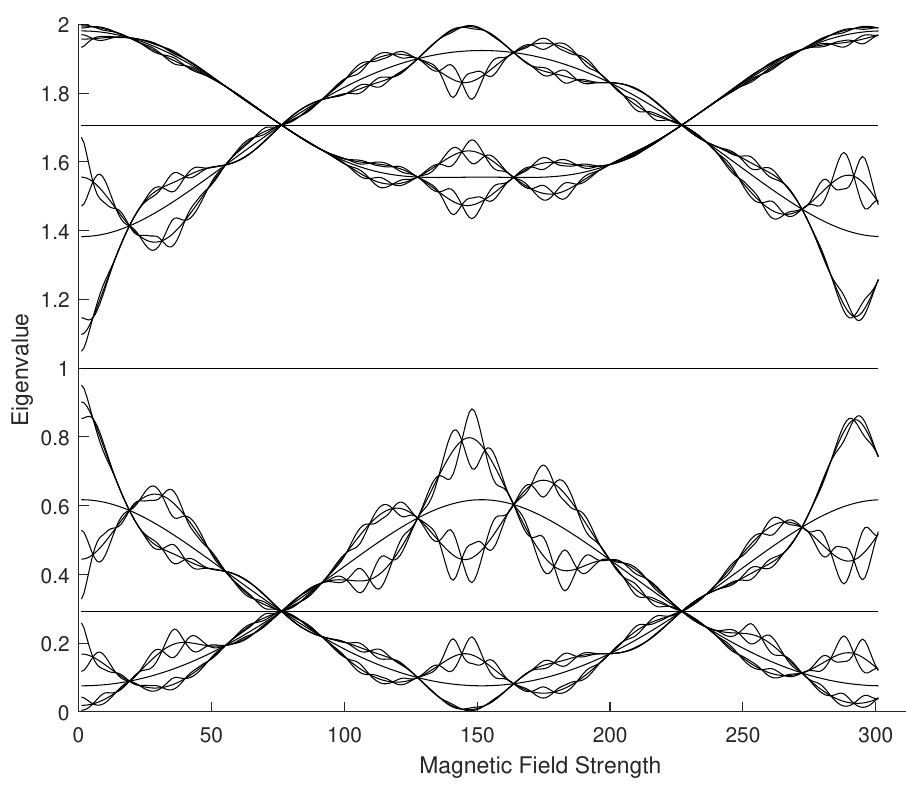}\hfill
\includegraphics[viewport= 55 200 490 580, clip, width=6cm]{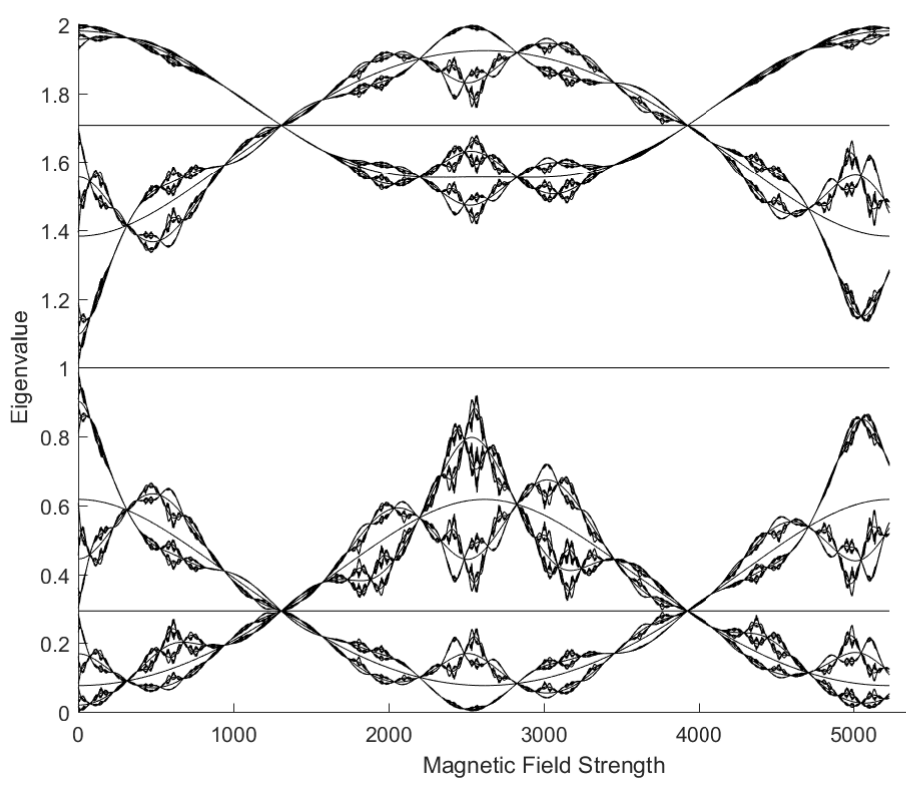}
\end{centering}
\caption{Spectral values versus magnetic field strength for $\Mag_m^{a_m}$ of level $m=5$ (left) and $m=7$ (right) when $r=0.24$.}\label{fig:graphLapspect24}
\end{figure}
\begin{figure}[t!]
\begin{centering}
\includegraphics[viewport= 55 180 560 600,clip, width=6cm]{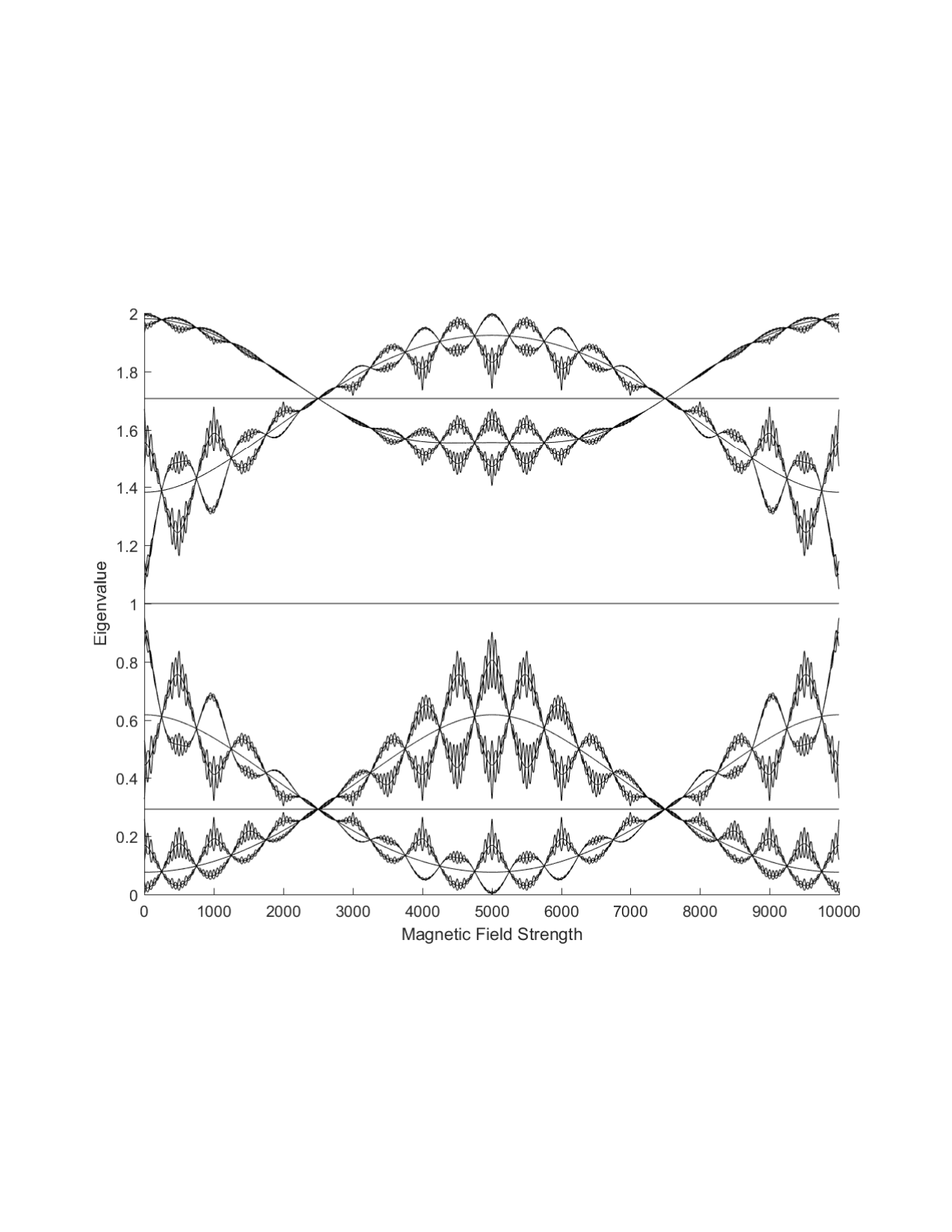} \hfill
\includegraphics[viewport=55 180 560 600,clip, width=6cm]{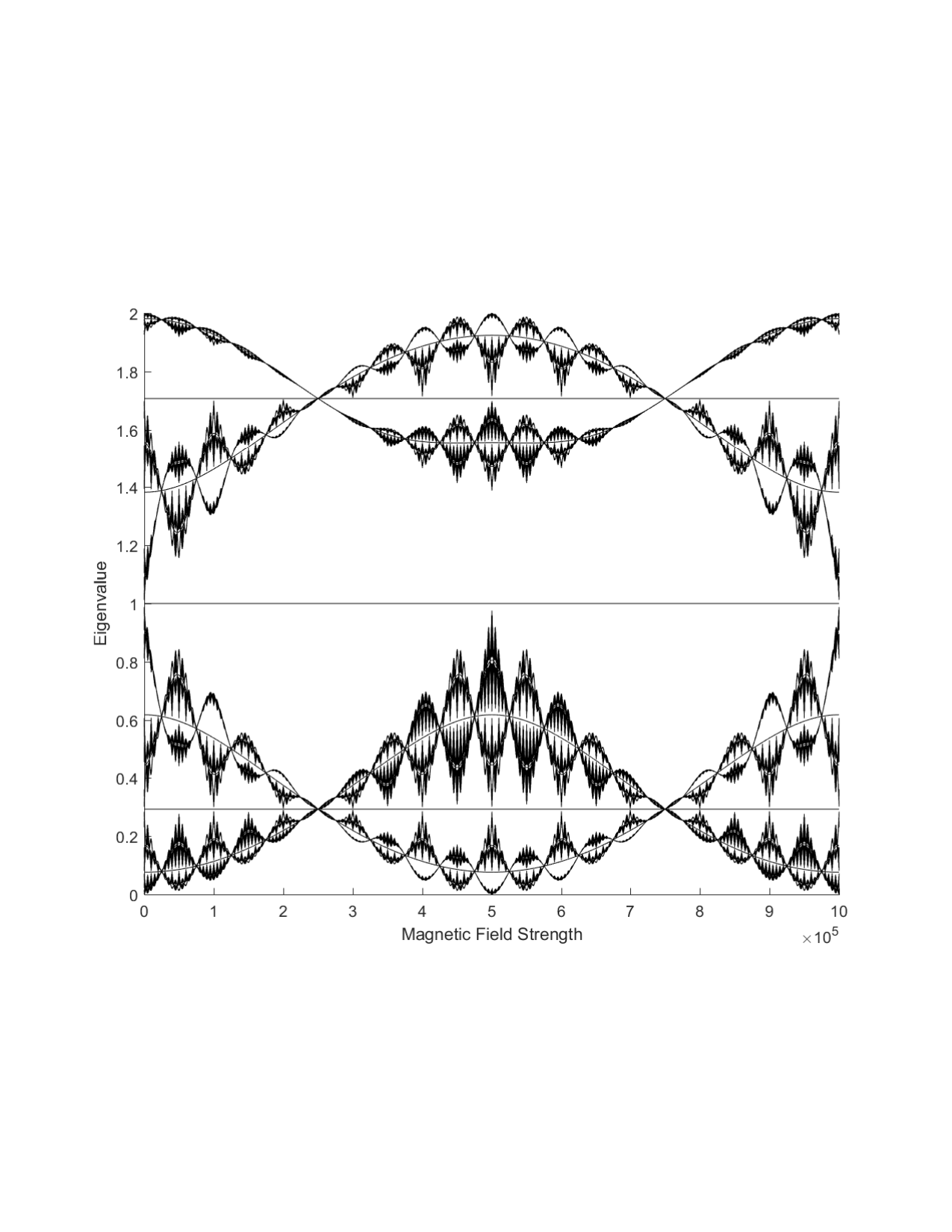}
\end{centering}
\caption{Spectral values versus magnetic field strength for $\Mag_m^{a_m}$ of level $5$ (left) and $7$ (right) when $r=0.1$.}\label{fig:graphLapspect1}
\end{figure}

According to Theorem~\ref{thm:spectofMaga} the spectrum of the corresponding magnetic operator $\Mag^a$ may be obtained from the spectra of $\Mag^{a_m}_m$ by taking a renormalized limit.  Numerical results show the first few eigenvalues in the spectrum of $\Mag^a$ are well approximated by taking quite small values of $m$. Figure~\ref{fig:fractalspectrum24} shows the dependence of the first $32$ eigenvalues of the fractal magnetic operator as a function of $\beta$ when computed using $m=11$; in this example $r=0.24$.  The same graph for $r=0.1$ is in Figure~\ref{fig:fractalspectrum1}.
\begin{figure}[!h]
\begin{centering}
\includegraphics[viewport= 50 200 555 555, clip,width=12cm]{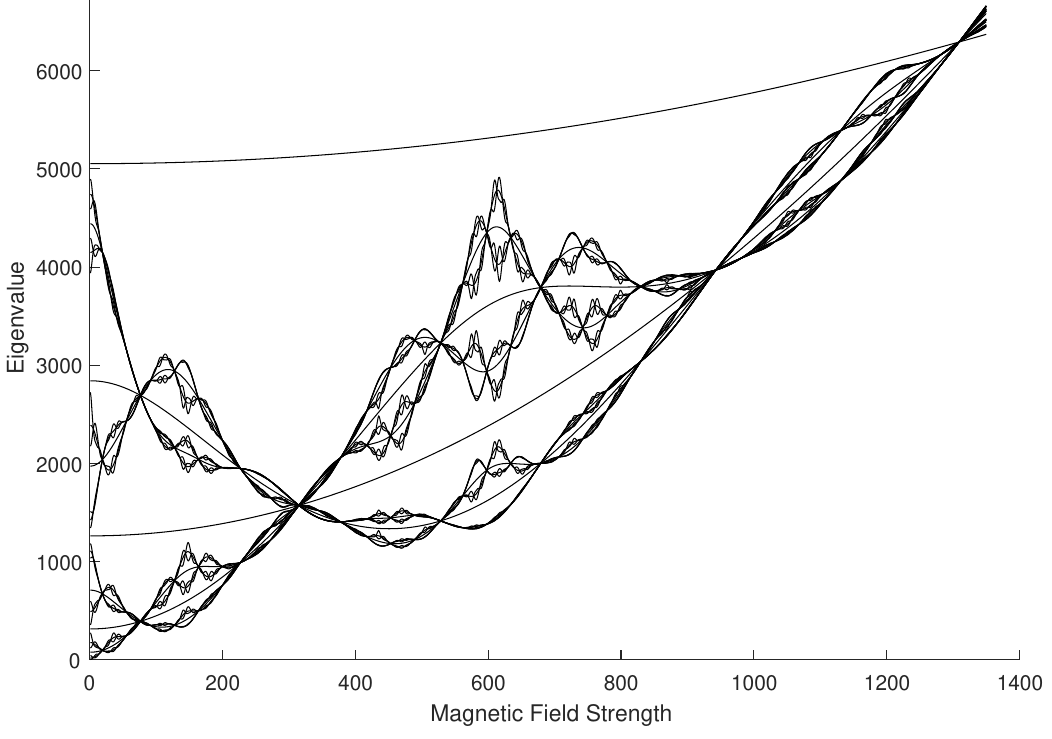}
\end{centering}
\caption{Spectral values vs. magnetic field strength for $\Mag^{a}$ with $r=0.24$.}\label{fig:fractalspectrum24}
\end{figure}
\begin{figure}[H]
\begin{centering}
\includegraphics[viewport= 50 200 555 545,clip, width=12cm]{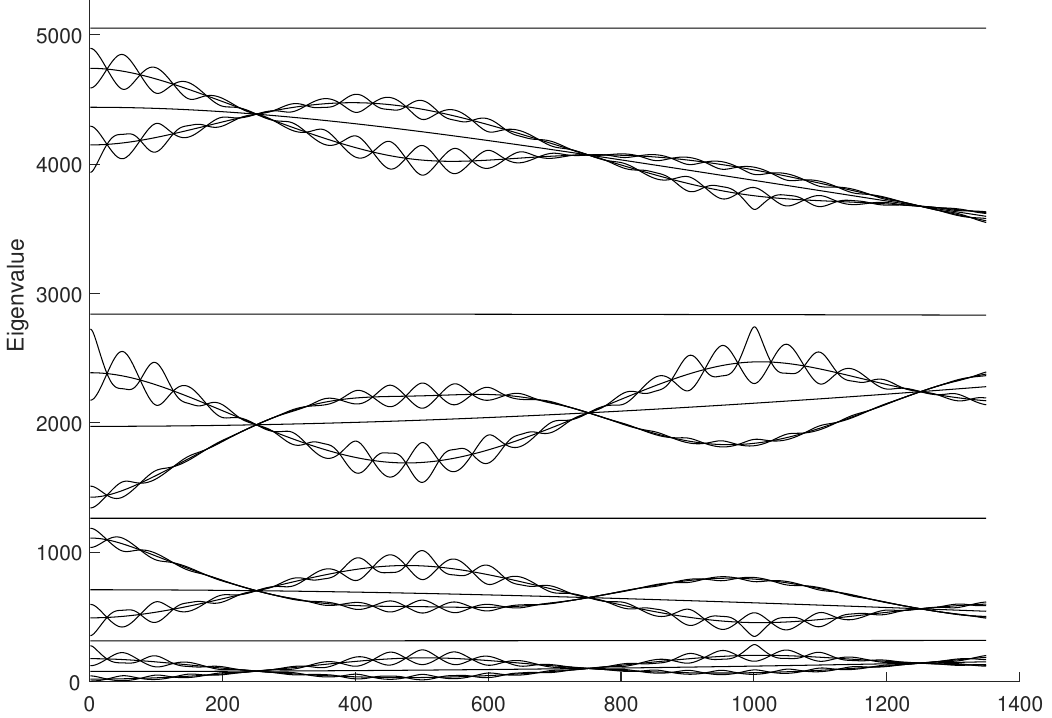}
\end{centering}
\caption{Spectral values vs. magnetic field strength for $\Mag^{a}$ with $r=0.1$.}\label{fig:fractalspectrum1}
\end{figure}

\newpage
\bibliography{DiamondMagSpecDec}
\bibliographystyle{amsplain}

\end{document}